\def\A{\mathcal{A}}
\def\B{\mathcal{B}}
\def\E{\mathcal{E}}
\def\D{\mathcal{D}}
\def\O{\mathbf{O}}
\def\cS{\mathcal{S}}
\def\Z{\mathbb{Z}}
\def\r{\mathrm{right}}
\DeclareMathOperator{\aut}{Aut}
\DeclareMathOperator{\cay}{Cay}
\DeclareMathOperator{\cyc}{Cyc}
\DeclareMathOperator{\id}{id}
\DeclareMathOperator{\iso}{Iso}
\DeclareMathOperator{\orb}{Orb}
\DeclareMathOperator{\rk}{rk}
\DeclareMathOperator{\Span}{Span}
\DeclareMathOperator{\sym}{Sym}
\DeclareMathOperator{\rad}{rad}
\DeclareMathOperator{\Sup}{Sup}
\DeclareMathOperator{\DCI}{DCI}
\DeclareMathOperator{\CI}{CI}
\DeclareMathOperator{\pr}{pr}
\def\@seccntformat#1{\csname the#1\endcsname. } 
\def\@biblabel#1{#1.}
\title[The group $C^4_p\times C_q$ is a $\DCI$-group]
{The group $\boldsymbol{C^4_p\times C_q}$ is a 
$\boldsymbol{\DCI}$-group}
\author{Istv\'an Kov\'acs}
\address{I.\ Kov\'acs 
\newline\indent
UP IAM, University of Primorska, Muzejski trg 2, 6000 Koper, Slovenia 
\newline\indent
UP FAMNIT, University of Primorska, Glagol\v jaska 8, 6000 Koper, Slovenia}
\email{istvan.kovacs@upr.si}
\author{Grigory Ryabov}
\address{G.\ Ryabov
\newline\indent
Sobolev Institute of Mathematics, Acad. Koptyug avenue 4, 630090, Novosibirsk, Russia 
\newline\indent  
Novosibirsk State University, Pirogova 1, 630090, Novosibirsk, Russia
\newline\indent
Mathematical Center in Akademgorodok, Acad. Koptyug avenue 4, 630090, Novosibirsk, Russia}
\email{gric2ryabov@gmail.com}
\thanks{This research work was supported by the Slovenian Research Agency (project no. BI-RU/19-20-032). 
I.\ Kov\'acs was also supported  by 
the Slovenian Research Agency (research program P1-0285 and research 
projects N1-0062, N1-0140, J1-9108 and J1-1695).
G.\ Ryabov was also supported by the Mathematical Center in Akademgorodok.}
\date{}
\newtheorem{lemm}{Lemma}[section]
\newtheorem{theo}[lemm]{Theorem}
\theoremstyle{remark}
\newtheorem{rem}[lemm]{Remark}
\begin{document}
\vspace{\baselineskip}
\vspace{\baselineskip}
\vspace{\baselineskip}
\vspace{\baselineskip}
\begin{abstract}
We prove that the group $C_p^4\times C_q$ is a $\DCI$-group for 
distinct primes $p$ and $q$, that is, two Cayley digraphs over $C_p^4 \times C_q$ are isomorphic if and only if their connection sets are conjugate by a group automorphism.
\\
\\
\textbf{Keywords}: isomorphism, $\DCI$-group, Schur ring.
\\ [+0.08in]
\textbf{MSC}: 05C25, 05C60, 20B25.
\end{abstract}

\maketitle

\section{Introduction}

Let $G$ be a finite group. The \emph{Cayley digraph} $\cay(G,S)$, where $S\subseteq G$, is defined to be the digraph whose vertex set is $G$ and arc set is $\{ (g,sg) : g\in G,\, s\in S\}$.
Two digraphs 
$\cay(G,S)$ and $\cay(G,T)$ are called \emph{Cayley isomorphic} if there exists $\varphi \in \aut(G)$ such that $S^{\varphi}=T$. It is clear that Cayley isomorphic Cayley digraphs are 
also isomorphic, however, the converse implication does not hold in general.  A subset $S \subseteq G$ is called a \emph{$\CI$-subset} if for every $T\subseteq G$, $\cay(G,S) \cong \cay(G,T)$ implies that $\cay(G,S)$ and $\cay(G,T)$ are Cayley isomorphic.  
A finite group $G$ is called a \emph{$\DCI$-group} (\emph{$\CI$-group}, respectively) if all of its subsets (all of its inverse-closed subsets, respectively) are $\CI$-subsets.

These definitions go back to \'Ad\'am~\cite{A}, who conjectured that all finite cyclic groups are $\DCI$-groups. The problem of determining all finite $\CI$-groups was raised by Babai and Frankl~\cite{BF}.
Accumulating the work of several mathematicians, Li et al.~\cite{LLP} 
reduced the candidates of $\CI$-groups to a restricted list.  
We refer to the survey paper \cite{Li} for most results on $\CI$- and $\DCI$-groups.

One of the crucial steps towards the classification of all $\DCI$-groups is to determine the abelian $\DCI$-groups. 
Cyclic $\DCI$-groups were classified by Muzychuk in~\cite{M1,M2}. He proved that the cyclic 
group $C_n$ of order $n$ is a $\DCI$-group if and only if $n \in \{k,2k,4k\}$, where $k$ is an odd square-free number. 
It is known that every Sylow $p$-subgroup of an abelian $\DCI$-group is either elementary abelian or isomorphic to $C_4$ (see \cite{BF,M1,M2}). Since the class of $\DCI$-groups is closed under taking subgroups, it is a natural question which elementary abelian groups are $\DCI$-groups. It was derived in several papers that such groups of rank at most $5$ are $\DCI$-groups. For a detailed account on these results concerning groups of rank up to $4$, we refer to \cite{Li}. 
An elementary proof that $C_p^4$ is a $\DCI$-group was given by Morris~\cite{M}, and the fact that $C_p^5$ is a 
$\DCI$-group was shown recently by Feng and Kov\'acs~\cite{FK}.
On the other hand, the following groups are not $\DCI$-groups: $C_2^6$ (Nowitz~\cite{Now}); $C_3^8$ (Spiga~\cite{Sp}); $C_p^r$, where $r\geq 2p+3$ (Somlai~\cite{Som}). 

Kov\'{a}cs and Muzychuk~\cite{KM} proved that the group $C_p^2\times C_q$ is a $\DCI$-group for every distinct primes $p$ and $q$. In the same paper they conjectured that 
the direct product of two $\DCI$-groups with coprime orders 
is a $\DCI$-group. The conjecture was disproved by Dobson~\cite{Dob1}. On the other hand,  Dobson~\cite{Dob2} also showed that the conjecture is true for abelian groups under strong restrictions on 
the order of the factors.
Recently, Muzychuk and Somlai~\cite{MS} proved that 
$C_p^3\times C_q$ is a $\DCI$-group for every distinct 
primes $p$ and $q$. 
Our goal in this paper is to extend this result and prove the following statement:

\begin{theo}\label{main}
The group $C_p^4\times C_q$ is a $\DCI$-group for distinct primes $p$ and $q$.
\end{theo}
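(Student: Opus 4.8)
The plan is to translate the $\DCI$-property into the language of Schur rings (S-rings) and apply Babai's criterion. Recall that a subset $S\subseteq G$ is a $\CI$-subset precisely when any two regular subgroups of $\aut(\cay(G,S))$ isomorphic to $G$ are conjugate in $\aut(\cay(G,S))$. Passing to the transitivity module of the point stabilizer, one obtains an S-ring $\A$ over $G$ having $S$ as a union of basic sets, and it suffices to show that every S-ring $\A$ over $G$ is a \emph{$\CI$-S-ring}, meaning that every pair of regular subgroups isomorphic to $G$ contained in $\aut(\A)$ is conjugate in $\aut(\A)$. In this way the problem is reduced to a finite structural statement about the S-rings over $G$, and the first task is to classify them.

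Set $P=C_p^4$ and $Q=C_q$, so that $G=P\times Q$. Since $p\ne q$, the Sylow subgroups $P$ and $Q$ are characteristic in $G$, and every S-ring $\A$ over $G$ restricts to S-rings $\A_P$ and $\A_Q$ over $P$ and $Q$. I would first dispose of the \emph{decomposable} S-rings. If $\A$ is a tensor product $\A_P\otimes\A_Q$, then coprimality forces any regular subgroup isomorphic to $G$ to split as a product of a regular $P$ in $\aut(\A_P)$ and a regular $Q$ in $\aut(\A_Q)$, so the conjugacy question reduces to the factors; this case closes because $P=C_p^4$ is a $\DCI$-group by Morris and $Q=C_q$ is a $\DCI$-group by Muzychuk. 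More generally, if $\A$ is a generalized wreath product with respect to a proper $\A$-subgroup, the $\CI$-property follows by induction from the fact that the sections involved are proper subgroups or quotients of $G$, all of which are known $\DCI$-groups: $C_p^k\times C_q$ for $k\le 3$ by Muzychuk--Somlai and Kov\'acs--Muzychuk, together with $C_p^4$ and $C_q$ above.

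The remaining, and essential, cases are the \emph{indecomposable} S-rings admitting no such decomposition. Here I would analyze the interaction between the $p$-part and the $q$-part. Because $Q$ is cyclic of prime order, $\A_Q$ is either the full group algebra or a cyclotomic S-ring determined by a subgroup of $\aut(C_q)\cong C_{q-1}$, and in either case $Q$ is an $\A$-subgroup of transparent structure; the nontrivial content therefore sits in $\A_P$, an S-ring over $C_p^4$. I would invoke the classification of S-rings over $C_p^4$ underlying Morris's proof and combine it with the way the $Q$-cosets are glued, as controlled by the action of $\aut(\A)$ on them. Indecomposability forces this gluing to be of an essentially normal (Cayley) type, and one then verifies directly, via the socle of $\aut(\A)$ and the regular representations it contains, that all regular subgroups isomorphic to $G$ form a single conjugacy class.

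The hard part will be the indecomposable S-rings in which $\A_P$ is a primitive or exceptional S-ring over $C_p^4$ — precisely the rank-$4$ phenomena, linked to subgroups of $\mathrm{GL}(4,p)$ and to affine or projective configurations over $\mathbb{F}_p$, that do not arise for ranks at most $3$ and that account for the jump in difficulty from $C_p^3\times C_q$. For these the conjugacy of the regular subgroups isomorphic to $G$ cannot be reduced to smaller sections and must be established by a careful case-by-case examination of the possible $\aut(\A)$, using the coprimality of $p$ and $q$ to constrain how a regular $C_q$ can interact with the elementary abelian $p$-part. Confirming this conjugacy in every exceptional configuration is the crux on which Theorem~\ref{main} rests.
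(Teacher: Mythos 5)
Your opening framework (Babai's criterion and the reduction to $\CI$-S-rings) is indeed the paper's starting point, but after that your argument has two genuine gaps, and it locates the difficulty in the wrong place. The central gap is your induction step for decomposable S-rings: it is \emph{not} true that a nontrivial generalized wreath product of two $\CI$-S-rings over sections of $G$ is automatically a $\CI$-S-ring. The known sufficient condition (Lemma~\ref{cigwr}, from \cite{KR}) requires, beyond the $\CI$-property of $\A_U$ and $\A_{G/L}$, the compatibility condition $\aut_S(\A_S)=\aut_U(\A_U)^{S}\, \aut_{G/L}(\A_{G/L})^S$ on the common section $S=U/L$, and verifying or circumventing this condition is precisely what most of the paper does (via Cayley minimality, cyclotomicity of decomposable $p$-S-rings, and $2$-minimality). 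Moreover, the hard case is not, as you claim, an indecomposable or ``primitive'' S-ring: primitive S-rings over $G$ have rank $2$ by Wielandt's theorem (Lemma~\ref{primitive}), and the indecomposable quotients are dispatched by $2$-minimality together with Lemma~\ref{minnorm}. The crux is a \emph{decomposable} configuration, namely $\A=\A_{\bar{P}Q}\wr_S\A_Q$ with $\A_S\cong \Z C_p\wr\Z C_p\wr\Z C_p$ --- the one section that fails to be Cayley minimal (Lemma~\ref{p3caymin}) --- where the compatibility condition cannot be obtained for free; this leads to the exceptional structure of Lemma~\ref{p3exceptcase} and the case analysis in Lemma~\ref{technical} and the proof of Theorem~\ref{main2}. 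Your proposal assumes this difficulty away.

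The second gap is your plan to ``invoke the classification of S-rings over $C_p^4$ underlying Morris's proof'': no such classification exists, and Morris's proof is elementary and supplies none. The paper avoids any classification over $C_p^4$ by restricting attention to $K\in\Sup_2^{\min}(G_\r)$ (Lemma~\ref{cimin}) --- not to all S-rings, as you propose, which is both unnecessary and infeasible --- so that the relevant quotients are $p$-S-rings (Lemma~\ref{minpring}); for $p$-S-rings over $C_p^4$ only the dichotomy ``decomposable $\Rightarrow$ cyclotomic'' (Lemma~\ref{p4decomp}) and ``indecomposable and schurian $\Rightarrow$ $2$-minimal'' (Lemma~\ref{p4minimal}) is needed, combined with the structure theory over groups of non-powerful order from \cite{MS,EKP} to produce the wreath decomposition of Lemma~\ref{step1}. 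A smaller but real inaccuracy: your reductions need the sections of $G$ to have all their schurian S-rings $\CI$ (as collected in Remark~\ref{cisrings}, citing Hirasaka--Muzychuk and others), not merely that the corresponding groups are $\DCI$-groups; the latter is a priori weaker, so citing Morris for $C_p^4$ and ``$C_q$ is $\DCI$'' for the factors does not support the inductive claims as stated.
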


We prove Theorem~\ref{main} using the so called S-ring approach, which was proposed by Klin and P\"{o}schel~\cite{KP}, and developed later by Hirasaka and Muzychuk~\cite{HM}. 
An \emph{S-ring} over a group $G$ is a subring of the group ring 
$\Z G$, which is a free $\Z$-module spanned by a special partition of $G$ (see Section~2 for the exact definition). The notion of an S-ring goes back to Schur~\cite{Schur}. To prove that a given group is a 
$\DCI$-group it is sufficient to show that every member of a certain family of S-rings over that group is a $\CI$-S-ring (see Section~3). 
In fact, we are going to show that the latter family of S-rings 
over $C_p^4 \times C_q$ consists of so called star and   
nontrivial generalized wreath products. Then, we use results on star and generalized wreath products derived in \cite{KM,KR} and also results on S-rings over groups of non-powerful order derived in \cite{MS} 
(a positive integer $n$ is called \emph{powerful} if $p^2$ divides $n$ for every prime divisor $p$ of $n$).
\medskip

The paper is organized as follows: We collect the basic 
definitions and facts from S-ring theory needed in this paper in 
Section~2.  $\CI$-S-rings are the subject of Section~3, where their relation with $\DCI$-groups is also discussed.  Here we prove a new sufficient condition for an S-ring to be a $\CI$-S-ring (see Lemma~\ref{minnorm}). In Section~4, we discuss the generalized wreath and star products of S-rings, and establish a sufficient condition for the generalized wreath product of two S-rings over elementary abelian groups to be cyclotomic (see Lemma~\ref{gwrcycl}). In Sections~5 and 6, we provide information on so called $p$-S-rings over elementary abelian $p$-groups and groups of non-powerful 
order, respectively. Finally, the proof of Theorem~\ref{main} will 
be presented in Section~7.

\subsection*{Notation.}
\medskip

Let $X\subseteq G$. The element $\sum_{x\in X} {x}$ of the group ring $\Z G$ is denoted by $\underline{X}$.

The set $\{x^{-1} : x\in X\}$ is denoted by $X^{-1}$.

The subgroup of $G$ generated by $X$ is denoted by 
$\langle X\rangle$; we also set $\rad(X)=\{g\in G:\ gX=Xg=X\}$.

The order of $g \in G$ is denoted by $o(g)$.

The set $\{ (g,xg): x \in  X,\, g\in G\}$ of arcs of the Cayley digraph 
$\cay(G,X)$ is denoted by $R(X)$.

The group of all permutations of a set $\Omega$ is denoted by 
$\sym(\Omega)$, and the identity element of $\sym(\Omega)$ 
by $\id_\Omega$.

The subgroup of $\sym(G)$ consisting of the right multiplications with 
elements in $G$ is denoted by $G_\r$. 
For a subgroup $H \le G$, the subgroup of 
$G_\r$ consisting of the right multiplications with 
elements in $H$ is denoted by $H_r$.

For a set $\Delta \subseteq \sym(G)$ and a section $S=U/L$ of $G$,  
we set 
$$
\Delta^S=\{f^S:~f\in \Delta,~S^f=S\},
$$
where $S^f=S$ means that $f$ maps $U$ to itself and permutes the 
$L$-cosets in $U$ among themselves (or in other words, the partition 
of $U$ into its $L$-cosets is $f$-invariant), and $f^S$ denotes the bijection of $S$ induced by $f$.

If $K \leq \sym(\Omega)$ and $\alpha \in \Omega$, then the
stabilizer of $\alpha$ in $K$ and the set of all orbits under $K$ on 
$\Omega$ are denoted by $K_{\alpha}$ and $\orb(K,\Omega)$,  respectively.

The cyclic group of order $n$ is denoted by $C_n$.

The class of finite abelian groups all of whose Sylow subgroups are  elementary abelian is denoted by $\E$.

\section{S-rings}

In this section, we provide the reader with background information about S-rings. 
Regarding notation and terminology, we mostly follow~\cite{HM,KR}. 
We refer to the survey paper \cite{MP} 
for more information on S-rings and their link with combinatorics. 

Let $G$ be a finite group and $\Z G$ be the integer group ring. Denote the identity element of $G$ by $e$. A subring  $\A \subseteq \Z G$ is called an \emph{S-ring} (or \emph{Schur ring}) over $G$ if there exists a partition $\cS(\A)$ of $G$ such that:
\smallskip

\begin{enumerate}[{\rm (1)}]
\setlength{\itemsep}{0.4\baselineskip}
\item $\{e\} \in \cS(\A)$, 
\item  if $X \in \cS(\A)$ then $X^{-1} \in \cS(\A)$,
\item $\A=\Span_{\Z}\{\underline{X} :\ X \in \cS(\A)\}$.
\end{enumerate}
\smallskip

\noindent
The elements of $\cS(\A)$ are called the \emph{basic sets} of  
$\A$ and the number $\rk(\A)=|\cS(\A)|$ is called the \emph{rank} of  $\A$. One can easily check that $XY \in \cS(\A)$   
for $X, Y \in \cS(\A)$ with $|X|=1$ or $|Y|=1$.

Let $\A$ be an S-ring over a group $G$. A set $X \subseteq G$ is  called an \emph{$\A$-set} if $\underline{X} \in \A$. A subgroup 
$H \leq G$ is called an \emph{$\A$-subgroup} if $H$ is an 
$\A$-set. With each $\A$-set $X$ one can naturally associate two 
$\A$-subgroups, namely, $\langle X \rangle$ and $\rad(X)$.

Let $L \unlhd U\leq G$. A section $U/L$ is called an \emph{$\A$-section} if $U$ and $L$ are $\A$-subgroups. If $S=U/L$ is an $\A$-section then the module
$$
\A_S=\Span_{\Z}\{ \underline{X}^\pi:~X \in \cS(\A),~
X \subseteq U \},
$$
where $\pi: U \to U/L$ is the canonical epimorphism, is an 
S-ring over $S$.

The S-ring $\A$ over $G$ is called \emph{primitive} if $G$ has no nontrivial proper $\A$-subgroups. Clearly, if $H$ is a maximal 
$\A$-subgroup, then the S-ring $\A_{G/H}$ is primitive. 

Let $\A$ and $\B$ be S-rings over groups $G$ and 
$H$, respectively. A bijection $f : G \rightarrow H$ is called an 
\emph{isomorphism} (also called \emph{combinatorial isomorphism}) 
from $\A$ to $\B$ if
$$
\{R(X)^f : X \in \cS(\A)\}=\{ R(Y) : Y \in \cS(\B) \},
$$
where $R(X)^f=\{( g^f,~h^f) : ~(g,~h)\in R(X)\}$. 
The set of all isomorphisms from $\A$ to $\B$ is denoted by 
$\iso(\A,\B)$. 

The set $\iso(\A,\A)$ is a subgroup of $\sym(G)$, which contains a 
normal subgroup defined as
$$
\aut(\A):=\{ f \in \iso(\A):~R(X)^f=R(X)~\text{for every}~X\in \cS(\A) \}.
$$
This subgroup is called the \emph{automorphism group} of $\A$. 
One can see that $G_\r \leq \aut(\A)$. 
The S-ring $\A$ is called \emph{normal} if 
$G_\r \lhd \aut(\A)$. 

Clearly, if $S$ is an $\A$-section, then $\aut(\A)^S \leq \aut(\A_S)$. 
Denote the group $\aut(\A) \cap \aut(G)$ by $\aut_G(\A)$. 
It easy to check that if $S$ is an $\A$-section, then 
$(\aut_G(\A))^S \leq \aut_S(\A_S)$.

Let $K$ be a subgroup of $\sym(G)$ containing $G_\r$. 
Schur~\cite{Schur} proved that the $\Z$-submodule
$$
V(K, G):=\Span_{\Z}\{\underline{X} :~X\in \orb(K_e,~G)\},
$$
is an S-ring over $G$. An S-ring $\A$ over $G$ is called  \emph{schurian} if $\A=V(K, G)$ for some $K$ such that 
$G_\r \leq K \leq  \sym(G)$. 
If $\A=V(K, G)$ for some $K \leq \sym(G)$ containig $G_\r$ and $S$ is an $\A$-section, then $\A_S=V(K^S, S)$. Therefore, if 
$\A$ is schurian, then so is $\A_S$ for every $\A$-section $S$. 
One can verify that $\A$ is schurian if and only if 
$\A=V(\aut(\A), G)$.

An S-ring $\A$ over a group $G$ is called \emph{cyclotomic} if there exists $M \leq\aut(G)$ such that $\cS(\A)=\orb(M,\, G)$. In this case 
$\A$ is denoted by $\cyc(M,\, G)$. Obviously, 
$\A=V(G_\r M, G)$. So every cyclotomic S-ring is schurian. If $\A=\cyc(K,\, G)$ for some 
$K \leq \aut(G)$ and $S$ is an $\A$-section, then $\A_S=\cyc(K^S,\, G)$. Therefore, if $\A$ is cyclotomic, then so is $\A_S$ for every $\A$-section $S$. 

Let $K_1, K_2 \leq \sym(\Omega)$ be arbitrary permutation groups of a 
set $\Omega$. We say that $K_1$ and $K_2$ are \emph{$2$-equivalent}, denoted by $K_1 \approx_2 K_2$, if 
$\orb(K_1, \Omega^2)=\orb(K_2, \Omega^2)$, where both 
$K_1$ and $K_2$ act on $\Omega^2$ with the usual  componentwise action. This is an equivalence relation on the set of all subgroups of 
$\sym(\Omega)$, and every equivalence class has a unique maximal element. The unique element of the class containing a given $K \le \sym(\Omega)$ is called the \emph{$2$-closure} of $K$, denoted by $K^{(2)}$. 
If $\A=V(K, G)$ for some $K \leq \sym(G)$ containing $G_\r$, then $K^{(2)}=\aut(\A)$. 
An S-ring $\A$ over $G$ is called 
\emph{ $2$-minimal} if
$$
\big\{ K \leq \sym(G) :~K \geq G_\r~\text{and}~K \approx_2 \aut(\A) \big\}=\{ \aut(\A) \}.
$$

Two groups $K_1, K_2 \leq \aut(G)$ are defined to be \emph{Cayley equivalent}, denoted by $K_1 \approx_{\rm Cay} K_2$, if 
$\orb(K_1, G)=\orb(K_2, G)$. If $\A=\cyc(K, G)$ for some $K \leq \aut(G)$, then $\aut_G(\A)$ is the largest group which is 
Cayley equivalent to $K$. A cyclcotomic S-ring $\A$ over $G$ is 
called \emph{Cayley minimal} if
$$
\big\{ K \leq \aut(G) :~K \approx_{Cay} \aut_G(\A) \big\}=\{ \aut_G(\A) \}.
$$

\section{$\CI$-S-rings}

Let $G$ be a finite group and $X \subseteq G$ be any subset. 
Babai~\cite{Babai} gave the following criterion for $X$ to be a $\CI$-subset. 

\begin{lemm}\label{B}
{\rm (cf.\ \cite[Lemma~3.1]{Babai})}
A subset $X \subset G$ is a $\CI$-subset if and only if 
any two regular subgroups of $\aut(\cay(G, X))$, which are isomorphic to $G$, are conjugate in $\aut(\cay(G, X))$. 
\end{lemm}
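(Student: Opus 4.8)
The plan is to prove the equivalence by translating between the Cayley representations of $\Gamma:=\cay(G,X)$ and the regular subgroups of $A:=\aut(\Gamma)$. The one external ingredient I will use is the classical description of the normalizer of a regular representation: $N_{\sym(G)}(G_\r)=\mathrm{Hol}(G)=G_\r\rtimes\aut(G)$, so that every $\varphi\in\aut(G)$ normalizes $G_\r$, and conversely any $\psi\in\sym(G)$ that normalizes $G_\r$ and fixes $e$ belongs to $\aut(G)$. I will also use the elementary observation, read off directly from the arc sets $R(T),R(X)$, that a group automorphism $\varphi\in\aut(G)$ is an isomorphism $\cay(G,T)\to\cay(G,X)$ if and only if $T^{\varphi}=X$.

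For the forward implication suppose $X$ is a $\CI$-subset; it suffices to show that every regular subgroup $K\le A$ with $K\cong G$ is conjugate to $G_\r$ in $A$, since then any two such subgroups are conjugate via $G_\r$. First I would manufacture from $K$ a second Cayley representation of $\Gamma$: fixing the base vertex $e$, regularity of $K$ gives the bijection $k\mapsto e^{k}$, and choosing a group isomorphism $\theta:G\to K$ I set $x^{f}=e^{\theta(x)}$. A direct check at $e$ together with regularity shows that $f^{-1}G_\r f=K$ and that $f$ is an isomorphism $\cay(G,T)\to\cay(G,X)$ for $T:=\{x\in G:e^{\theta(x)}\in X\}$. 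In particular $\cay(G,T)\cong\cay(G,X)$, so the $\CI$-property yields $\psi\in\aut(G)$ with $T^{\psi}=X$; hence $\psi$ is also an isomorphism $\cay(G,T)\to\cay(G,X)$ and, being a group automorphism, normalizes $G_\r$. Then $a:=\psi^{-1}f\in A$ and $a^{-1}G_\r a=f^{-1}(\psi G_\r\psi^{-1})f=f^{-1}G_\r f=K$, which is the desired conjugacy.

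For the converse, assume that any two regular subgroups of $A$ isomorphic to $G$ are conjugate in $A$, and let $f:\cay(G,X)\to\cay(G,T)$ be an isomorphism for some $T\subseteq G$. Conjugation by $f$ carries $A$ onto $\aut(\cay(G,T))$, and since $G_\r\le\aut(\cay(G,T))$ it follows that $fG_\r f^{-1}$ is a regular subgroup of $A$ isomorphic to $G$. By hypothesis there is $a\in A$ with $a^{-1}(fG_\r f^{-1})a=G_\r$, so $\psi_0:=f^{-1}a$ normalizes $G_\r$ and is an isomorphism $\cay(G,T)\to\cay(G,X)$. Replacing $\psi_0$ by $\rho\psi_0$ for a suitable $\rho\in G_\r\le\aut(\cay(G,T))$, I obtain an isomorphism $\psi$ that still normalizes $G_\r$ and now also fixes $e$; the normalizer fact then gives $\psi\in\aut(G)$, and the arc-set observation forces $T^{\psi}=X$. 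Thus $X$ and $T$ are Cayley isomorphic, proving that $X$ is a $\CI$-subset.

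The verifications of the intertwining relation $f^{-1}G_\r f=K$ and the careful tracking of the domains and codomains of the various isomorphisms are routine bookkeeping. The genuinely structural step, which I would single out as the crux, is the identification of $N_{\sym(G)}(G_\r)$ with the holomorph: it is precisely this fact that upgrades the bare conjugacy information ``$\psi$ normalizes $G_\r$ and fixes $e$'' to ``$\psi\in\aut(G)$'', and thereby converts conjugacy of regular subgroups inside $A$ into Cayley isomorphism of connection sets and back.
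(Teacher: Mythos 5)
Your proof is correct. The paper itself gives no proof of this lemma (it is stated with a citation to Babai's Lemma~3.1), and your argument---manufacturing a second Cayley representation from a regular subgroup $K\le\aut(\cay(G,X))$ via the bijection $x\mapsto e^{\theta(x)}$, and using the identification $N_{\sym(G)}(G_\r)=G_\r\rtimes\aut(G)$ to convert conjugacy of regular subgroups into Cayley isomorphism of connection sets and back---is essentially Babai's original argument, so there is nothing in the paper to contrast it with.
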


In what follows, we write $\Sup(G_\r)=\{K \leq \sym(G):~ K \geq G_\r\}$, and let $\Sup_2(G_\r)$ denote its subset consisting of the $2$-closed groups, or formally,  
$$
\Sup_2(G_\r)=\{K \in \Sup(G_\r):~ K^{(2)}=K\}.
$$

A group $K \in \Sup(G_\r)$ is said to be 
\emph{$G$-transjugate} if for every $\gamma \in \sym(G)$, the inclusion 
$(G_\r)^\gamma \leq K$ implies that $(G_\r)^\gamma$ and 
$G_\r$ are conjugate in $K$ (see \cite[Definition~1]{HM}). 

Let $K_1, K_2\in \Sup(G_\r)$ such that $K_1 \le K_2$. Then 
$K_1$ is said to be a \emph{$G$-complete  
subgroup} of $K_2$, denoted by $K_1 \preceq_G K_2$, 
if for every $\gamma \in \sym(G)$, the inclusion 
$(G_\r)^\gamma \leq K_2$ implies $(G_\r)^{\gamma \delta} \leq K_1$ for a suitable $\delta \in K_2$ (see \cite[Definition~2]{HM}).  The relation 
$\preceq_G$ is a partial order on $\Sup(G_\r)$. The set of the 
minimal elements of the poset $(\Sup_2(G_\r),\preceq_G)$ is denoted by 
$\Sup_2^{\min}(G_\r)$. 

Let $\A$ be an S-ring over $G$. 
The set of all isomorphisms from $\A$ onto some S-ring over $G$ will 
be denoted by $\iso(\A)$, or formally, 
$$
\iso(\A)=\{ f \in \sym(G):~ f \in \iso(\A,\A'),~\A'~\text{is an S-ring over}~G\}.
$$
Let $\iso_e(\A)=\{f \in \iso(\A) : e^f=e\}$. 

Note that, for any $f \in \iso_e(\A)$ and any two basic sets 
$X, Y \in \cS(\A)$, $(XY)^f=X^f Y^f$ (see \cite[page 347]{HM}).
Furthermore, if $S$ is an $\A$-section and 
$f \in \iso_e(\A)$ such that $S^f=S$, then 
$f^S \in \iso_{\bar{e}}(\A_S)$, where $\bar{e}$ denotes the identity 
element of $S$ (see \cite[Proposition~2.7]{HM}).

One can see that $\aut(\A)\aut(G) \subseteq \iso(\A)$. 
The S-ring $\A$ is defined to be a \emph{$\CI$-S-ring} if 
$\iso(\A)=\aut(\A) \aut(G)$ (see \cite[Definition~3]{HM}). 
For instance, it is easy to check that 
$\Z G$ and the S-ring of rank~$2$ over $G$ are $\CI$-S-rings. 

\begin{lemm}\label{trans}
{\rm (cf.\ \cite[Theorem~2.6]{HM})}
Let $K \in \Sup_2(G_\r)$ and $\A=V(K, G)$. 
The following conditions are equivalent:
\begin{enumerate}[{\rm (1)}]
\item $K$ is $G$-transjugate. 
\item $\iso(\A)=\aut(\A) \aut(G)$.
\item $\iso_e(\A)=\aut(\A)_e \aut(G)$.
\end{enumerate}
\end{lemm}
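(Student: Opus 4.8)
The plan is to prove the cycle of implications by first setting up a dictionary between elements of $\iso(\A)$ and conjugates of $G_\r$ contained in $\aut(\A)$, and then reading off each of the three conditions through this dictionary. The starting observation is that since $K\in\Sup_2(G_\r)$ we have $K=K^{(2)}=\aut(\A)$, so throughout I may replace $K$ by $\aut(\A)$; in particular condition~(1) becomes a statement purely about the abstract group $\aut(\A)$ and its regular subgroups, which is exactly the flavour of Babai's criterion (Lemma~\ref{B}). I will also freely use the inclusion $\aut(\A)\aut(G)\subseteq\iso(\A)$ recorded in Section~3, so that in (2) only the reverse inclusion needs proof.

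The key lemma I would isolate is the equivalence $f\in\iso(\A) \Longleftrightarrow (G_\r)^{f^{-1}}\leq\aut(\A)$, valid for every $f\in\sym(G)$. For the forward direction I would use that an isomorphism $f$ from $\A$ onto an S-ring $\A'$ over $G$ conjugates automorphism groups, $\aut(\A')=f^{-1}\aut(\A)f$; since $G_\r\le\aut(\A')$ this gives $fG_\r f^{-1}\le\aut(\A)$, i.e. $(G_\r)^{f^{-1}}\le\aut(\A)$. For the converse, assuming $(G_\r)^{f^{-1}}\le\aut(\A)$, the relabelled colour classes $\{R(X)^f:X\in\cS(\A)\}$ are invariant under $G_\r$, hence each equals $R(Y)$ for a unique $Y\subseteq G$; checking that these sets form a partition satisfying the S-ring axioms (the diagonal maps to the diagonal, and transposition of relations corresponds to inversion) exhibits $f$ as an isomorphism onto a genuine S-ring $\A'$ over $G$, so $f\in\iso(\A)$. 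Setting $\gamma=f^{-1}$, this dictionary identifies the set $\{\gamma:(G_\r)^\gamma\le\aut(\A)\}$ with $\{(G_\r)^{f^{-1}}:f\in\iso(\A)\}$, which are precisely the regular subgroups of $\aut(\A)$ isomorphic to $G$ (using that any two regular subgroups of $\sym(G)$ isomorphic to $G$ are conjugate in $\sym(G)$). Thus $G$-transjugacy of $K$ says exactly that each such regular subgroup is $\aut(\A)$-conjugate to $G_\r$.

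With the dictionary in hand, (1)$\Leftrightarrow$(2) follows from the description of the holomorph $N_{\sym(G)}(G_\r)=G_\r\aut(G)$ together with the fact that $\aut(G)$ normalises $G_\r$, explicitly $\sigma r_h\sigma^{-1}=r_{h^\sigma}$. For (1)$\Rightarrow$(2): given $f\in\iso(\A)$, transjugacy yields $k\in\aut(\A)$ with $(G_\r)^{f^{-1}k}=G_\r$, so $f^{-1}k\in N_{\sym(G)}(G_\r)=G_\r\aut(G)$; solving for $f$ and pushing the $G_\r$-part (which already lies in $\aut(\A)$) past the automorphism factor via $\sigma^{-1}r^{-1}=(\sigma^{-1}r^{-1}\sigma)\sigma^{-1}$ expresses $f$ as an element of $\aut(\A)\aut(G)$. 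For (2)$\Rightarrow$(1): writing $f=k\sigma$ with $k\in\aut(\A)$, $\sigma\in\aut(G)$, and using $\sigma G_\r\sigma^{-1}=G_\r$ gives $(G_\r)^{f^{-1}}=(G_\r)^{k^{-1}}$, which is $\aut(\A)$-conjugate to $G_\r$; since every $\gamma$ with $(G_\r)^\gamma\le\aut(\A)$ arises as some $f^{-1}$, this is precisely transjugacy.

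Finally (2)$\Leftrightarrow$(3) is the routine ``normalise at $e$'' argument. Any $f\in\iso(\A)$ may be right-multiplied by the translation $r_{(e^f)^{-1}}\in G_\r\le\aut(\A)$ (here $\iso(\A)$ is closed under right multiplication by $G_\r$, since $G_\r\le\aut(\A')$ for every target $\A'$), landing in $\iso_e(\A)$, so $\iso(\A)=\iso_e(\A)\,G_\r$; conversely any element of $\aut(\A)\aut(G)$ fixing $e$ has its $\aut(\A)$-factor fixing $e$, because $\aut(G)$ fixes $e$. Combining these with $G_\r\aut(G)=\aut(G)G_\r$ transports the factorisation between the two statements. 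I expect the main obstacle to be the second step: pinning down the isomorphism/regular-subgroup dictionary with the correct conjugation conventions, and verifying that a $G_\r$-invariant relabelling genuinely produces an S-ring over $G$ (and not merely an isomorphic coherent configuration). Once that is secured, the remainder is bookkeeping inside the holomorph.
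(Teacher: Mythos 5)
Your proposal is correct, but note that the paper itself contains no proof of this lemma to compare against: it is quoted as \cite[Theorem~2.6]{HM}, and your argument is essentially the standard one underlying that cited result. The skeleton is right and all the conjugation bookkeeping checks out: since $K\in\Sup_2(G_\r)$ you may indeed identify $K=K^{(2)}=\aut(\A)$; the dictionary $f\in\iso(\A)\Leftrightarrow (G_\r)^{f^{-1}}\le\aut(\A)$ holds with your conventions (the forward direction via $\aut(\A')=f^{-1}\aut(\A)f$, using that $G_\r\le\aut(\A')$ for any S-ring $\A'$ over $G$); and the passage between (1) and (2) via $N_{\sym(G)}(G_\r)=G_\r\aut(G)$, as well as the normalisation-at-$e$ argument for (2)$\Leftrightarrow$(3), are exactly as they should be. The one point you flag as the main obstacle --- that a $G_\r$-invariant relabelling of the colour classes produces a genuine S-ring and not merely an isomorphic coherent configuration --- is resolved by the standard Schur--Wielandt correspondence: a bijection of $G$ preserves intersection numbers, so the partition $\{R(X)^f: X\in\cS(\A)\}$ of $G\times G$ is again coherent, each class is of the form $R(Y)$ by $G_\r$-invariance, and for such a partition the closure of $\Span_{\Z}\{\underline{Y}\}$ under multiplication is precisely the intersection-number condition (the same computation that shows $V(K,G)$ is an S-ring). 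With that observation supplied, your proof is complete.
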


It is well-known that $\aut(\cay(G,X)) \in \Sup_2(G_\r)$ 
for every $X \subseteq G$. Therefore, combining together    
Lemma~\ref{B}, the definition of $\preceq_G$ and Lemma~\ref{trans} yields the following implication:

\begin{lemm}\label{cimin}
If $V(K, G)$ is a $\CI$-S-ring for every 
$K \in \Sup_2^{\rm min}(G_\r)$, then $G$ is a $\DCI$-group.
\end{lemm}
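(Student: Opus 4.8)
The plan is to chain together the three ingredients already developed in this section, namely Babai's criterion (Lemma~\ref{B}), the partial order $\preceq_G$ with its minimal elements $\Sup_2^{\min}(G_\r)$, and the transjugacy characterization of the $\CI$-property (Lemma~\ref{trans}). The target is to show that every subset $X \subseteq G$ is a $\CI$-subset, which by definition makes $G$ a $\DCI$-group. I would begin by fixing an arbitrary $X \subseteq G$ and passing from the Cayley digraph to the group-theoretic side: set $K = \aut(\cay(G,X))$, and recall the quoted fact that $K \in \Sup_2(G_\r)$, so $K^{(2)} = K$ and $\A := V(K,G)$ is an S-ring over $G$ with $\aut(\A) = K$.

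The key step is to locate a minimal element beneath $K$. Since $K \in \Sup_2(G_\r)$ and $(\Sup_2(G_\r), \preceq_G)$ is a poset with $G$ finite, the set $\{ L \in \Sup_2(G_\r) : L \preceq_G K \}$ is finite and nonempty (it contains $K$ itself), so it has a minimal element $M$; by transitivity of $\preceq_G$, this $M$ is in fact minimal in the whole poset, that is, $M \in \Sup_2^{\min}(G_\r)$, and $M \preceq_G K$. By the hypothesis of the lemma, $V(M,G)$ is a $\CI$-S-ring, so by the implication $(2)\Rightarrow(1)$ of Lemma~\ref{trans} the group $M$ is $G$-transjugate.

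Now I would transport transjugacy from $M$ up to $K$ using the definition of $\preceq_G$. Let $\gamma \in \sym(G)$ be arbitrary with $(G_\r)^\gamma \le K$; I must show $(G_\r)^\gamma$ and $G_\r$ are conjugate in $K$. Because $M \preceq_G K$, the inclusion $(G_\r)^\gamma \le K$ yields a $\delta \in K$ with $(G_\r)^{\gamma\delta} \le M$. Applying $G$-transjugacy of $M$ to the element $\gamma\delta$ gives a $\mu \in M \le K$ with $(G_\r)^{\gamma\delta\mu} = (G_\r)^\mu{}'$ equal to a conjugate of $G_\r$ inside $M$; concretely $(G_\r)^{\gamma\delta\mu} = G_\r$ after adjusting by the conjugating element, so $(G_\r)^\gamma$ is conjugate to $G_\r$ by the element $\delta\mu \in K$. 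Since $K = \aut(\cay(G,X))$, this says precisely that any two regular subgroups of $\aut(\cay(G,X))$ isomorphic to $G$ (which are exactly the conjugates $(G_\r)^\gamma$ landing in $K$) are conjugate to $G_\r$, hence to one another, in $\aut(\cay(G,X))$.

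Finally I would invoke Babai's criterion (Lemma~\ref{B}) in the forward direction: the conjugacy just established is exactly its hypothesis, so $X$ is a $\CI$-subset. As $X$ was arbitrary, every subset of $G$ is a $\CI$-subset, and $G$ is a $\DCI$-group. The main obstacle, and the step deserving the most care, is the transfer argument in the third paragraph: one must verify that the conjugator $\delta$ supplied by $M \preceq_G K$ and the conjugator $\mu$ supplied by the $G$-transjugacy of $M$ compose to a single element of $K$ witnessing conjugacy of $(G_\r)^\gamma$ with $G_\r$, keeping track that all conjugations take place inside $K$ rather than in the ambient $\sym(G)$. The remaining steps — existence of a minimal element below $K$ and the reduction from Lemma~\ref{trans} — are routine once this composition is handled correctly.
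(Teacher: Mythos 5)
Your proposal is correct and fills in exactly the chain the paper itself invokes (it gives no detailed proof, citing only the combination of Lemma~\ref{B}, the definition of $\preceq_G$, and Lemma~\ref{trans}): pass to $K=\aut(\cay(G,X))\in\Sup_2(G_\r)$, descend via finiteness and transitivity of $\preceq_G$ to some $M\in\Sup_2^{\min}(G_\r)$, convert the hypothesis on $V(M,G)$ into $G$-transjugacy of $M$ by Lemma~\ref{trans}, lift conjugacy back up to $K$ through the witness $\delta\mu\in K$, and conclude with Babai's criterion, using the standard fact that regular subgroups of $\sym(G)$ isomorphic to $G$ are exactly the $\sym(G)$-conjugates of $G_\r$. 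This is the paper's argument, carried out carefully.
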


In fact, we are going to derive Theorem~\ref{main} by 
showing that the condition in the lemma above holds when 
$G \cong C_p^4 \times C_q$, where $p$ and $q$ are distinct primes.

\begin{rem}\label{cisrings}
The condition that $V(K, G)$ is a $\CI$-S-ring for every 
$K \in \Sup_2^{\rm min}(G_\r)$ is equivalent to say that 
every schurian S-ring over $G$ is $\CI$. 
In proving Theorem~\ref{main} we will use the fact that 
the schurian S-rings over each proper subgroup of $C_p^4 \times C_q$ are 
$\CI$. Here are the references:
\begin{enumerate}[]
\setlength{\itemsep}{0.4\baselineskip}
\item $C_p^n$, $p >2$ and $n \le 4$ 
(Hirasaka and Muzychuk~\cite[Theorem~1.3]{HM});
\item $C_2^n$ and $n \le 4$ (this follows from~\cite[Propostion~1.4]{HM} and Lemma~\ref{2srings});
\item $C_p \times C_q$ (Klin and P\"{o}schel~\cite{KP});
\item $C_p^2 \times C_q$ 
(Kov\'acs and Muzychuk~\cite[Theorem~2.2]{KM});
\item $C_p^3 \times C_q$ 
(Muzychuk and Somlai~\cite[Theorem~16]{MS}).
\end{enumerate}
\end{rem}

The following lemma is an easy consequence of the 
definitions.

\begin{lemm}\label{cinormal}
Let $\A$ be a schurian S-ring over an abelian group $G$ and 
$S$ be an $\A$-section such that at least one of the following conditions  
holds:
\begin{enumerate}[{\rm (1)}]
\item $\A$ is a $\CI$-S-ring and $S_\r \lhd \aut(\A)^S$.
\item $\A_S$ is a normal $\CI$-S-ring.
\end{enumerate} 
Then, for every $f \in \iso_e(\A)$ such that $S^f=S$, 
$f^S \in \aut(S)$. In particular, $f^S=\id_S$ whenever 
$f^S$ fixes pointwise a generating set of $S$.
\end{lemm}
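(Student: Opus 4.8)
The plan is to reduce everything to the restriction $f^S$ and to the structural description of $\iso_{\bar e}$ supplied by Lemma~\ref{trans}. Note first that the final assertion is immediate once $f^S\in\aut(S)$ is established, since a group automorphism fixing a generating set pointwise must be the identity; so the whole task is to prove $f^S\in\aut(S)$. The two standing tools will be the fact (quoted from \cite[Proposition~2.7]{HM}) that $S^f=S$ forces $f^S\in\iso_{\bar e}(\A_S)$, and the elementary holomorph observation that a point stabilizer inside a permutation group having $S_\r$ as a \emph{normal} regular subgroup consists of group automorphisms, i.e.\ lies in $\aut(S)$ under the usual identification $S\cong S_\r$.

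Under hypothesis (2) the argument is short. Since $\A$ is schurian, so is $\A_S$, and as $\A_S$ is a $\CI$-S-ring, Lemma~\ref{trans} applied to $\A_S$ gives $\iso_{\bar e}(\A_S)=\aut(\A_S)_{\bar e}\,\aut(S)$. Normality of $\A_S$ means $S_\r\lhd\aut(\A_S)$, whence $\aut(\A_S)_{\bar e}\le\aut(S)$ by the observation above; therefore $\iso_{\bar e}(\A_S)\subseteq\aut(S)$, and in particular $f^S\in\aut(S)$.

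Under hypothesis (1) I would first apply Lemma~\ref{trans} to $\A$ itself, which is legitimate since $\A$ is schurian, so $\A=V(\aut(\A),G)$ with $\aut(\A)$ being $2$-closed: the $\CI$ property then yields $\iso_e(\A)=\aut(\A)_e\,\aut(G)$. Write the given $f$ as $f=g\alpha$ with $g\in\aut(\A)_e$ and $\alpha\in\aut(G)$. The key point is that $g$ already respects $S$: an element of $\aut(\A)_e$ stabilizes every $\A$-subgroup (it permutes each basic set because it fixes $e$ and preserves every arc relation $R(X)$), so it fixes $U$ and $L$ setwise; moreover, because $G$ is abelian, $R(L)$ is the equivalence relation whose classes are the $L$-cosets, and this relation is $\aut(\A)$-invariant since $\underline{L}\in\A$. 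Hence $S^g=S$ and $g^S$ fixes $\bar e$, so $g^S\in(\aut(\A)^S)_{\bar e}$; the hypothesis $S_\r\lhd\aut(\A)^S$ then places $g^S$ in $\aut(S)$. From $S^f=S=S^g$ together with $f=g\alpha$ we read off $S^\alpha=S$, so $\alpha$ stabilizes $U$ and $L$ and induces $\alpha^S\in\aut(S)$. Finally $f^S=g^S\alpha^S\in\aut(S)$, as the restriction map is multiplicative on the elements fixing $S$.

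The part that needs the most care is case~(1): unlike case~(2) one cannot argue directly on $\A_S$, but must control the two factors of $f=g\alpha$ separately. The crucial step — and the only place where abelianness of $G$ is genuinely used — is the claim that $g\in\aut(\A)_e$ automatically preserves the section $S$, which rests on identifying $R(L)$ with the $L$-coset equivalence relation; granting this, the deduction $S^\alpha=S$ and the holomorph argument for $g^S$ are routine.
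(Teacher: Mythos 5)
Your proof is correct and takes essentially the same route as the paper: under condition (2) you combine Lemma~\ref{trans} applied to the schurian S-ring $\A_S$ with the holomorph fact that a point stabilizer normalizing the regular subgroup $S_\r$ lies in $\aut(S)$, and under condition (1) you decompose $f=\gamma\varphi$ with $\gamma\in\aut(\A)_e$ and $\varphi\in\aut(G)$ via the $\CI$-property, then show each factor induces an automorphism of $S$ --- exactly the paper's argument. The only difference is that you make explicit the details the paper leaves implicit (why $\gamma\in\aut(\A)_e$ preserves the section $S$, and why $S^\varphi=S$ follows), which is a fair elaboration rather than a different approach.
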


\begin{proof}
Then $f^S \in \iso_{\bar{e}}(\A_S)$, where $\bar{e}$ denotes the identity element of $S$. 

Assume first that condition (1) holds. Then 
$f=\gamma \varphi$ for some $\gamma  \in \aut(\A)_e$ and 
$\varphi \in \aut(G)$ because $\A$ is a $\CI$-S-ring.
Since $e^f=e$, it follows that $e^\gamma=e$, hence $S^\gamma = S$, 
and thus $S^\varphi=S$. Then $f^S = \gamma^S \varphi^S$, where 
$\gamma^S  \in \aut(\A)^S$ and $\varphi^S \in \aut(S)$. 
Since $S_\r \lhd \aut(\A)^S$, $\gamma^S \in \aut(S)$, and 
hence $f^S \in \aut(S)$ as well. 

Now, suppose that condition (2) holds. 
Then $f^S \in \aut(\A_S)_{\bar{e}} \aut(S)=\aut(S)$, where the inclusion holds because $\A_S$ is a $\CI$-S-ring, whereas the equality holds because $\A_S$ is also normal. 
\end{proof}

We finish this section with another lemma about $\CI$-S-rings. 
In order to formulate the lemma, it is necessary to introduce some further notation. 
Let $\A$ be a schurian S-ring over an abelian group $G$, and $L$ be 
an $\A$-subgroup of $G$. Then, the partition of $G$ into the  
$L$-cosets is $\aut(\A)$-invariant. The \emph{kernel} of the action of 
$\aut(\A)$ on the latter cosets is denoted by $\aut(\A)_{G/L}$, that is, 
\begin{equation}\label{eq:kernel}
\aut(\A)_{G/L}=\{\gamma \in \aut(\A) :~(Lg)^\gamma=Lg~\text{for every}~g \in G\}. 
\end{equation}
Since $\aut(\A)_{G/L} \lhd \aut(\A)$, the group $K:=\aut(\A)_{G/L}G_{\rm right}$ can be formed. Clearly, $K \leq \aut(\A)$. It follows from~\cite[Proposition~2.1]{HM} that $K=K^{(2)}$.

\begin{lemm}\label{minnorm}
Let $\A$ be a schurian S-ring over an abelian group $G$, 
$L$ be an $\A$-subgroup of $G$, and let 
$K=\aut(\A)_{G/L}G_{\rm right}$.
Suppose that both $\A_{G/L}$ and $V(K, G)$ are $\CI$-S-rings and 
$\A_{G/L}$ is $2$-minimal.  Then $\A$ is a $\CI$-S-ring.
\end{lemm}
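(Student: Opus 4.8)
The plan is to verify the transjugacy criterion of Lemma~\ref{trans}. Since $\A$ is schurian, $\A=V(\aut(\A),G)$ with $\aut(\A)\in\Sup_2(G_\r)$, so by Lemma~\ref{trans} it suffices to show that $\aut(\A)$ is $G$-transjugate: given $\gamma\in\sym(G)$ with $H:=(G_\r)^{\gamma}\le\aut(\A)$, I must produce $\delta\in\aut(\A)$ with $H^{\delta}=G_\r$. Because $L$ is an $\A$-subgroup, the partition of $G$ into $L$-cosets is $\aut(\A)$-invariant, hence a block system for both $H$ and $G_\r$. Let $\pi\colon\aut(\A)\to\aut(\A)^{G/L}$ be the induced action, so that $\ker\pi=\aut(\A)_{G/L}$ and a short check gives $K=\aut(\A)_{G/L}G_\r=\pi^{-1}\big((G/L)_\r\big)$.

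First I would pin down the image of $\pi$. As $\A$ is schurian, $\A_{G/L}=V(\aut(\A)^{G/L},G/L)$, whence $\aut(\A)^{G/L}\approx_2\aut(\A_{G/L})$ and $\aut(\A)^{G/L}\ge (G/L)_\r$; the $2$-minimality of $\A_{G/L}$ then forces $\aut(\A)^{G/L}=\aut(\A_{G/L})$, i.e.\ $\pi$ is onto $\aut(\A_{G/L})$. Next I would normalize the block action. Since $G$ is abelian, $H$ is abelian and $\pi(H)$ is a regular subgroup of $\aut(\A_{G/L})$ of order $[G:L]$ (the kernel $H\cap\ker\pi$ is the full setwise stabilizer of the block through $e$). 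Being a quotient of $H\cong G$ of this order, $\pi(H)\cong G/L$ — for $G\cong C_p^4\times C_q$ this is immediate, as every quotient of $G$ is determined by its order. Thus $\pi(H)$ is $\sym(G/L)$-conjugate to $(G/L)_\r$, and the $\CI$-property of $\A_{G/L}$ (Lemma~\ref{trans} applied on $G/L$, which is schurian) yields $\bar\delta\in\aut(\A_{G/L})$ with $\pi(H)^{\bar\delta}=(G/L)_\r$. Using surjectivity of $\pi$ I lift $\bar\delta$ to $\delta_0\in\aut(\A)$ and replace $H$ by $H^{\delta_0}$; now $\pi(H)=(G/L)_\r$, equivalently $H\le\pi^{-1}\big((G/L)_\r\big)=K$.

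It remains to conjugate $H$ to $G_\r$ inside $K$. Here $V(K,G)$ is a $\CI$-S-ring and $\aut(V(K,G))=K$ because $K=K^{(2)}$; hence $K$ is $G$-transjugate by Lemma~\ref{trans}, and since $H\le K$ is regular and isomorphic to $G$, it is conjugate to $G_\r$ by some $\delta_1\in K\le\aut(\A)$. Composing the two conjugations produces $\delta\in\aut(\A)$ with $H^{\delta}=G_\r$, which establishes transjugacy and therefore that $\A$ is a $\CI$-S-ring.

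The main obstacle is the middle step: reconciling the action of $H$ on the block system $G/L$ with that of $G_\r$. Two points must cooperate. One is that $\pi(H)$ is genuinely a regular group \emph{isomorphic to} $G/L$, so that the $\CI$-property of $\A_{G/L}$ — which only controls regular subgroups of $\aut(\A_{G/L})$ isomorphic to $G/L$ — can be invoked; abelianity of $G$ secures the regularity, and for the group at hand the isomorphism type is forced by the order. The other, and the genuinely load-bearing, point is that the conjugating element found on $G/L$ must be realized inside $\aut(\A)$: this is exactly what the $2$-minimality of $\A_{G/L}$ provides, by collapsing $\aut(\A)^{G/L}$ onto the whole group $\aut(\A_{G/L})$. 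Once the block action has been normalized so that $H\le K$, the hypothesis that $V(K,G)$ is $\CI$ closes the argument with no further difficulty.
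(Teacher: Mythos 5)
Your proof is correct and takes essentially the same route as the paper's: identify $\aut(\A)^{G/L}$ with $\aut(\A_{G/L})$ via $2$-minimality and schurity, use the $\CI$-property of $\A_{G/L}$ (through Lemma~\ref{trans}) to move any regular copy of $G$ inside $K=\aut(\A)_{G/L}G_\r$, and then use the $\CI$-property of $V(K,G)$ together with $K=K^{(2)}$ to conjugate it to $G_\r$. The only cosmetic differences are that the paper packages the middle step as the relation $K \preceq_G \aut(\A)$ while you perform the conjugation directly, and that you explicitly flag the point (implicit in the paper) that $\pi(H)\cong G/L$ is what licenses invoking transjugacy on the quotient.
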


\begin{proof}
We first prove that $\aut(\A)^{G/L}$ is $G/L$-transjugate. 
The groups $\aut(\A_{G/L})$ and $\aut(\A)^{G/L}$ are $2$-equivalent because 
$\A_{G/L}=V(\aut(\A)^{G/L},L)$. Then, by the $2$-minimality of $\A_{G/L}$, $\aut(\A)^{G/L}=\aut(\A_{G/L})$. Since $\A_{G/L}$ is a $\CI$-S-ring, 
Lemma~\ref{trans} yields that $\aut(\A)^{G/L}=\aut(\A_{G/L})$ is 
$G/L$-transjugate.

Now, let us show that $K \preceq_G \aut(\A)$. 
Let $H$ be a regular subgroup of $\aut(\mathcal{A})$ such 
that $H \cong G$. Then $H^{G/L}$ is an abelian and transitive subgroup of 
$\aut(\A)^{G/L}$, and hence it is regular on $G/L$. 
Therefore, $H^{G/L} \cong (G/L)_\r=(G_\r)^{G/L}$. 
There exists $\gamma \in \aut(\A)$ such that $(H^{G/L})^{\gamma^{G/L}}=(G/L)_\r=(G_\r)^{G/L}$ because $\aut(\A)^{G/L}$ is $G/L$-transjugate. 
This implies that $H^\gamma \leq K$, and thus 
$K \preceq_G \aut(\A)$.

Finally, we prove that $\aut(\A)$ is $G$-transjugate. 
This is equivalent to say that $\A$ is a $\CI$-S-ring by Lemma~\ref{trans}. 
Again, let $H$ be a regular subgroup of $\aut(\A)$ such that $H \cong G$. Since 
$K \preceq_G \aut(\A)$, there exists $\gamma \in\aut(\A)$ such that 
$H^\gamma \leq K$. The S-ring $V(K,\, G)$ is a $\CI$-S-ring by one of the  assumptions of the lemma. So $K=\aut(V(K,\, G))$ is $G$-transjugate by Lemma~\ref{trans}. Therefore, $H^\gamma$ and $G_\r$ are conjugate in $K$, hence in 
$\aut(\A)$ as well. The lemma is proved.
\end{proof}

\section{Wreath and star products}

Let $\A$ be an S-ring over a group $G$ and $S=U/L$ be an 
$\A$-section of $G$. The S-ring~$\A$ is called the \emph{$S$-wreath product} or the \emph{generalized wreath product} of $\A_U$ and 
$\A_{G/L}$ if $L \lhd G$ and $L \leq \rad(X)$ for each basic set 
$X$ outside~$U$. In this case we write $\A=\A_U \wr_{S} \A_{G/L}$, and omit $S$ when $U=L$. The $S$-wreath product is called \emph{nontrivial} (or \emph{proper})  if $L \neq \{e\}$ and $U \neq G$.  We say that an S-ring $\A$ is \emph{decomposable} if it is the nontrivial $S$-wreath product for some $\A$-section $S$ of $G$. The construction of the generalized wreath product of S-rings was introduced  in~\cite{EP} and independently in~\cite{LM} under the name \emph{wedge product}.  

\begin{lemm}\label{cigwr}
{\rm (\cite[Theorem~1.1]{KR})}
Let $G \in \E$, $\A$ be an S-ring over $G$, and $S=U/L$ be an 
$\A$-section of $G$. Suppose that $\A$ is the nontrivial $S$-wreath product and the S-rings $\A_U$ and $\A_{G/L}$ are $\CI$-S-rings. 
Then $\A$ is a $\CI$-S-ring whenever 
$$
\aut_S(\A_S)=\aut_U(\A_U)^{S} \aut_{G/L}(\A_{G/L})^S.
$$ 
In particular,  $\A$ is a $\CI$-S-ring if $\aut_S(\A_S)=\aut_U(\A_U)^S$ or $\aut_S(\A_S)=\aut_{G/L}(\A_{G/L})^S$.
\end{lemm}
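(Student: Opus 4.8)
The plan is to prove Lemma~\ref{cigwr} by analysing an arbitrary isomorphism in $\iso(\A)$ and showing it decomposes into an automorphism of $\A$ followed by a group automorphism, i.e.\ that $\iso(\A)=\aut(\A)\aut(G)$. Since $\A=\A_U\wr_S\A_{G/L}$ is schurian (being a $\CI$-S-ring candidate built from schurian pieces), by Lemma~\ref{trans} it suffices to show $\aut(\A)$ is $G$-transjugate, or equivalently to work with $\iso_e(\A)$ directly. So first I would take $f\in\iso_e(\A)$ and exploit the generalized wreath structure: because $L\le\rad(X)$ for every basic set $X$ outside $U$, the section $S=U/L$ is $\A$-invariant and $f$ must respect the induced sections. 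The hypothesis that $\A_U$ and $\A_{G/L}$ are $\CI$-S-rings gives, via Lemma~\ref{cinormal}, that the induced maps $f^U$ and $f^{G/L}$ are (up to composing with $\aut(\A)_e$) group automorphisms of $U$ and $G/L$ respectively.

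The key step is to glue these two ``local'' automorphisms into a single global automorphism of $G$. Here is where the displayed hypothesis
$$
\aut_S(\A_S)=\aut_U(\A_U)^{S}\,\aut_{G/L}(\A_{G/L})^S
$$
enters: the maps $f^U$ and $f^{G/L}$ both induce bijections on the common section $S=U/L$, and a priori these two induced actions $(f^U)^S$ and $(f^{G/L})^S$ need not agree with a single automorphism of $S$. The hypothesis guarantees that the group $\aut_S(\A_S)$ is exactly the product of the two restricted automorphism groups, which is precisely the compatibility condition needed to correct $f$ by an element coming from one factor so that the corrected induced maps on $S$ coincide. Concretely, I would write the induced automorphism of $S$ as a product $\psi_U^S\psi_{G/L}^S$ with $\psi_U\in\aut_U(\A_U)$ and $\psi_{G/L}\in\aut_{G/L}(\A_{G/L})$, and then use these to adjust the candidate global automorphism on the two halves $U$ and $G/L$ so that they patch consistently over $S$.

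Once the two halves agree on $S$, I would invoke the structure of the generalized wreath product to assemble a genuine $\varphi\in\aut(G)$ that agrees with $f$ up to $\aut(\A)$: since $G\in\E$ is abelian with coprime-order or elementary-abelian Sylow structure, an automorphism of $U$ and an automorphism of $G/L$ that are compatible on the overlap $S$ extend to an automorphism of $G$, because $L$ is a direct factor complemented inside $G$ (this is where $G\in\E$ is essential). The final bookkeeping is to verify that $f\varphi^{-1}$ lies in $\aut(\A)$, using that it now acts trivially on both induced sections and that $L\le\rad(X)$ controls the basic sets outside $U$. The ``In particular'' clause is the degenerate case: if $\aut_S(\A_S)=\aut_U(\A_U)^S$ then the $G/L$-factor contributes nothing new on $S$, so the compatibility is automatic and no correction is needed (and symmetrically for the other equality).

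The main obstacle I anticipate is the gluing step: controlling how the two induced group automorphisms interact on the shared section $S$ and producing a single $\varphi\in\aut(G)$ that simultaneously matches $f^U$ and $f^{G/L}$. The factorization hypothesis is tailored exactly to remove this obstruction, so the crux of the argument is recognizing that the displayed equation is the precise algebraic condition allowing the two local automorphisms to be reconciled; everything else is an application of Lemma~\ref{cinormal} on each factor together with the wreath-product description of the basic sets.
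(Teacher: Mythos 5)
First, a point of reference: the paper contains no proof of this lemma at all — it is imported verbatim from \cite[Theorem~1.1]{KR} — so your proposal can only be measured against the argument given there. Your architecture is in fact the same as that proof's: pass to $f \in \iso_e(\A)$, split the induced isomorphisms $f^U$ and $f^{G/L}$ into (S-ring automorphism)$\cdot$(group automorphism) using the $\CI$-property of $\A_U$ and $\A_{G/L}$, observe that the two group automorphisms induce maps on $S$ whose discrepancy lies in $\aut(\A_S) \cap \aut(S)=\aut_S(\A_S)$, use the displayed factorization to absorb that discrepancy into corrections from the two factors, glue the corrected automorphisms of $U$ and $G/L$ into a single $\varphi \in \aut(G)$ (this gluing is exactly the construction $\alpha(\varphi,\psi)$ in the proof of Lemma~\ref{gwrcycl}, and is where $G \in \E$ is needed), and finally check $f\varphi^{-1} \in \aut(\A)_e$. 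You correctly identified the role of the hypothesis and the role of $\E$.

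That said, two steps as written have real gaps. (i) You assert that, because of the wreath structure, ``$f$ must respect the induced sections,'' i.e.\ $U^f=U$ and $L^f=L$. This is false as stated: $f$ carries $U$ and $L$ onto $\A'$-subgroups $U^f$ and $L^f$ of the image S-ring $\A'=\A^f$, which is a generalized wreath product over the possibly different section $U^f/L^f$. A preliminary normalization is required: since $G \in \E$, any nested pair $L^f \le U^f$ with $|L^f|=|L|$ and $|U^f|=|U|$ can be carried onto $L \le U$ by an element of $\aut(G)$, and one replaces $f$ by that composition before anything else; without this, the objects $f^U$, $f^{G/L}$, $f^S$ you manipulate are not defined. (ii) The decomposition of $f^U$ and $f^{G/L}$ does not come from Lemma~\ref{cinormal}, which requires normality hypotheses ($S_\r \lhd \aut(\A)^S$, or $\A_S$ normal) that are unavailable here and proves the stronger conclusion $f^S \in \aut(S)$; what you need is weaker and is just the definition of a $\CI$-S-ring (via Lemma~\ref{trans}) applied to $f^U \in \iso(\A_U)$ and $f^{G/L} \in \iso(\A_{G/L})$. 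Relatedly, in the last step one cannot arrange that $f\varphi^{-1}$ ``acts trivially'' on the sections; what one gets is $(f\varphi^{-1})^U \in \aut(\A_U)_e$ and $(f\varphi^{-1})^{G/L} \in \aut(\A_{G/L})_{\bar{e}}$, and the closing observation must be: an element of $\iso_e(\A)$ respecting the $L$-cosets and inducing automorphisms on $\A_U$ and $\A_{G/L}$ fixes every basic set of $\A$ setwise (inside $U$ because automorphisms fixing $e$ fix basic sets, outside $U$ because such sets are unions of $L$-cosets determined by their images in $G/L$), and an isomorphism in $\iso_e(\A)$ fixing all basic sets lies in $\aut(\A)_e$. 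These repairs preserve your plan, but as written the proposal skips precisely the points where the argument could break.
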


Note that, if $U=L$ in Lemma~\ref{cigwr}, then $\A$ is a 
$\CI$-S-ring because the group $\aut_S(\A_S)$ is trivial.

\begin{lemm}\label{cicayleymin}
With the notations in Lemma~\ref{cigwr}, suppose that at least one of the S-rings 
$\A_U$ and $\A_{G/L}$ is cyclotomic and $\A_S$ is Cayley minimal. 
Then $\A$ is a $\CI$-S-ring.
\end{lemm}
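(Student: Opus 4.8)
The plan is to reduce everything to the ``in particular'' clause of Lemma~\ref{cigwr}: it suffices to prove that
$$
\aut_S(\A_S)=\aut_U(\A_U)^S \quad\text{or}\quad \aut_S(\A_S)=\aut_{G/L}(\A_{G/L})^S,
$$
according to which of $\A_U$, $\A_{G/L}$ is cyclotomic, the two cases being symmetric. The inclusions $\aut_U(\A_U)^S \le \aut_S(\A_S)$ and $\aut_{G/L}(\A_{G/L})^S \le \aut_S(\A_S)$ hold already, being instances of the general fact that $(\aut_G(\A))^S \le \aut_S(\A_S)$ for an $\A$-section $S$. So the task is to promote one of these inclusions to an equality, and the engine for this will be the Cayley minimality of $\A_S$: if I can show that the smaller group is Cayley equivalent to $\aut_S(\A_S)$, then minimality forces the two to coincide.

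Suppose, without loss of generality, that $\A_U$ is cyclotomic, say $\A_U=\cyc(K, U)$ with $K \le \aut(U)$. First I would record that $\A_S$ is then cyclotomic as well: since $S=U/L$ is an $\A_U$-section and $\A_S=(\A_U)_S$, the stability of cyclotomy under taking sections (noted in Section~2) gives $\A_S=\cyc(K^S, S)$, so that $\orb(K^S, S)=\cS(\A_S)$. Next I would line up the chain of subgroups of $\aut(S)$,
$$
K^S \ \le\ \aut_U(\A_U)^S \ \le\ \aut_S(\A_S),
$$
in which the first inclusion holds because $K \le \aut_U(\A_U)$ (the group $\aut_U(\A_U)$ being the largest one Cayley equivalent to $K$), and the second is the general fact cited above.

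The decisive observation is that the two ends of this chain have the same orbits on $S$, namely $\orb(K^S, S)=\cS(\A_S)=\orb(\aut_S(\A_S), S)$. Being squeezed between two permutation groups with identical orbit partitions, the middle group $\aut_U(\A_U)^S$ must share those orbits, i.e.\ $\aut_U(\A_U)^S \approx_{\mathrm{Cay}} \aut_S(\A_S)$. Since $\A_S$ is Cayley minimal, this Cayley equivalence forces $\aut_U(\A_U)^S=\aut_S(\A_S)$, and Lemma~\ref{cigwr} then yields that $\A$ is a $\CI$-S-ring. The case in which $\A_{G/L}$ is cyclotomic is handled identically, with $\A_{G/L}$ and the equality $\aut_S(\A_S)=\aut_{G/L}(\A_{G/L})^S$ playing the roles of their counterparts.

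The step I expect to require the most care is the bookkeeping behind the cyclotomic identifications, rather than any deep argument: checking that $\A_S$ really is the section S-ring of the cyclotomic factor (so that $\cS(\A_S)=\orb(K^S, S)$) and that $K$ sits inside $\aut_U(\A_U)$ so that $K^S \le \aut_U(\A_U)^S$. Once the orbit equality $\orb(\aut_U(\A_U)^S, S)=\cS(\A_S)$ is in hand, the Cayley minimality hypothesis delivers the required factorization essentially for free, and the conclusion follows from Lemma~\ref{cigwr}.
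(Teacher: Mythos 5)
Your proof is correct and follows essentially the same route as the paper: both arguments show that the restriction to $S$ of the automorphism group attached to the cyclotomic factor is Cayley equivalent to $\aut_S(\A_S)$, invoke Cayley minimality to upgrade this to equality, and then conclude via the ``in particular'' clause of Lemma~\ref{cigwr}. The only cosmetic difference is that the paper identifies $\A_S=\cyc(\aut_{G/L}(\A_{G/L})^S,S)$ directly (using that $\aut_{G/L}(\A_{G/L})$ is the largest group Cayley equivalent to the given cyclotomic group), whereas you reach the same Cayley equivalence by sandwiching $\aut_U(\A_U)^S$ between $K^S$ and $\aut_S(\A_S)$.
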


\begin{proof}
Suppose that $\A_{G/L}$ is cyclotomic. Then $\A_S$ is also cyclotomic. 
Clearly, $\A_S=\cyc(\aut_{G/L}(\A_{G/L})^S,\, S)$. Therefore, 
$$
\aut_{G/L}(\A_{G/L})^S \approx_{\cay} \aut_S(\A_S).
$$ 
Since $\A_S$ is Cayley minimal, $\aut_{G/L}(\A_{G/L})^S=\aut_S(\A_S)$ and we are done by Lemma~\ref{cigwr}. 
Similarly, in the case when $\A_U$ is cyclotomic, we obtain that 
$\aut_U(\A_U)^S=\aut_S(\A_S)$ and $\A$ is a $\CI$-S-ring by 
Lemma~\ref{cigwr}. The lemma is proved.
\end{proof}

\begin{lemm}\label{gwrcycl}
Let $G$ be an elementary abelian $p$-group, $\A$ be an S-ring over 
$G$, and $S=U/L$ be an $\A$-section of $G$. Suppose that 
$\A$ is the nontrivial $S$-wreath product, $\A_U=\cyc(K_1,\, U)$ for some $K_1\leq \aut(U)$, and $\A_{G/L}=\cyc(K_2,\, G/L)$ for some $K_2 \leq \aut(G/L)$. Then $\A$ is cyclotomic whenever $K_1^S=K_2^S$.
\end{lemm}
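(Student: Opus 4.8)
The plan is to exhibit an explicit group $M\le\aut(G)$, built from $K_1$ and $K_2$, whose orbits on $G$ are precisely the basic sets of $\A$; this immediately gives $\A=\cyc(M,G)$. Since $G$ is elementary abelian, I treat it as an $\mathbb{F}_p$-vector space and fix a decomposition $G=L\oplus C\oplus D$ with $U=L\oplus C$, so that $C$ is a copy of the section $S=U/L$ and $D$ a copy of $G/U$. Relative to the flag $L\le U\le G$, every automorphism of $G$ stabilizing $L$ and $U$ is block upper triangular, and the middle (\,$C$\,)-block is exactly the action induced on $S$. I then set
$$
M=\{\varphi\in\aut(G):~L^\varphi=L,~U^\varphi=U,~\varphi|_U\in K_1,~\varphi^{G/L}\in K_2\},
$$
which is a subgroup of $\aut(G)$ because restriction to $U$ and the induced action on $G/L$ are homomorphisms into $\aut(U)$ and $\aut(G/L)$.

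The crucial step is an extension lemma: $\{\varphi|_U:\varphi\in M\}=K_1$ and $\{\varphi^{G/L}:\varphi\in M\}=K_2$. This is precisely where the hypothesis $K_1^S=K_2^S$ is used. Given $k_1\in K_1$, its induced action $k_1^S$ lies in $K_1^S=K_2^S$, so there is $k_2\in K_2$ with $k_2^S=k_1^S$; since $\varphi|_U$ and $\varphi^{G/L}$ overlap exactly in the block acting on $C\cong S$, the two prescriptions $\varphi|_U=k_1$ and $\varphi^{G/L}=k_2$ are compatible, and filling the remaining $L$-$D$ block arbitrarily (say by zero) yields a block-upper-triangular, hence invertible, $\varphi\in M$. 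The symmetric statement for $K_2$ follows identically.

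With the extension lemma I would match orbits on the two parts of $G$ separately. For $u\in U$, each $\varphi\in M$ fixes $U$ setwise and restricts to an element of $K_1$, so $M$-orbits inside $U$ refine $K_1$-orbits, while the extension lemma gives the reverse inclusion; thus the $M$-orbits in $U$ are the $K_1$-orbits, i.e.\ the basic sets of $\A_U=\cyc(K_1,U)$, which are exactly the basic sets of $\A$ contained in $U$. For $g\notin U$, the extension lemma shows that $\pi$ maps the $M$-orbit of $g$ onto the $K_2$-orbit of $\pi(g)$, where $\pi:G\to G/L$; this orbit lies outside $U/L$. Transitivity on each $L$-coset is supplied by the shearing automorphisms $\varphi_x$ fixing $U$ pointwise and sending $d\in D$ to $d+x(d)$ for a linear map $x:D\to L$: each $\varphi_x$ lies in $M$ (trivial on both $U$ and $G/L$), and since $g$ has nonzero $D$-component these maps move $g$ through its whole $L$-coset. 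Hence the $M$-orbit of $g$ is $\pi^{-1}$ of the $K_2$-orbit of $\pi(g)$, which, by the generalized wreath condition $L\le\rad(X)$ for basic sets $X$ outside $U$, is exactly the basic set of $\A$ through $g$.

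The main obstacle is the extension lemma, namely proving that $M$ is large enough to recover both $K_1$ and $K_2$; the entire difficulty collapses to the compatibility of the two prescribed actions on the common section $S$, which is guaranteed by $K_1^S=K_2^S$, together with the freedom in the off-diagonal blocks afforded by the vector-space structure of $G$. Once the orbits are matched on both $U$ and $G\setminus U$, we obtain $\orb(M,G)=\cS(\A)$, and therefore $\A=\cyc(M,G)$ is cyclotomic, as required.
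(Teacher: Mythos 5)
Your proof is correct and follows essentially the same route as the paper's: both exploit the $\mathbb{F}_p$-vector-space structure to extend each compatible pair $(\varphi,\psi)\in K_1\times K_2$ with $\varphi^S=\psi^S$ to an automorphism of $G$, and both use shear automorphisms $D\to L$ (the paper's $\beta(i,l)$) to sweep out the $L$-cosets of basic sets outside $U$. The only difference is packaging: the paper generates a group by explicit such extensions and verifies invariance and transitivity on each basic set, while you take the full fiber-product group $M$ and derive the same orbit identification from your extension lemma.
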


\begin{proof}
Since $G$ is elementary abelian, there exist groups $D$ and $V$ such that $G=D \times U$ and $U=V \times L$. Let $D=\langle x_1 \rangle \times \ldots \times \langle x_t \rangle$, where $o(x_i)=p$ for every 
$i \in \{1,\ldots,t\}$. Choose $(\varphi,\psi) \in K_1 \times K_2$ with 
$\varphi^S=\psi^S$. Suppose that $(x_iL)^{\psi}=y_iz_iL$, where 
$y_i\in D$, $z_i\in V$, and $i \in \{1,\ldots,t\}$. The elements $y_iz_i$, $i\in\{1,\ldots,t\}$, generate a group $D^{'}$ of rank~$t$ because $\rk(D)=t$ and $\psi \in \aut(G/L)$. Note that, $D^{'}\cap U=\{e\}$. Indeed, let $g \in D^{'} \cap U$. Then $(gL)^{\psi^{-1}} \in U/L \cap D/L=\{L\}$ because $(U/L)^{\psi}=U/L$. So $g \in L$. However, 
$D^{'} \leq D\times V$ and hence $g \in  (D\times V) \cap L=\{e\}$. This means that $G=D^{'} \times U$. The group $G$ is elementary abelian and hence there exists $\alpha(\varphi,\psi) \in \aut(G)$ such that
$$
\alpha(\varphi,\psi)^U=\varphi,~(x_i)^{\alpha(\varphi,\psi)}=
y_iz_i,~i \in \{1,\ldots,t\}.
$$

For each $i \in \{1,\ldots,t\}$ and each $l \in L$, define $\beta(i,l) \in \aut(G)$ in the following way:
$$
\beta(i,l)^U=\id_{U},~x_j^{\beta(i,l)}=x_jl^{\delta_{ij}},
$$
where $\delta_{ij}$ is the Kronecker delta.

Put
$$
K=\big\langle \alpha(\varphi,\psi),\beta(i,l):~(\varphi,\psi) \in K_1 \times K_2,~\varphi^S=\psi^S,~i \in \{1,\ldots,t\},~l \in L \big\rangle \leq \aut(G).
$$
We finish the proof by showing that $\A=\cyc(K,G)$. 

We prove first that every basic set of $\A$ is $K$-invariant. Let 
$X \in \cS(\A)$. Let $\alpha=\alpha(\varphi,\psi)$, where 
$(\varphi,\psi) \in K_1\times K_2$ and $\varphi^S=\psi^S$. The definition of $\alpha$ implies that $L^\alpha=L$, $U^\alpha=U$, and 
for every $d\in D$, 
\begin{equation}\label{eq1}
(dL)^{\psi}=d^{\alpha}L.
\end{equation}

If $X \subseteq U$ then $X^\alpha=X^\varphi=X$. The last equality holds because $\A_U=\cyc(K_1,\, U)$ and $\varphi \in K_1$. 
Suppose that $X$ lies outside $U$.  Then $L \leq \rad(X)$. So  $X=d_1v_1L \cup \ldots \cup d_sv_sL$ for some $d_i \in D$ and 
$v_i \in V$, $i \in \{1,\ldots,s\}$. Note that 
\begin{equation}\label{eq2}
X^\alpha=d_1^{\alpha}v_1^{\alpha}L\cup \ldots \cup d_s^{\alpha}v_s^{\alpha}L=d_1^{\alpha}v_1^{\varphi}L\cup \ldots~\cup d_s^{\alpha}v_s^{\varphi}L.
\end{equation}
Due to $\varphi^{U/L}=\psi^{U/L}$, we have for every $i \in \{1,\ldots,s\}$,
\begin{equation}\label{eq3}
v_i^{\varphi}L=(v_iL)^{\varphi}=(v_iL)^{\psi}.
\end{equation} 
Denote the canonical epimorphism from $G$ to $G/L$ by $\pi$. Using Eqs.~\eqref{eq1}--\eqref{eq3}, a  straightforward check shows that
\begin{eqnarray}
\nonumber \pi(X^{\alpha})=\{d_1^{\alpha}v_1^{\varphi}L,\ldots,d_s^{\alpha}v_s^{\varphi}L\}=\{(d_1L)^{\psi}(v_1L)^{\psi},\ldots,(d_sL)^{\psi}(v_sL)^{\psi}\}=\\
\nonumber \{d_1v_1L,\ldots,d_sv_sL\}^{\psi}=\{d_1v_1L,\ldots,d_sv_sL\}=\pi(X).
\end{eqnarray}
The fourth equality holds because $\A_{G/L}=\cyc(K_2,\, G/L)$ and 
$\psi \in K_2$. Since $L \leq \rad(X)$ and $L \leq \rad(X^\alpha)$, 
we obtain that $X^\alpha=X$.

Let $\beta=\beta(i,l)$ for some $i \in \{1,\ldots,t\}$ and $l \in L$. If 
$X \subseteq U$, then $X^\beta=X$ because $\beta$ acts trivially on 
$U$. If $X$ lies outside $U$, then $X^\beta=X$ because 
$L \leq \rad(X)$. Therefore, every basic set of $\A$ is invariant under the action of the generators of $K$ and hence under the action of 
$K$ as well, as claimed.

Now, we prove that every basic set of $\A$ is equal to an orbit under 
$K$. For this purpose it is sufficient to show that $K$ acts transitively on every  basic set. Again, let $X \in \cS(\A)$. If $X \subseteq U$, then $K$ is transitive on $X$ because $K^U=K_1$ and $X$ is an orbit under $K_1$. 

Suppose that $X$ lies outside $U$. Then $L \leq \rad(X)$ and  $X=d_1v_1L\cup \ldots \cup d_sv_sL$, where $d_i \in D$ and 
$v_i \in V$. Given an arbitary $x \in X$, we have to find $\sigma \in K$ such that
 $(d_1v_1)^\sigma=x$. 
 
Let $x=d_iv_i l$, where $i \in \{1,\ldots,s\}$ and $l \in L$. The set $\{d_1v_1L,\ldots,d_sv_sL\}$ is a basic set of 
$\A_{G/L}$, and hence it is an orbit under $K_2$. This yields that there exists $\psi \in K_2$ with $(d_1v_1L)^{\psi}=d_iv_iL$. Since $K_1^S=K_2^S$, there exists $\varphi \in K_1$ with 
$\varphi^S=\psi^S$. Put $\alpha=\alpha(\varphi,\psi)$. Using Eqs.~\eqref{eq1} and~\eqref{eq3}, we obtain that $(d_1v_1)^\alpha L=(d_1v_1L)^\psi=d_iv_i L$. So $(d_1v_1)^\alpha=d_iv_il^{'}$ for some $l^{'} \in L$. 

Then $d_i=x_1^{r_1} \cdots x_t^{r_t}$ for some numbers 
$r_1,\ldots,r_t \in \{0,\ldots,p-1\}$. At least one of the numbers 
$r_i$'s, say $r_m$, is nonzero because $d_iv_i \notin U$. Since $L$ is  an elementary abelian $p$-group and $r_m$ is coprime to $p$, there exists $l^{''} \in L$ such that $(l^{''})^{r_m}=l(l^{'})^{-1}$. 

Put $\beta=\beta(m,l^{''})$ and $\sigma=\alpha\beta$. Then 
$$
(d_1v_1)^{\sigma}=(d_1v_1)^{\alpha\beta}=(d_iv_il^{'})^{\beta}=d_iv_i(l^{''})^{r_m}l^{'}=d_iv_il=x.
$$
Therefore, $K$ indeed acts transitively on $X$. Thus, every basic set of $\A$ is an orbit under $K$, and so $\A=\cyc(K,\, G)$. The lemma is proved. 
\end{proof}

Let $V$ and $W$ be $\A$-subgroups. We say that $\A$ is the \emph{star product} of $\A_V$ and $\A_W$ and write 
$\A=\A_V \star \A_W$ if the following conditions hold:
\smallskip

\begin{enumerate}[{\rm (1)}]
\setlength{\itemsep}{0.4\baselineskip}
\item $V\cap W\lhd W$;
\item each $X \in \cS(\A)$ with $X \subseteq (W\setminus V) $ is a union of some $(V\cap W)$-cosets;
\item for each $X \in \cS(\A)$ with $X \subseteq G\setminus (V\cup W)$ there exist $Y \in \cS(\A_V)$ and $Z \in \cS(\A_W)$ such that 
$X=YZ$.
\end{enumerate}
\smallskip

The star product is called \emph{nontrivial} if $V\neq \{e\}$ and 
$V \neq G$.  Notice that, if $V \cap W \neq \{e\}$ and $V \ne G$, 
then $\A$ is the nontrivial $V/(V\cap W)$-wreath product. The construction of the star product of S-rings was introduced  in~\cite{HM}.

\begin{lemm}\label{cistar}
{\rm (cf. \cite[Proposition~3.2~and~Theorem~4.1]{KM})}
Let $G \in \E$ and $\A$ be a schurian S-ring over $G$. Suppose that 
$\A=\A_V \star \A_W$ for some $\A$-subgroups $V$ and $W$ of 
$G$, and that the S-rings $\A_V$ and $\A_{W/(V\cap W)}$ are 
$\CI$-S-rings. Then $\A$ is a $\CI$-S-ring.
\end{lemm}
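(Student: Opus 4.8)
The plan is to prove the $\CI$-property through the transjugacy reformulation. Since $\A$ is schurian we have $\A=V(\aut(\A),G)$ with $\aut(\A)\in\Sup_2(G_\r)$, so by Lemma~\ref{trans} it suffices to show $\iso_e(\A)=\aut(\A)_e\aut(G)$; equivalently, every $f\in\iso_e(\A)$ can be written as $f=g\varphi$ with $g\in\aut(\A)_e$ and $\varphi\in\aut(G)$. First I would dispose of the degenerate star products. If $V=G$ then $\A=\A_V$ is a $\CI$-S-ring by hypothesis. If $V=\{e\}$, then condition~(3) forces every basic set outside $W$ to equal $YZ$ with $Y\in\cS(\A_{\{e\}})=\{\{e\}\}$, i.e.\ to lie in $W$; hence there are no such sets, $W=G$, and $\A=\A_W=\A_{W/(V\cap W)}$ is $\CI$ by hypothesis. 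So from now on $V\neq\{e\},G$, and I set $L=V\cap W$ and $S=V/L$.

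The core idea is to read off from $f$ its action on the two $\CI$-sections $\A_V$ and $\A_{W/L}$ and then reassemble a single group automorphism. Since $G\in\E$, after composing $f$ with a suitable element of $\aut(G)$ I may assume that the $\A$-subgroups $V,W,L$ are $f$-invariant; then $f^{V}\in\iso_e(\A_V)$ and $f^{W/L}\in\iso_{\bar e}(\A_{W/L})$, where $\bar e$ is the identity of $W/L$. Because $\A_V$ and $\A_{W/L}$ are $\CI$-S-rings, Lemma~\ref{trans} (applied over $V$ and over $W/L$) produces group-automorphism ``shadows'' $\varphi_1\in\aut(V)$ and $\varphi_2\in\aut(W/L)$ such that $f^{V}\varphi_1^{-1}\in\aut(\A_V)_e$ and $f^{W/L}\varphi_2^{-1}\in\aut(\A_{W/L})_{\bar e}$. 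The two shadows are forced to agree on the common section $S=V/L$, since both compute the induced action of $f$ there; this compatibility $\varphi_1^{S}=\varphi_2^{S}$ is what will allow the gluing.

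Next I would build a single $\varphi\in\aut(G)$ that restricts to $\varphi_1$ on $V$ and induces $\varphi_2$ on $W/L$. This is exactly the type of construction carried out in the proof of Lemma~\ref{gwrcycl}: using that $G\in\E$ is elementary-abelian–by–direct factors, one chooses complements and assembles the prescribed partial automorphisms into a genuine automorphism of $G$, the compatibility on $S$ guaranteeing consistency. Having produced $\varphi$, I set $g=f\varphi^{-1}\in\iso_e(\A)$ and verify $g\in\aut(\A)_e$ by checking that $g$ fixes every basic set $X\in\cS(\A)$: if $X\subseteq V$ then $g^{V}\in\aut(\A_V)_e$ fixes it; if $X\subseteq W\setminus V$ then condition~(2) makes $X$ a union of $L$-cosets, so $g$ (which preserves $L$-cosets and whose image on $W/L$ lies in $\aut(\A_{W/L})$) fixes it; and if $X\subseteq G\setminus(V\cup W)$ then condition~(3) gives $X=YZ$ with $Y\in\cS(\A_V)$, $Z\in\cS(\A_W)$, whence $X^{g}=Y^{g}Z^{g}=YZ=X$ by the multiplicativity $(YZ)^{g}=Y^{g}Z^{g}$. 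Thus $f=g\varphi\in\aut(\A)_e\aut(G)$, as required. (When $L=\{e\}$ the partition shows $G=V\times W$ and $\A=\A_V\otimes\A_W$, so $S$ is trivial and the gluing degenerates to $\varphi=\varphi_1\times\varphi_2$; this is the easy end of the argument.)

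I expect the main obstacle to be the gluing step, in two respects. First, one must legitimately pass from $f$ to the pair $(f^{V},f^{W/L})$: arranging that $V,W,L$ are $f$-invariant and that the shadows $\varphi_1,\varphi_2$ genuinely agree on $S$ is where the star structure (conditions (2)--(3)), rather than a mere generalized wreath decomposition, is essential — note that the observation preceding the lemma only identifies $\A$ with $\A_V\wr_S\A_{G/L}$, and $\A_{G/L}$ is \emph{not} among the hypothesized $\CI$-sections, so the argument must avoid it and use $\A_{W/L}$ instead. Second, realizing the combined map as an actual group automorphism of $G$ (not merely a partition-respecting bijection) is the substantive point, and it is precisely the place where the elementary-abelian hypothesis $G\in\E$ is indispensable. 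Philosophically this is the same difficulty that makes direct products of $\CI$-groups fail to be $\CI$ in general; here it is tamed because we work with a single schurian S-ring of rigid star-product shape, so condition~(3) forces the off-diagonal basic sets to be determined by the $V$- and $W/L$-data and the $\E$-structure supplies the complements needed to patch the two shadows together.
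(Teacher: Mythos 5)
First, a point of reference: the paper itself does not prove this lemma at all---it is imported from Kov\'acs--Muzychuk~\cite[Proposition~3.2 and Theorem~4.1]{KM}---so your proposal can only be compared with that source in spirit. Your overall architecture is sound and close to how such statements are handled in the Hirasaka--Muzychuk/Kov\'acs--Muzychuk framework: pass to $\iso_e(\A)$ via Lemma~\ref{trans} (legitimate, since $\A$ and hence its sections are schurian), normalize $f$ so that $V$, $W$ and $L=V\cap W$ are $f$-invariant, extract automorphism shadows $\varphi_1\in\aut(V)$ and $\varphi_2\in\aut(W/L)$ from the two hypothesized $\CI$-sections, glue them into $\varphi\in\aut(G)$, and check that $f\varphi^{-1}$ fixes every basic set using the three defining conditions of the star product. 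The normalization step does rest on two facts you should make explicit: $V^f$ and $W^f$ are subgroups (this follows from the multiplicativity $(XY)^f=X^fY^f$ of elements of $\iso_e(\A)$ on basic sets), and in a group $G\in\E$ any two pairs of subgroups with equal orders and equal orders of intersections lie in one $\aut(G)$-orbit. Your three-case verification at the end is correct, including the treatment of basic sets in $W\setminus V$ (unions of $L$-cosets, permuted coset-wise by $f\varphi^{-1}$, whose induced action on $W/L$ lies in $\aut(\A_{W/L})_{\bar e}$) and of basic sets outside $V\cup W$ via condition~(3) and multiplicativity.

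There is, however, one central claim that is not merely unproven but ill-posed: the ``compatibility'' $\varphi_1^S=\varphi_2^S$ on ``the common section $S=V/L$''. The sections $V/L$ and $W/L$ are not nested in one another: condition~(3) forces $G=VW$, so $G/L=(V/L)\times(W/L)$, and $V/L$ meets $W/L$ trivially; consequently $\varphi_2\in\aut(W/L)$ induces no map whatsoever on $V/L$, and there is no common section on which the two shadows could be compared. Fortunately, no compatibility is needed: since $G\in\E$, write $W=L\times M$; then $G=V\times M$, and $\varphi:=\varphi_1\times\mu\in\aut(G)$, where $\mu\in\aut(M)$ is $\varphi_2$ transported along $M\cong W/L$, restricts to $\varphi_1$ on $V$ and induces $\varphi_2$ on $W/L$ (note $\varphi_1$ preserves $L$, because $f^V$ does and $\aut(\A_V)_e$ fixes every $\A_V$-subgroup, so $\varphi(W)=\varphi_1(L)\mu(M)=W$). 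This unconstrained gluing is exactly what distinguishes the star product from the generalized wreath product, where the analogous compatibility condition $\aut_S(\A_S)=\aut_U(\A_U)^S\aut_{G/L}(\A_{G/L})^S$ of Lemma~\ref{cigwr} genuinely cannot be dispensed with. So delete the compatibility claim rather than attempt to prove it; with that excision and the normalization details filled in, your argument constitutes a complete proof.
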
 

In the special case when $V \cap W=\{e\}$ the star product 
$\A_V \star \A_W$ is the usual \emph{tensor product}, denoted by 
$\A_V \otimes \A_W$ (see~\cite[p.\ 5]{KR}).

\begin{lemm}\label{tenspr}
{\rm (\cite[Lemma 2.8]{FK})}
Let $\A$ be an S-ring over an abelian group $G=G_1\times G_2$, and 
suppose that $G_1$ and $G_2$ are $\A$-subgroups. Then 
$\A=\A_{G_1} \otimes \A_{G_2}$ whenever $\A_{G_1}$ or 
$\A_{G_2}$ is the group ring.
\end{lemm}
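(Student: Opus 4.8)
The plan is to verify directly that $\A$ satisfies the definition of the star product $\A_{G_1}\star\A_{G_2}$ with the two factors playing the roles of $V=G_1$ and $W=G_2$; since $G=G_1\times G_2$ forces $V\cap W=\{e\}$, this star product is by definition the tensor product $\A_{G_1}\otimes\A_{G_2}$. Without loss of generality I would assume that $\A_{G_1}$ is the group ring $\Z G_1$ (the other case is symmetric, swapping the roles of $G_1$ and $G_2$). Then every singleton $\{g_1\}$ with $g_1\in G_1$ is a basic set of $\A_{G_1}$, hence a basic set of $\A$. Because $V\cap W=\{e\}$, conditions~(1) and~(2) in the definition of the star product are vacuous (every $(V\cap W)$-coset is a singleton, and $V\cap W\lhd W$ holds trivially), so the entire content lies in establishing condition~(3).

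To establish~(3), first I would record the standard dichotomy that, since $G_2$ is an $\A$-subgroup, $\underline{G_2}\in\A$ is a $0$--$1$ combination of the basic-set sums; comparing coefficients shows that each basic set of $\A$ is either contained in $G_2$ or disjoint from it. Now I take an arbitrary basic set $X$ of $\A$ and fix any $x\in X$, written uniquely as $x=x_1x_2$ with $x_1\in G_1$ and $x_2\in G_2$. Since $\{x_1^{-1}\}$ is a singleton basic set, the translate $x_1^{-1}X=\{x_1^{-1}\}X$ is again a basic set of $\A$, using the fact that the product of a singleton basic set with a basic set is a basic set. This translate contains $x_1^{-1}x=x_2\in G_2$, so by the dichotomy above it is contained in $G_2$; denote it $Z:=x_1^{-1}X\in\cS(\A_{G_2})$. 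Thus $X=\{x_1\}Z$ with $\{x_1\}\in\cS(\A_{G_1})$ and $Z\in\cS(\A_{G_2})$, which is exactly condition~(3) (and in fact proves it for every basic set, not only for those outside $G_1\cup G_2$).

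Finally, I would conclude by matching the two partitions. The previous step shows that every basic set of $\A$ has the form $g_1Z$ with $g_1\in G_1$ and $Z\in\cS(\A_{G_2})$; conversely, every such $g_1Z=\{g_1\}Z$ is a basic set of $\A$ by the same singleton-times-basic-set fact. Hence $\cS(\A)=\{\,g_1Z:\ g_1\in G_1,\ Z\in\cS(\A_{G_2})\,\}$, which is precisely the partition defining $\A_{G_1}\otimes\A_{G_2}$ (distinct pairs give distinct sets since $g_1=\pr_{G_1}(g_1Z)$ and $Z=\pr_{G_2}(g_1Z)$), so $\A=\A_{G_1}\otimes\A_{G_2}$. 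The only real subtlety — the ``main obstacle'', though it is mild here — is the observation that translating $X$ by the $G_1$-component of one of its elements lands the whole translate inside $G_2$; this rests on combining the singleton-times-basic-set closure property with the inside-or-disjoint dichotomy for the $\A$-subgroup $G_2$, and it is what ultimately forces each basic set to sit in a single $G_2$-coset.
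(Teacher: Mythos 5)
Your proof is correct. Note that there is no internal proof to compare it against: the paper imports this lemma verbatim from \cite[Lemma~2.8]{FK} and never argues it. Your argument is the standard one (and essentially the one in \cite{FK}): reduce by symmetry to $\A_{G_1}=\Z G_1$, use the singleton-times-basic-set closure property (stated in Section~2 of the paper) to see that $x_1^{-1}X=\{x_1^{-1}\}X$ is again a basic set, and use the contained-or-disjoint dichotomy for the $\A$-subgroup $G_2$ to conclude that this translate lies in $G_2$, so that every $X\in\cS(\A)$ has the form $\{x_1\}Z$ with $Z\in\cS(\A_{G_2})$; the converse inclusion of partitions follows from the same closure property. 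Both ingredients you invoke are justified (the dichotomy by comparing coefficients in $\underline{G_2}\in\A$, the closure property as stated in the paper), and your identification of the resulting partition with that of $\A_{G_1}\otimes\A_{G_2}$ is accurate, so the argument is self-contained and gap-free.
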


\section{$p$-S-rings}

Let $p$ be a prime number. An S-ring $\A$ over a $p$-group $G$ is defined to be a \emph{$p$-S-ring} if every basic set of $\A$ has $p$-power size.

\begin{lemm}\label{minpring}
{\rm (\cite[Lemma~5.2]{KM})}
Let $G$ be an abelian group, $K \in \Sup_2^{\min}(G_\r)$ and 
$\A=V(K, G)$. Suppose that $H$ is an 
$\A$-subgroup of $G$ such that $G/H$ is a $p$-group for some prime $p$. Then $\A_{G/H}$ is a $p$-S-ring.
\end{lemm}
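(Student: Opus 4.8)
The plan is to use the minimality of $K$ in the poset $(\Sup_2(G_\r),\preceq_G)$ to shrink $K$ to a subgroup whose action on $G/H$ is a $p$-group, and then to read off the basic sets of $\A_{G/H}$ from that subgroup. Since $H$ is an $\A$-subgroup, the partition of $G$ into $H$-cosets is $\aut(\A)=K$-invariant, so $K^{G/H}$ is a well-defined permutation group of the $p$-group $G/H$ and the natural map $K\to K^{G/H}$ is a homomorphism. I would first choose a Sylow $p$-subgroup $P$ of $K^{G/H}$ containing $(G/H)_\r=(G_\r)^{G/H}$; this is possible because $(G/H)_\r$ is a $p$-subgroup. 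Let $Q\le K$ be the full preimage of $P$ under $K\to K^{G/H}$. Then $G_\r\le Q\le K$ and $Q^{G/H}=P$, and the $2$-closure $K_1:=Q^{(2)}$ satisfies $G_\r\le K_1\le K^{(2)}=K$, so that $K_1\in\Sup_2(G_\r)$.

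The crux is to prove $K_1\preceq_G K$. Take $\gamma\in\sym(G)$ with $(G_\r)^\gamma\le K$. Its image $((G_\r)^\gamma)^{G/H}$ is abelian and transitive on $G/H$, hence regular, and in particular a $p$-subgroup of $K^{G/H}$ (as $G/H$ is a $p$-group). By Sylow's theorem it lies in some $K^{G/H}$-conjugate of $P$; conjugating by an appropriate element of $K^{G/H}$ and lifting it to some $\delta\in K$, I obtain $((G_\r)^{\gamma\delta})^{G/H}\le P$, so that $(G_\r)^{\gamma\delta}\le Q\le K_1$. This is exactly the condition defining $K_1\preceq_G K$. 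As $K_1\le K$ and $K_1\in\Sup_2(G_\r)$, the minimality of $K$ forces $K_1=K$; equivalently, $Q^{(2)}=K$.

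Finally, I would extract the conclusion. Because $Q\ge G_\r$, the S-ring $V(Q,G)$ is schurian with $\aut(V(Q,G))=Q^{(2)}=K=\aut(\A)$, and since a schurian S-ring is recovered from its automorphism group, $V(Q,G)=V(\aut(\A),G)=\A$. Applying the identity $\A_{G/H}=V(Q^{G/H},G/H)$ then gives $\A_{G/H}=V(P,G/H)$. Since $P$ is a $p$-group, so is its point stabilizer, and hence every orbit of that stabilizer on $G/H$ — that is, every basic set of $\A_{G/H}$ — has $p$-power size. This is precisely the assertion that $\A_{G/H}$ is a $p$-S-ring.

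I expect the principal obstacle to be the verification of $K_1\preceq_G K$, where one must show that an arbitrary regular subgroup of $K$ isomorphic to $G$ projects to a regular $p$-subgroup of $K^{G/H}$ and can then be conjugated, by an element of $K$, into the preimage $Q$ of the chosen Sylow subgroup $P$. The delicate points are establishing that the projection is genuinely regular (using that it is both transitive and abelian) and that the conjugating element found in $K^{G/H}$ lifts back to an element of $K$ preserving the required containment in $Q$.
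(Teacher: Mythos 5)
Your proof is correct: every step checks out against the paper's definitions — the Sylow $p$-subgroup $P$ of $K^{G/H}$ containing $(G/H)_\r$, the fact that any regular $\cong G$ subgroup of $K$ projects to a regular (abelian transitive) $p$-subgroup of $K^{G/H}$ and can be conjugated into $P$ by a lift $\delta\in K$, giving $K_1=Q^{(2)}\preceq_G K$, then minimality forcing $Q^{(2)}=K$, so $\A=V(Q,G)$ and $\A_{G/H}=V(Q^{G/H},G/H)=V(P,G/H)$, whose basic sets are orbits of a $p$-group stabilizer. The paper itself gives no proof, citing the result from Kov\'acs--Muzychuk, and your argument is essentially the same Sylow-plus-minimality argument underlying that source, so there is nothing to correct.
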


\renewcommand{\arraystretch}{1.5}
\begin{table}
\begin{tabular}{|c|c|c|c|}
\hline
{\rm no.} & $\A$ & {\rm decomposable} & $|\O_\theta(\A)|$    
\\ \hline 
{\rm 1.} & $\Z C_p^3$ & {\rm no} & $p^3$ 
\\ \hline
{\rm 2}. & $\Z C_p^2 \wr \Z C_p$ & {\rm yes} & $p^2$ 
\\  \hline
{\rm 3.} & $\Z C_p \wr \Z C_p^2$  & {\rm yes} & $p$ 
\\  \hline
{\rm 4.} & $(\Z C_p \wr \Z C_p) \otimes \Z C_p$  & yes & $p^2$ 
\\  \hline
{\rm 5.} & $\Z C_p \wr \Z C_p \wr \Z C_p$  & yes  & $p$
\\  \hline
{\rm 6.} & $\cyc(\langle \sigma \rangle,\, C_p^3),~\sigma=$ 
{\footnotesize $\left(\begin{smallmatrix} 1& 1& 0\\ 0& 1& 1\\ 0& 0&1 
\end{smallmatrix}\right)$} & {\rm no}  & $p$ 
\\ \hline
\end{tabular}
\\ [+1ex]
\caption{$p$-S-rings over $C_p^3$.}
\end{table}

The \emph{thin radical} $\O_\theta(\A)$ of an S-ring $\A$ over a group 
$G$ is the $\A$-subgroup defined as
$$
\O_\theta(\A)=\{ x \in G:~\{x\} \in \cS(\A) \}.
$$  

Until the end of this section $G$ stands for an elementary abelian 
$p$-group of rank~$n$ and $\A$ stands for a $p$-S-ring over $G$.
Given a generating set $\{a_1,\ldots,a_n\}$ of $G$, 
the $n$-tuple $(a_1,\ldots,a_n)$ is called  
an \emph{$\A$-basis} if for each $i \in \{1,\ldots,n\}$,
the subgroup $\langle a_1, \ldots, a_i \rangle$ is an 
$\A$-subgroup.

\begin{lemm}\label{p3}
With the above notations, let $n\leq 3$. 
Then $\A$ is cyclotomic and 
\begin{enumerate}[{\rm (1)}]
\setlength{\itemsep}{0.4\baselineskip}
\item if $n=1$ then $\A \cong \Z C_p$;
\item if $n=2$ then $\A \cong \Z C_p^2$ or 
$\A \cong \Z C_p \wr \Z C_p$;
\item if $n=3$ and $p$ is odd, then $\A$ is isomorphic to one of the 
S-rings listed in Table~$1$; if $n=3$ and $p=2$, then $\A$ is 
is isomorphic to one of the S-rings in rows nos.~1--5 in Table~1.
\end{enumerate}
\end{lemm}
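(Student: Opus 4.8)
The plan is to establish the existence of an \emph{$\A$-basis} and then read off the structure from the rank-$\le 2$ sections. First I would record two structural facts. (i)~Every section of a $p$-S-ring is again a $p$-S-ring: for an $\A$-subgroup $L$ and a basic set $X$, the fibre sizes $|X\cap Lx|$ divide $|L|=p^k$, so the image of $X$ in $G/L$ has $p$-power size. (ii)~A $p$-S-ring over $C_p^m$ with $m\ge 2$ is never primitive, since a primitive permutation group containing a regular elementary abelian subgroup is of affine type, whence its point stabiliser has a suborbit of size $p^m-1$, contradicting the $p$-power suborbit condition. Combining~(ii) with~(i) and inducting on $m$ yields, for every $p$-S-ring over $C_p^m$, a chain of $\A$-subgroups $\{e\}=G_0<G_1<\dots<G_m=G$ with $|G_i|=p^i$, i.e.\ an $\A$-basis. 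For $n=1$ this already gives $\A=\Z C_p$, because the only $p$-power not exceeding $p-1$ is $1$, so every nonidentity element is a singleton basic set.

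For $n=2$, take the $\A$-subgroup $L=G_1$ of order $p$. Either all basic sets are singletons, so $\A=\Z C_p^2$, or some basic set $X$ lies outside $L$ and has size $p$. In the latter case a short count in $G/L\cong C_p$ (using that $\A_{G/L}=\Z C_p$ forces the image of $X$ to be a single coset) shows that $X$ is a full $L$-coset; since $\rad(X)$ is an $\A$-subgroup this forces every nonidentity $L$-coset to be a basic set, so $\A=\A_L\wr\A_{G/L}\cong\Z C_p\wr\Z C_p$, proving part~(2). For $n=3$ I would fix the $\A$-basis $(a_1,a_2,a_3)$ and set $L=G_1$, $U=G_2$. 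By part~(2) each of the $C_p^2$-sections $\A_U$ and $\A_{G/L}$ is $\Z C_p^2$ or $\Z C_p\wr\Z C_p$, and in particular cyclotomic. The classification then splits according to whether $\A$ is decomposable.

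If $\A$ is a nontrivial $S$-wreath product for some $\A$-section $S$, I would match the two admissible sections $\A_U$ and $\A_{G/L}$ along $S$: running through the possibilities produces exactly the four decomposable entries of Table~1, namely the ordinary wreath products $\Z C_p^2\wr\Z C_p$ and $\Z C_p\wr\Z C_p^2$ (rows~2 and~3), the iterated wreath product $\Z C_p\wr\Z C_p\wr\Z C_p$ (row~5), and the star (tensor) product $(\Z C_p\wr\Z C_p)\otimes\Z C_p$ (row~4). Each of these is cyclotomic: the relevant section S-rings are cyclotomic with equal induced groups on $S$, so Lemma~\ref{gwrcycl} applies to the wreath factors, while the tensor factor is cyclotomic over the product group.

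If $\A$ is indecomposable, then no basic set outside $U$ contains $L$ in its radical, and I would analyse how the two $C_p^2$-sections are glued along the common section $U/L$. Excluding every mixed gluing should leave only two outcomes: either all basic sets are singletons, giving $\Z C_p^3$ (row~1), or $G\setminus L$ is partitioned into the size-$p$ orbits of a single unipotent $\sigma\in\aut(G)$, which after adjusting the $\A$-basis is the full Jordan block of row~6, so $\A=\cyc(\langle\sigma\rangle,C_p^3)$; in both cases cyclotomicity is immediate. I expect this indecomposable case to be the main obstacle: one must rule out every non-wreath gluing of the sections, verify that the surviving partition is genuinely cyclotomic rather than merely schurian, and identify the multiplier group up to conjugacy as $\langle\sigma\rangle$. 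It is also here that the prime intervenes, explaining the exception in part~(3): when $p=2$ the unipotent $\sigma$ has order $4$, its large orbit coincides with a coset of $U$, and $\cyc(\langle\sigma\rangle,C_2^3)$ collapses onto the iterated wreath product of row~5, so row~6 does not occur separately.
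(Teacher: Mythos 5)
Your route is genuinely different from the paper's: the paper proves this lemma by pure citation (Hirasaka--Muzychuk's classification of schurian S-rings over $C_p^n$, $n\le 3$, for odd $p$, plus Spiga--Wang's theorem that every $p$-S-ring over $C_p^3$ is schurian, plus a COCO2P computation for $p=2$), whereas you attempt a self-contained structural proof. However, your proposal has two genuine gaps. The first is your fact~(ii): the statement ``a primitive permutation group containing a regular elementary abelian subgroup is of affine type'' is false (the natural action of $A_8$ is primitive and contains $C_2^3$ as a regular subgroup, but is almost simple, not affine), and even for affine primitive groups the point stabiliser need not have a suborbit of size $p^m-1$ (it acts irreducibly, not transitively, on nonzero vectors). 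Worse, the argument silently identifies primitivity of the S-ring $\A$ with primitivity of $\aut(\A)$; this identification is only legitimate for schurian S-rings, and schurity of $p$-S-rings over $C_p^3$ is exactly the nontrivial Spiga--Wang theorem, which you cannot assume. The imprimitivity you need is true, but should be proved the elementary way: the basic sets have $p$-power sizes and partition $G$, so the number of singleton basic sets is divisible by $p$; hence $\O_\theta(\A)$ is a nontrivial $\A$-subgroup, and either it is proper, or $\A=\Z G$, which is also imprimitive once $|G|>p$. With this repair, your treatment of $n\le 2$ and of the decomposable case for $n=3$ (enumerating the gluings and invoking Lemma~\ref{gwrcycl} via Cayley minimality of $\A_S$) can be made to work.

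The second gap is the decisive one: the indecomposable case for $n=3$ is announced, not proved. Your sentence ``excluding every mixed gluing should leave only two outcomes'' is precisely the hard content of the lemma. Knowing the two rank-$\le 2$ sections $\A_U$ and $\A_{G/G_1}$ does not determine $\A$ in the indecomposable situation: one must analyse the basic sets outside $U$ with trivial radical, show that they have size exactly $p$, and prove that the resulting partition is the orbit partition of a single unipotent automorphism (rather than some non-schurian or non-cyclotomic gluing). This is essentially the combined content of the Hirasaka--Muzychuk classification and the Spiga--Wang schurity theorem, each a substantial paper, and nothing in your sketch substitutes for it --- you yourself flag it as ``the main obstacle.'' (By contrast, your closing observation is correct and checkable: for $p=2$ the matrix $\sigma$ has order $4$, its orbits on $C_2^3$ have sizes $1,1,2,4$, and $\cyc(\langle\sigma\rangle,\,C_2^3)$ coincides with $\Z C_2\wr\Z C_2\wr\Z C_2$, which is why row no.~6 disappears from part~(3) when $p=2$.) As it stands, the proposal is an outline whose hardest step is missing; to complete it you must either reproduce those classification arguments or, as the paper does, cite them.
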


\begin{proof}
If $p$ is odd, then 
Hirasaka and Muzcyhuk~\cite{HM} classified the schurian S-rings 
over $C_p^n$, $n \le 3$, and it was proved later by Spiga and Wang~\cite{SW} that all 
$p$-S-rings over $C_p^3$ are schurian. 
If $p=2$, then the statement of the lemma can be verified with the 
help of the GAP package COCO2P~\cite{GAP}.
\end{proof}

For later use we set $\D_p=\cyc(\langle \sigma \rangle,\, C_p^3)$, 
the S-ring in row no.~6 in Table~1. 

The following two lemmas for an odd prime $p$ are \cite[Lemma~5.5]{KR} 
and \cite[Lemma~5.6]{KR}, respectively. Their proofs are also valid for $p=2$. 

\begin{lemm}\label{p3caymin}
{\rm (cf.\ \cite[Lemma~5.5]{KR})}
With the above notations, let  $n \leq 3$. Then $\A$ is Cayley minimal except for the case when $n=3$ and 
$\A \cong \Z C_p \wr \Z C_ p\wr \Z C_p$ (the S-ring no.~5 in Table~1).
\end{lemm}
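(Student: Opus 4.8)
The plan is to run the classification of Lemma~\ref{p3}. For $n\le 3$ every $p$-S-ring $\A$ over $G$ is cyclotomic, so $\A=\cyc(M,G)$ with $M=\aut_G(\A)$, and up to conjugacy in $\aut(G)$ the group $M$ is the automorphism group of one of the explicit S-rings of Lemma~\ref{p3} (rows nos.~1--6 of Table~1 when $n=3$, row no.~6 occurring only for odd $p$). By definition $\A$ is Cayley minimal exactly when $M$ is the unique subgroup of $\aut(G)$ whose orbits on $G$ constitute $\cS(\A)$; and since any such subgroup is automatically contained in the maximal one $M=\aut_G(\A)$, the task reduces to showing that no proper subgroup of $M$ realizes the same orbits. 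As this property is invariant under conjugation in $\aut(G)$, it suffices to settle each representative. The group rings $\Z C_p$, $\Z C_p^2$ and $\Z C_p^3$ (no.~1) are immediate: all basic sets are singletons, so $M$ is trivial and the trivial group is visibly the only subgroup of $\aut(G)$ fixing every point.

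For the wreath and tensor products (nos.~2,~3,~4) the group $M=\aut_G(\A)$ can be read off directly: an automorphism preserves $\cS(\A)$ setwise iff it fixes $\O_\theta(\A)$ pointwise and maps each coset occurring as a basic set to itself, which confines $M$ to an explicit unipotent group. What remains is to check that no proper subgroup $M'<M$ has orbits $\cS(\A)$; here $M'$ still fixes $\O_\theta(\A)$ pointwise, so everything comes down to transitivity on the non-singleton basic sets. For nos.~2 and~4 there is a single type of such set --- a coset on which the orbit of one representative already exhausts the coset --- whence transitivity forces $M'$ to realize every admissible transvection, giving $M'=M$ (here $M\cong C_p^2$ and $M\cong C_p$, respectively). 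Case no.~3 is the subtle one among these: the non-singleton basic sets are the cosets $a_2^sa_3^t\langle a_1\rangle$ with $(s,t)\ne(0,0)$, on which $M\cong C_p^2$ acts through the pairing $(i,j)\mapsto is+jt$; any proper subgroup of order $p$ is annihilated by some direction $(s,t)\ne(0,0)$ and hence fails to be transitive on the corresponding coset, so again $M'=M$.

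The cyclotomic S-ring $\D_p$ (no.~6, $p$ odd) is different, because its basic sets are not all cosets, so $\aut_G(\A)$ is not obvious. Here $\sigma$ is a single unipotent Jordan block, so for a suitable basis $e_1,e_2,e_3$ the orbit of $e_1$ is $\langle e_1\rangle$ pointwise, the orbit of $e_2$ is the line $e_2+\langle e_1\rangle$, and the orbit of $e_3$ is the ``parabola'' $\{\,e_3+ke_2+\binom{k}{2}e_1 : k\in\Z_p\,\}$. Since the orbit of $e_2$ lies in $e_2+\langle e_1\rangle$ and that of $e_3$ in $e_3+\langle e_1,e_2\rangle$, any group with these orbits is unitriangular in this basis; imposing in addition that it carry the parabola into itself pins the two superdiagonal entries to a common value $a$ and the corner entry to $\binom{a}{2}$. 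This forces the group inside $\langle\sigma\rangle$, so $\aut_G(\A)=\langle\sigma\rangle$ has order $p$, and Cayley minimality is then automatic: the only proper subgroup is trivial, which does not give the orbits $\cS(\D_p)$. The computation uses $p$ odd, consistent with $\D_p$ being absent when $p=2$.

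Finally, the single exception, no.~5, $\A=\Z C_p\wr\Z C_p\wr\Z C_p$, is exhibited as non-minimal. With $\O_\theta(\A)=\langle a_1\rangle$ and chain $\langle a_1\rangle<\langle a_1,a_2\rangle<G$, the group $M=\aut_G(\A)$ is the Heisenberg group of order $p^3$ of the maps $a_1\mapsto a_1$, $a_2\mapsto a_1^{i}a_2$, $a_3\mapsto a_1^{j}a_2^{k}a_3$. Fixing any $c\ne 0$, the index-$p$ subgroup cut out by $k=ci$ still realizes every value of $i$ (hence is transitive on each coset $a_2^s\langle a_1\rangle$) and still realizes every pair $(j,k)=(j,ci)$ (hence the orbit of each $a_3^t$ is the whole coset $a_3^t\langle a_1,a_2\rangle$); so it has precisely the orbits $\cS(\A)$, and $\A$ is not Cayley minimal. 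I expect the main obstacle to lie in this uniqueness direction: one must determine $\aut_G(\A)$ for $\D_p$ by the parabola rigidity above, and, dually, see why in no.~5 the shared parameter $i$ --- which simultaneously moves $a_2$ inside its coset and supplies the $a_2$-component of the image of $a_3$ --- leaves enough slack for a proper subgroup with identical orbits, a feature created precisely by the second, iterated layer of the wreath product.
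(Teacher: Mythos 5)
Your proposal is correct, but it takes a different route from the paper for the simple reason that the paper contains no proof of this lemma: the statement is imported from \cite[Lemma~5.5]{KR} (proved there for odd $p$), together with the one-line remark that the proof given in \cite{KR} remains valid for $p=2$. You instead re-derive the result from the classification in Lemma~\ref{p3}, and the substantive computations all check out: in rows nos.~1--4 of Table~1 the group $\aut_G(\A)$ is the explicit unipotent group you describe, and your transitivity arguments (in particular the annihilator argument in row no.~3, where a proper subgroup of order $p$ is killed by a direction $(s,t)$ with $us+vt=0$) show that no proper subgroup has the same orbits; in row no.~6 your rigidity computation does give $\aut_G(\D_p)=\langle\sigma\rangle$ --- and indeed preserving the single parabola through $e_3$ already forces the two superdiagonal entries to coincide, as one sees from the image of the point $e_3+e_2=\sigma(e_3)$ --- so minimality is automatic there; and in row no.~5 the set $\{(i,j,k):k=ci\}$, $c\neq 0$, is indeed closed under the Heisenberg product (the twisting term affects only the $j$-coordinate) and has the same orbits as the full group of order $p^3$, so that S-ring fails Cayley minimality. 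What your version buys is a self-contained argument treating $p$ odd and $p=2$ uniformly; what the paper's citation buys is brevity.

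One step needs more justification than you give it. Lemma~\ref{p3} classifies the $p$-S-rings over $C_p^3$ only up to combinatorial isomorphism, whereas your reduction to representatives (``this property is invariant under conjugation in $\aut(G)$'') requires the classification up to conjugacy in $\aut(G)$, i.e.\ up to Cayley isomorphism; Cayley minimality is not manifestly invariant under combinatorial isomorphism. For rows nos.~1--5 the gap closes at once: those S-rings are determined by a flag of $\A$-subgroups together with the requirement that the non-singleton basic sets be cosets, and isomorphisms fixing $e$ preserve $\A$-subgroups, thin radicals and radicals of basic sets, so any S-ring isomorphic to one of these rows literally equals the same construction on a conjugate flag. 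For row no.~6 an extra argument is needed: either normalize directly (if $\A\cong\D_p$, then $\A$ is cyclotomic and has a size-$p$ basic set $\{e_3+ke_2+g(k)e_1: k\in\{0,\dots,p-1\}\}$ outside $A_2$ with $g(0)=0$; transitivity of $\aut_G(\A)$ on it forces the recurrence $g(k+1)-g(k)=ka+g(1)$ for a fixed $a\neq 0$, i.e.\ $g(k)=a\binom{k}{2}+g(1)k$, which a change of basis turns into the standard parabola), or invoke that $p$-S-rings over $C_p^3$ are schurian \cite{SW} and that schurian S-rings over $C_p^3$ are $\CI$ (Remark~\ref{cisrings}), which upgrades a combinatorial isomorphism between two of them to a Cayley isomorphism. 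With that supplement your proof is complete.
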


\begin{lemm}\label{p3notcaymin}
{\rm (cf.\ \cite[Lemma~5.6]{KR})}
With the above notations, let $n=3$ and 
$\A \cong \Z C_p \wr \Z C_p \wr \Z C_p$. Then $|\aut_G(\A)|=p^3$.
\end{lemm}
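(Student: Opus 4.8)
The plan is to compute $\aut_G(\A)=\aut(\A)\cap\aut(G)$ by hand, using the fact that $\aut_G(\A)$ consists of exactly those group automorphisms of $G$ that fix every basic set of $\A$ setwise. Indeed, for $\varphi\in\aut(G)$ and any $X\subseteq G$ one has $R(X)^\varphi=R(X^\varphi)$, so $\varphi\in\aut(\A)$ if and only if $X^\varphi=X$ for every $X\in\cS(\A)$. Viewing the elementary abelian group $G$ as a $3$-dimensional vector space over $\mathbb{F}_p$ identifies $\aut(G)$ with $\mathrm{GL}(3,p)$, and reduces the whole problem to finding all linear maps preserving each basic set of the iterated wreath product.

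Next I would make the basic sets explicit. The S-ring $\Z C_p\wr\Z C_p\wr\Z C_p$ carries a complete flag of $\A$-subgroups $\{e\}<L_1<L_2<G$ with $|L_i|=p^i$; unwinding the ordinary wreath product one layer at a time shows that its basic sets are the $p$ singletons inside $L_1$, the $p-1$ cosets of $L_1$ contained in $L_2\setminus L_1$, and the $p-1$ cosets of $L_2$ contained in $G\setminus L_2$. In particular $\O_\theta(\A)=L_1$, in agreement with row no.~5 of Table~1, and the count $p\cdot 1+(p-1)p+(p-1)p^2=p^3$ confirms that these sets partition $G$.

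Then I would impose the invariance conditions on $\varphi\in\mathrm{GL}(3,p)$. Fixing each singleton of $L_1$ forces $\varphi$ to fix $L_1$ pointwise; preserving each $L_1$-coset inside $L_2$ forces $\varphi$ to act trivially on $L_2/L_1$; and preserving each $L_2$-coset forces $\varphi$ to act trivially on $G/L_2$. Choosing a basis $e_1,e_2,e_3$ adapted to the flag, these three conditions say exactly that the matrix of $\varphi$ is upper unitriangular, with its three superdiagonal entries free. Conversely, for such $\varphi$ a one-line substitution shows that the displacement $\varphi(v)-v$ always lands in the relevant term of the flag, so each individual basic set is genuinely preserved. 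Hence $\aut_G(\A)$ is precisely the group of upper unitriangular $3\times 3$ matrices over $\mathbb{F}_p$, and $|\aut_G(\A)|=p^3$.

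The argument is essentially bookkeeping, so there is no deep obstacle. The two points that require genuine care are the sufficiency direction --- confirming that the unitriangular conditions force each individual basic set, and not merely the flag, to be fixed --- and the preliminary unwinding of the iterated wreath product, which must be done carefully enough to be certain the basic sets are exactly the flag cosets listed above (and, in passing, that the two possible associations of the triple wreath product yield the same S-ring).
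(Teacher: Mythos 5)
Your proof is correct. Note, however, that the paper contains no in-text argument for this lemma at all: it is quoted from \cite[Lemma~5.6]{KR} (stated there for odd $p$), together with the one-line remark that the proof given there remains valid for $p=2$. Your computation is therefore a self-contained substitute for the citation, and it checks out at every step: the reduction $\varphi\in\aut_G(\A)\Leftrightarrow X^\varphi=X$ for all $X\in\cS(\A)$ is the standard identity $R(X)^\varphi=R(X^\varphi)$; unwinding the iterated wreath product (under either association) does give exactly the basic sets you list, namely the $p$ singletons in $L_1$, the $p-1$ cosets of $L_1$ in $L_2\setminus L_1$, and the $p-1$ cosets of $L_2$ outside $L_2$; invariance of these sets forces $\varphi$ to fix $L_1$ pointwise and to act trivially on $L_2/L_1$ and $G/L_2$, i.e.\ to be unitriangular with respect to a flag-adapted basis; and your converse displacement argument ($\varphi(v)-v$ lies in the relevant flag term) correctly shows that every individual basic set, not merely the flag, is preserved, giving $|\aut_G(\A)|=|\mathrm{UT}(3,p)|=p^3$. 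A side benefit of your route is that it is visibly uniform in the prime $p$, so the case $p=2$ needs no separate justification --- which is precisely the point the paper has to address when importing the odd-$p$ statement from \cite{KR}.
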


\begin{lemm}\label{p4minimal}
With the above notations, let $n=4$ 
and suppose that $\A$ is indecomposable and schurian. Then $\A$ is $2$-minimal.
\end{lemm}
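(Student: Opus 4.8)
The plan is to reformulate $2$-minimality as a rigidity property of the point stabiliser $M_0:=\aut(\A)_e$, and then to argue by contradiction: a failure of $2$-minimality will be shown to force a nontrivial generalised wreath decomposition of $\A$, which is impossible since $\A$ is indecomposable.

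First I would record the reformulation. Write $M=\aut(\A)$, so that $\A=V(M,G)$ since $\A$ is schurian. If $K\in\Sup(G_\r)$ satisfies $K\approx_2\aut(\A)$, then $V(K,G)=\A$ and hence $K\le K^{(2)}=M$; since $G_\r\le K$ is transitive, $K=G_\r K_e$ with $K_e\le M_0$ and $\orb(K_e,G)=\orb(M_0,G)$, and $K=M$ exactly when $K_e=M_0$. Thus $\A$ is $2$-minimal if and only if $M_0$ admits no proper subgroup $K_0$ with $\orb(K_0,G)=\orb(M_0,G)$. As $\A$ is a $p$-S-ring, the group $M$, and with it $M_0$, is a $p$-group (a standard property of $p$-S-rings over $p$-groups); consequently it is enough to rule out a normal subgroup $K_0\lhd M_0$ of index $p$ with $\orb(K_0,G)=\orb(M_0,G)$, because any proper equi-orbit subgroup is contained in a maximal (hence normal, index $p$) subgroup whose orbits, lying between $\orb(K_0,G)$ and $\orb(M_0,G)$, again equal $\orb(M_0,G)$.

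Suppose, for contradiction, that such a $K_0$ exists, and fix $m\in M_0\setminus K_0$. Every element of $M_0$ fixes each basic set of $\A$ setwise, so $m$ is a symmetry of $\A$ lying outside $K_0$ that enlarges no $K_0$-orbit. I would use this ``hidden'' symmetry to manufacture an $\A$-section $S=U/L$ with $L\neq\{e\}$ and $U\neq G$ for which $L\le\rad(X)$ holds for every basic set $X$ outside $U$, i.e.\ for which $\A=\A_U\wr_S\A_{G/L}$. The candidate subgroups $L\le U$ would be extracted from the discrepancy between $M_0$ and $K_0$: from the $\A$-subgroups generated by the points that $m$ moves within their $K_0$-orbits, combined with the radicals $\rad(X)$ of the basic sets, after which one checks the wreath condition directly.

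The rank hypothesis enters in the verification. Every proper $\A$-section of $G\cong C_p^4$ has rank at most $3$, and the induced S-rings $\A_U$, $\A_{G/L}$ and $\A_S$ are themselves $p$-S-rings; hence by Lemma~\ref{p3} they are completely determined, by Lemma~\ref{p3caymin} they are Cayley minimal except when isomorphic to $\Z C_p\wr\Z C_p\wr\Z C_p$, and Lemma~\ref{p3notcaymin} governs that last case. Tracking through this short list how $m$ can act nontrivially on one section while fixing all basic sets pins down the common radical $L$ below $U$. I expect the main obstacle to be precisely this construction: passing from the bare existence of an index-$p$ equi-orbit subgroup to a genuine generalised wreath decomposition requires controlling the $p$-power orbit sizes across all basic sets simultaneously and certifying that $L$ and $U$ are honest $\A$-subgroups with $L\le\rad(X)$ uniformly outside $U$. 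Once the decomposition is produced, indecomposability is contradicted, so no such $K_0$ exists and $\A$ is $2$-minimal.
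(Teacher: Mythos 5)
Your proposal contains a genuine gap: the entire mathematical content of the lemma is the implication ``not $2$-minimal $\Rightarrow$ decomposable'' (the contrapositive of the statement), and your argument never actually establishes it. Your opening reformulation is fine: failure of $2$-minimality does yield a proper subgroup $K_e < \aut(\A)_e$ with $\orb(K_e,G)=\orb(\aut(\A)_e,G)$, and the reduction to an index-$p$ such subgroup is legitimate once one knows $\aut(\A)$ is a $p$-group (this is indeed provable for schurian $p$-S-rings via a Sylow argument combined with Wielandt's theorem that the $2$-closure of a $p$-group is a $p$-group, though you should justify it rather than call it standard). But from that point on, the proof consists of intentions: you say you would ``manufacture'' an $\A$-section $S=U/L$ from ``the $\A$-subgroups generated by the points that $m$ moves,'' and then ``check the wreath condition directly,'' and you yourself concede that ``the main obstacle'' is precisely this construction. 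Nothing in the proposal indicates how an index-$p$ equi-orbit subgroup produces honest $\A$-subgroups $L \le U$ with $L \le \rad(X)$ for every basic set $X$ outside $U$; this requires a delicate structural analysis of $p$-S-rings over $C_p^4$ and is a substantial theorem, not a verification.

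For comparison, the paper does not prove this implication from scratch either: for odd $p$ it invokes Theorem~4.1 of Feng and Kov\'acs \cite{FK} (where exactly this statement is established by a lengthy case analysis), and for $p=2$ it is checked by an exhaustive computation with the GAP package COCO2P, since \cite{FK} covers only odd primes and the classification in Table~1 itself differs at $p=2$ (the S-ring $\D_p$ does not occur). Your sketch, by treating all primes uniformly through Lemmas~\ref{p3}, \ref{p3caymin} and \ref{p3notcaymin}, also glosses over this parity distinction. To turn the proposal into a proof you would either have to cite \cite[Theorem~4.1]{FK} and handle $p=2$ separately, or reproduce the core of the Feng--Kov\'acs argument, which is far beyond what is written here.
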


\begin{proof}
The statement of the lemma for an odd prime $p$ follows from~\cite[Theorem~4.1]{FK}. If $p=2$, then using the GAP package COCO2P~\cite{GAP}, one can find all indecomposable $p$-S-rings over $G$ (there are 
four of them up to isomorphism), and check that their automorphism groups 
do not contain a $2$-equivalent proper subgroup. So the statement of the lemma also holds for $p=2$. 
\end{proof}

\begin{lemm}\label{p4decomp}
With the above notations, let $n=4$ and suppose that $\A$ is decomposable. Then $\A$ is cyclotomic.
\end{lemm}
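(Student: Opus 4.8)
The plan is to realise $\A$ as a cyclotomic S-ring by applying Lemma~\ref{gwrcycl} to a decomposition that witnesses the decomposability of $\A$. Since $\A$ is decomposable, there is an $\A$-section $S=U/L$ with $L\neq\{e\}$ and $U\neq G$ for which $\A=\A_U\wr_S\A_{G/L}$ is the nontrivial $S$-wreath product. The first step is to record what $n=4$ forces on the ranks: as $U$ is a proper subgroup of $G$ we have $\rk(U)\leq 3$; as $L\neq\{e\}$ we have $\rk(G/L)=4-\rk(L)\leq 3$; and combining $\rk(U)\leq 3$ with $\rk(L)\geq 1$ gives the crucial bound $\rk(S)=\rk(U)-\rk(L)\leq 2$.

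Next I would check that the relevant section S-rings are again $p$-S-rings. The S-ring $\A_U$ is a $p$-S-ring because its basic sets are precisely the basic sets of $\A$ contained in $U$. For a quotient, if $X\in\cS(\A)$ and $\pi\colon G\to G/L$ is the canonical epimorphism, then $\pi(X)$ is a basic set of $\A_{G/L}$ and all nonempty fibres of $\pi|_X$ share a common size $c$, namely the constant coefficient of $\underline{L}\,\underline{X}$ on $X$ (for $g\in X$ this coefficient equals $|X\cap Lg|$, and it is constant on the basic set $X$ since $\underline{L}\,\underline{X}\in\A$). Hence $|X|=c\,|\pi(X)|$, and as $|X|$ is a power of $p$ so are both factors; thus $\A_{G/L}$ and, by the same reasoning, $\A_S$ are $p$-S-rings. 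By Lemma~\ref{p3} the S-rings $\A_U$ and $\A_{G/L}$ are therefore cyclotomic, say $\A_U=\cyc(K_1,U)$ with $K_1\leq\aut(U)$ and $\A_{G/L}=\cyc(K_2,G/L)$ with $K_2\leq\aut(G/L)$.

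It then remains to secure the hypothesis $K_1^S=K_2^S$ of Lemma~\ref{gwrcycl}. Restricting to $S$ gives $\A_S=(\A_U)_S=\cyc(K_1^S,S)$ and $\A_S=(\A_{G/L})_S=\cyc(K_2^S,S)$, so both $K_1^S$ and $K_2^S$ are Cayley equivalent to $\aut_S(\A_S)$. Here the rank bound pays off: $\A_S$ is a $p$-S-ring over a group of rank at most $2$, hence it lies outside the single exceptional case of Lemma~\ref{p3caymin} (which requires rank $3$ and $\A_S\cong\Z C_p\wr\Z C_p\wr\Z C_p$), and so $\A_S$ is Cayley minimal. Cayley minimality then forces $K_1^S=\aut_S(\A_S)=K_2^S$, and Lemma~\ref{gwrcycl} delivers that $\A$ is cyclotomic.

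The main obstacle is exactly this Cayley minimality of $\A_S$: Lemma~\ref{gwrcycl} cannot be applied for an arbitrary choice of the representing groups $K_1$ and $K_2$, and there is genuinely one rank-$3$ $p$-S-ring, namely $\Z C_p\wr\Z C_p\wr\Z C_p$, for which the two restrictions need not agree. What rescues the argument is that decomposability simultaneously rules out $L=\{e\}$ and $U=G$, squeezing $\rk(S)$ down to at most $2$ and thereby excluding this obstruction. I would therefore present the rank computation as the heart of the proof.
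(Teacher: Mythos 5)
Your proof is correct and follows essentially the same route as the paper: bound $|S|\leq p^2$ from nontriviality of the wreath decomposition, invoke Lemma~\ref{p3} for cyclotomicity of $\A_U$ and $\A_{G/L}$, use Cayley minimality of $\A_S$ from Lemma~\ref{p3caymin} to force the restrictions to $S$ to coincide, and finish with Lemma~\ref{gwrcycl}. The only cosmetic difference is that the paper fixes the canonical representing groups $\aut_U(\A_U)$ and $\aut_{G/L}(\A_{G/L})$ at the outset, whereas you start from arbitrary $K_1$, $K_2$ and let Cayley minimality identify both restrictions with $\aut_S(\A_S)$ --- the same mechanism.
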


\begin{proof}
Let $\A$ be the nontrivial $S$-wreath product for some 
$\A$-section $S=U/L$. Clearly, $|U|,|L| \in \{p,p^2,p^3\}$ and 
$|S| \in \{1,p,p^2\}$. Lemma~\ref{p3} implies that the S-rings 
$\A_U$, $\A_{G/L}$ and $\A_S$ are cyclotomic. 
So $\A_U=\cyc(\aut_U(\A_U),\, U)$, $\A_{G/L}=\cyc(\aut_{G/L}(\A_{G/L}),\, G/L)$, and $\A_S=\cyc(\aut_S(\A_S),\, S)$. According to Lemma~\ref{p3caymin}, the S-ring $\A_S$ is Cayley minimal. 
Therefore, $\aut_U(\A_U)^S=\aut_S(\A_S)=\aut_{G/L}(\A_{G/L})^S$. Now, Lemma~\ref{gwrcycl} yields that $\A$ is cyclotomic. 
The lemma is proved.
\end{proof}

\begin{lemm}\label{2srings}
With the above notations, let $p=2$ and $n \leq 4$. Then $\A$ is a 
$\CI$-S-ring.
\end{lemm}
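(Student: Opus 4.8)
The plan is to argue by induction on $n$. Throughout I would use that every $\A$-section of a $2$-S-ring is again a $2$-S-ring over an elementary abelian $2$-group (the same fact is used implicitly in the proof of Lemma~\ref{p4decomp}, where Lemma~\ref{p3} is applied to $\A_U$, $\A_{G/L}$ and $\A_S$), so that the inductive hypothesis applies to $\A_U$, $\A_{G/L}$ and $\A_S$ for any proper $\A$-section $S=U/L$. The base case $n\le 1$ is immediate, since then $\A=\Z C_2$ is the group ring, which is a $\CI$-S-ring. For the inductive step I would split into cases according to the structure of $\A$.

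Suppose first that $\A$ is decomposable, say $\A=\A_U\wr_S\A_{G/L}$ is a nontrivial $S$-wreath product with $S=U/L$, so $\{e\}\ne L$ and $U\ne G$. Then $U$ and $G/L$ are elementary abelian $2$-groups of rank at most $3$, so $\A_U$ and $\A_{G/L}$ are $2$-S-rings that are $\CI$-S-rings by the inductive hypothesis and cyclotomic by Lemma~\ref{p3}. Moreover $\A_S$ is a $2$-S-ring over a group of rank $\rk(U)-\rk(L)\le 3-1=2$, hence Cayley minimal by Lemma~\ref{p3caymin}. Therefore Lemma~\ref{cicayleymin} yields that $\A$ is a $\CI$-S-ring; in the degenerate case $U=L$ the same conclusion follows from the remark after Lemma~\ref{cigwr}, since then $\aut_S(\A_S)$ is trivial.

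Next suppose that $\A$ is a nontrivial tensor product $\A=\A_{G_1}\otimes\A_{G_2}$ with $G=G_1\times G_2$; this is the only remaining type of star product, a star product with nontrivial intersection of its factors being a wreath product. Then $\A_{G_1}$ and $\A_{G_2}$ are $2$-S-rings over elementary abelian $2$-groups of rank at most $3$, hence cyclotomic by Lemma~\ref{p3} (so that $\A$ is schurian) and $\CI$-S-rings by the inductive hypothesis. Consequently Lemma~\ref{cistar}, applied with $V=G_1$ and $W=G_2$ so that $V\cap W=\{e\}$, shows that $\A$ is a $\CI$-S-ring. This already finishes the case $n\le 3$: by Lemma~\ref{p3} every $2$-S-ring over $C_2^n$ with $n\le 3$ is one of the S-rings in rows~1--5 of Table~1, and each of these is either a nontrivial wreath product or a tensor product, so it has been settled above. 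In particular the group ring $\Z C_2^n$, being a tensor product of copies of $\Z C_2$, is a $\CI$-S-ring.

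It remains to treat the S-rings $\A$ over $C_2^4$ that are neither nontrivial wreath products nor tensor products. By Lemma~\ref{p4minimal} every such (indecomposable) $\A$ is $2$-minimal, but the structural reductions above no longer apply. I would therefore finish by the same kind of finite computation used for $p=2$ in the proofs of Lemmas~\ref{p3} and~\ref{p4minimal}: enumerate with COCO2P the indecomposable $2$-S-rings over $C_2^4$ (there are four of them, from the proof of Lemma~\ref{p4minimal}) and verify, for each one that is not a tensor product, that all regular subgroups of $\aut(\A)$ isomorphic to $C_2^4$ are conjugate in $\aut(\A)$, i.e.\ that $\aut(\A)$ is $G$-transjugate; by Lemma~\ref{trans} this is equivalent to $\A$ being a $\CI$-S-ring. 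This last step is the main obstacle: for the genuinely indecomposable S-rings over $C_2^4$ the $\CI$ property does not reduce to proper sections and must be established directly, either by the computer check above or by exhibiting enough normality of $G_\r$ in $\aut(\A)$ to run a transjugacy argument in the spirit of Lemma~\ref{minnorm}.
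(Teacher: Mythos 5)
Your strategy is essentially the paper's own: the decomposable case is handled exactly as in the paper (one factor cyclotomic by Lemma~\ref{p3}, the section $\A_S$ Cayley minimal by Lemma~\ref{p3caymin}, conclude by Lemma~\ref{cicayleymin}), for $n\le 3$ the indecomposable S-rings reduce to group rings, and the genuinely indecomposable S-rings over $C_2^4$ are dispatched by a finite COCO2P computation. The paper organizes that last step a little differently: it verifies for a single S-ring $\A_1$ the stronger but more easily checkable condition that $G_\r$ is the \emph{unique} regular subgroup of $\aut(\A_1)$ isomorphic to $G$ (with $|\aut(\A_1)|=32$), and then transfers the $\CI$-property to the remaining two indecomposable S-rings via Lemma~\ref{minnorm}, rather than checking transjugacy separately for all three; this is a difference of computational bookkeeping, not of ideas.

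There is, however, one genuine gap in your final step: schurianity. Lemma~\ref{trans} applies only to S-rings of the form $V(K,G)$ with $K\in\Sup_2(G_\r)$, i.e.\ to \emph{schurian} S-rings, whereas the lemma being proved concerns an arbitrary $2$-S-ring over $C_2^n$, $n\le 4$, which is not a priori schurian. For a non-schurian $\A$, the $G$-transjugacy of $\aut(\A)$ would only show that $V(\aut(\A),G)$ --- an S-ring that may properly contain $\A$ --- is a $\CI$-S-ring, and says nothing about $\A$ itself; your fallback of an ``argument in the spirit of Lemma~\ref{minnorm}'' suffers from the same problem, since Lemma~\ref{minnorm} also assumes $\A$ schurian. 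You do secure schurianity in the tensor-product case (via cyclotomicity of the factors), but not for the three indecomposable non-group-ring S-rings over $C_2^4$, which is precisely where it is needed. This is why the paper's proof opens with the sentence that $\A$ is schurian by \cite[Theorem~1.2]{EKP}; adding that citation (or verifying schurianity of the enumerated S-rings as part of the COCO2P computation) closes the gap and makes your argument complete.
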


\begin{proof}
The S-ring $\A$ is schurian by~\cite[Theorem~1.2]{EKP}. 

Suppose first  that $\A$ is decomposable, that is, $\A$ is the nontrivial $S$-wreath product for some $\A$-section $S$ with $|S|\leq p^2$. 
Then $\A$ is cyclotomic by Lemma~\ref{p3} if $n\leq 3$ and by Lemma~\ref{p4decomp} if $n=4$. The S-ring $\A_S$ is Cayley minimal by Lemma~\ref{p3caymin}. So $\A$ is a $\CI$-S-ring by Lemma~\ref{cicayleymin}.

Now, suppose that $\A$ is indecomposable. If $n\leq 3$ then $\A=\mathbb{Z}G$ by Lemma~\ref{p3}, which is obviously a $\CI$-S-ring. Let $n=4$. 
A computation with COCO2P~\cite{GAP} shows that, up to isomorphism, there  
are three indecomposable S-rings over $G$ distinct from $\Z G$. 
One of these S-rings, say $\A_1$, satisfies the conditions $|\aut(\A_1)|=32$ and $G_\r$ is the unique regular subgroup of $\aut(\A_1)$, which is isomorphic to $G$. Since $\A_1$ is schurian, we conclude that $\A_1$ is a $\CI$-S-ring by Lemma~\ref{trans}. The $\CI$-property of the other two indecomposable S-rings follows from the $\CI$-property of $\A_1$ and Lemma~\ref{minnorm}. 
The lemma is proved.
\end{proof}

We conclude this section with a particular S-ring over $C_p^4$, which will appear in our proof of Theorem~\ref{main} in Section~7.

\begin{lemm}\label{p3exceptcase}
With the above notations, let $n=4$ and suppose that 
$\A$ is decomposable, $\A_U \cong \Z C_p \wr \Z C_p \wr \Z C_p$ 
for an $\A$-subgroup $U$, and $\A$ has a basic set outside $U$ 
with trivial radical. 
Then $\aut_G(\A)^U=\aut_U(\A_U)$, unless $p > 2$ and 
there exist $\A$-subgroups $U_1$ and $U_2$ of $G$ of order $p^3$ such 
that 
\begin{equation}\label{eq:U12}
\A_{U_1} \cong \D_p~\text{and}~
\A_{U_2} \cong \Z C_p \wr \Z C_p^2.
\end{equation}
Moreover, each basic set of $\A$ 
outside $U_1 \cup U_2$ is a $V$-coset, 
where $V$ is the unique $\A$-subgroup of $G$ of order $p^2$ 
contained in $U$. 
\end{lemm}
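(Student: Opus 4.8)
The plan is to use that a decomposable $p$-S-ring over $C_p^4$ is cyclotomic by Lemma~\ref{p4decomp}, so that we may write $\A=\cyc(M,G)$ with $M=\aut_G(\A)$ and reduce the statement to a computation of $M^U=\{\varphi^U:\varphi\in M\}\le\aut_U(\A_U)$. First I would fix a basis adapted to the flag $\langle a\rangle<V<U<G$, where $\langle a\rangle=\O_\theta(\A_U)$, $V=\langle a,b\rangle$ is the unique $\A_U$-subgroup of order $p^2$, and $U=\langle a,b,c\rangle$. Since each element of $\langle a\rangle$ is a singleton basic set, since $V,U$ are $\A$-subgroups, and since $\A_{G/U}\cong\Z C_p$, every $\varphi\in M$ fixes $a$ and sends $b\mapsto a^sb$, $c\mapsto a^ub^vc$, $d\mapsto a^wb^xc^yd$; hence $M$ consists of unitriangular matrices, and $\rho\colon\varphi\mapsto\varphi^U$ is a homomorphism onto $M^U$, with $\aut_U(\A_U)$ the Heisenberg group $H=\{(s,u,v)\}$ of order $p^3$.

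Next I would determine which subgroups $K\le H$ satisfy $\cyc(K,U)\cong\Z C_p\wr\Z C_p\wr\Z C_p$. This requires the projections of $K$ onto the $s$-coordinate and onto the $(u,v)$-coordinates to be surjective, and a short calculation in $H$ shows that the only such $K$ are $H$ itself and the index-$p$ subgroups $K_\mu=\{(\mu v,u,v):u,v\in\mathbb{F}_p\}$ with $\mu\neq0$. If $M^U=H$ the conclusion holds, so the entire difficulty is the case $M^U=K_\mu$, i.e.\ the relation $s=\mu v$ holds identically on $M$.

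Assume $M^U=K_\mu$. The hypothesis gives a basic set $X_0$ outside $U$ with $\rad(X_0)=\{e\}$; as $M$ fixes $G/U$ pointwise, $X_0$ lies in a single coset $Ud^{k_0}$, and any $\nu\in\ker\rho$ left-translates $X_0$ by a fixed power of $\nu(d)d^{-1}$, so $\rad(X_0)=\{e\}$ forces $\ker\rho=\{e\}$ and $M\cong K_\mu$. I would then invoke decomposability: the defining section $U'/L'$ satisfies $L'\le\rad(X)$ for every basic set $X$ outside $U'$, so $X_0\subseteq U'$ (otherwise $L'\le\rad(X_0)=\{e\}$), whence $U'\neq U$ and $U'\not\le U$. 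Every nonzero $M$-invariant subspace contains a fixed vector, and $\langle a\rangle$ is the only $M$-invariant line, so $\langle a\rangle\le U'$ and $|U'|\in\{p^2,p^3\}$; in either case the presence of an $M$-invariant subspace meeting $G\setminus U$ forces, via the computation of the $M$-fixed functionals under $s=\mu v$, that $y\equiv0$ on $M$ (equivalently $\A_{G/V}\cong\Z C_p^2$). The fixed functionals then form the pencil $\gamma c^{\ast}+\delta d^{\ast}$, producing exactly the $p+1$ order-$p^3$ $\A$-subgroups $W_t=\langle V,c^td\rangle$ ($t\in\mathbb{F}_p$) and $U$ through $V$. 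Solving the $2$-cocycle condition coming from the group law for the last column (with $\ker\rho=\{e\}$ and $y\equiv0$) yields $x=\beta_2v$ and a quadratic dependence $w=\gamma_1u+\gamma_2v+\tfrac{\mu(\beta_2-\gamma_1)}{2}v^2$ whose leading coefficient already forces $p$ to be odd; after replacing $d$ by $c^{-\gamma_1}d$ I may take $\gamma_1=0$, so the orbit of $d$ is the parabola $\{a^{\gamma_2v+\eta v^2}b^{\beta_2v}d:v\in\mathbb{F}_p\}$ with $\eta=\mu\beta_2/2$. A direct check shows that every orbit outside $U$ with nonzero $c$-coordinate has radical containing $\langle a\rangle$ or is a full $V$-coset, so the trivial-radical hypothesis is equivalent to $\eta\ne0$, i.e.\ to $\beta_2\ne0$ and $p>2$; in particular $M^U=K_\mu$ is impossible when $p=2$.

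Finally, with $p>2$ and $\beta_2\ne0$ I would restrict $M$ to the members of the pencil and compute orbits: this identifies $U_1:=\langle a,b,d\rangle$ (which contains $X_0$, so $U_1=U'$) with $\A_{U_1}\cong\D_p$, and $U_2:=\langle a,b,c^{-\beta_2}d\rangle$ with $\A_{U_2}\cong\Z C_p\wr\Z C_p^2$, every remaining $W_t$ and $U$ being of type $\Z C_p\wr\Z C_p\wr\Z C_p$; this is the exceptional configuration. For the last assertion I would compute the $M$-orbit of an arbitrary $a^ib^jc^md^k$ and observe that when $m\ne0$ and $m+k\beta_2\ne0$ the orbit equals the full $V$-coset $Vc^md^k$, whereas the remaining basic sets lie inside $U_1\cup U_2$; hence every basic set outside $U_1\cup U_2$ is a $V$-coset. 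The main obstacle is the middle step: combining the trivial-radical condition with decomposability to force both $y\equiv0$ and the nondegenerate quadratic cocycle, thereby pinning down the exceptional structure and excluding $p=2$ (where $\D_p$ does not exist); the orbit computations on $U$ and on the pencil are then routine.
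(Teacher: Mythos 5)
Your route is genuinely different from the paper's (the paper passes to the quotient $\A_{G/A}$ and invokes the Table~1 classification, while you use Lemma~\ref{p4decomp} to write $\A=\cyc(M,G)$ with $M=\aut_G(\A)$ unitriangular and then solve the resulting cocycle equations), and much of it checks out: the classification $M^U\in\{H,K_\mu\}$, the deduction $\ker\rho=\{e\}$ from a trivial-radical basic set, the fixed-functional argument giving $y\equiv 0$, and the closing orbit computations identifying $U_1$, $U_2$ and the $V$-cosets are all correct. But there is a genuine gap: you never rule out \emph{singleton} basic sets outside $U$, i.e.\ fixed points of $M$ on $G\setminus U$, and two of your key claims fail without doing so. First, the cocycle condition does not by itself force $p$ odd: for $p=2$ it has the solutions $x=\beta_2 v$, $w=\beta_2 u+\gamma_2 v$ (the quadratic term is simply forced to vanish, $\gamma_1=\beta_2$), and for every $p$ the choice $\beta_2=0$ is consistent. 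Second, when $\beta_2=0$ (which is exactly what your normalization $d\mapsto c^{-\gamma_1}d$ produces in the $p=2$ case), the $M$-orbits of the elements $a^ib^jd^l$ with $j\mu+l\gamma_2=0$ are singletons lying outside $U$, and a singleton has trivial radical. Hence your asserted equivalence ``trivial-radical basic set exists $\iff\eta\ne 0\iff\beta_2\ne0$ and $p>2$'' is false as stated, and with it both the exclusion of $p=2$ and the derivation of the exceptional structure collapse; the same omission undermines the earlier unproved assertion that $\langle a\rangle$ is the only $M$-invariant line.

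The missing ingredient is precisely the paper's opening move. If some basic set outside $U$ is a singleton $\{x\}$, then Lemma~\ref{tenspr} yields $\A=\A_U\otimes\Z\langle x\rangle$, every $\varphi\in\aut_U(\A_U)$ extends to an automorphism of $G$ fixing $x$ that preserves all basic sets, so $\aut_G(\A)^U=\aut_U(\A_U)$ and the non-exceptional conclusion holds; in particular this is incompatible with $M^U=K_\mu$. Once this case is disposed of, you may assume every basic set outside $U$ has size greater than $1$, i.e.\ $M$ has no fixed points outside $U$. Then $\beta_2=0$ is impossible (it would create singleton orbits), for $p=2$ the forced vanishing of the quadratic coefficient leads after normalization to $\beta_2=0$ and hence to a contradiction, and with $p>2$, $\beta_2\ne0$, $\eta=\mu\beta_2/2\ne0$ the rest of your argument goes through as written. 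So the skeleton is salvageable, but the singleton/tensor-product step is not a cosmetic omission: it is the pivot on which both the $p=2$ exclusion and the trivial-radical dichotomy turn.
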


\begin{proof}
Let $(a,b,c)$ be an $\A_U$-basis, and so $\{a\}, bA, c(A\times B) \in \cS(\A_U)$, where $A=\langle a \rangle$ and 
$B=\langle b \rangle$. Clearly, $V=A \times B$.

Suppose for the moment that there exists 
$X \in \cS(\A)$ outside $U$ with $|X|=1$. In this case $X=\{x\}$ for some $x\in G \setminus U$.  Clearly, $\A_{\langle X\rangle}=\Z \langle X \rangle$. Lemma~\ref{tenspr} implies that $\A=\A_U \otimes \A_{\langle X \rangle}$. Let $\varphi \in \aut_U(\A_U)$. Define 
$\psi \in \aut(G)$ in the following way: 
$$
\psi^U=\varphi,~x^\psi=x.
$$ 
Then $\psi \in \aut_G(\A)$. We obtain that $\aut_{G}(\A)^U \geq \aut_U(\A_U)$, and therefore, $\aut_G(\A)^U=\aut_U(\A_U)$. 
Thus, we may assume that $|X|>1$ for every $X \in \cS(\A)$ outside 
$U$. This implies that $A$ is the unique $\A$-subgroup of 
order~$p$.

From now on let $X \in \cS(\A)$ be a basic set such that $X \not\subset U$ and $|\rad(X)|=1$. 

The $S$-ring $\A$ is decomposable. So $\A$ is the $(\bar{U}/\bar{L})$-wreath product for some $\A$-section $\bar{U}/\bar{L}$ with 
$\bar{U} < G$ and $|\bar{L}| > 1$. 
Note that, $X \subseteq \bar{U}$ because every basic set of $\A$ outside $\bar{U}$ has nontrivial radical, whereas $|\rad(X)|=1$. 
Since $\langle X \rangle \le \bar{U} < G$, we have 
$|\langle X \rangle| \leq |\bar{U}| \leq p^3$. In view of the previous paragraph, we may assume that $|X|>1$. Now, the description of all 
$p$-S-rings over an elementary abelian group of rank at most~$3$ given in Lemma~\ref{p3} implies that $p \neq 2$, $|X|=p$,  
$\langle X \rangle=\bar{U}$, $|\bar{U}|=p^3$, and 
$\A_{\bar{U}} \cong \D_p$, the S-ring in row no.~6 in Table~1. Therefore, choosing $U_1$ to be $\bar{U}$, the first part of 
Eq.~\eqref{eq:U12} holds. 
 
Since $V$ is the unique $\A$-subgroup of $G$ of order $p^2$ 
contained in $U$, it follows that $V=U \cap U_1$. Therefore, we may assume that
$$
X=\big\{ xb^ia^{\frac{i(i-1)}{2}}:~i \in \{0,\ldots,p-1\} \big\},
$$
for some $x\in G\setminus U$.

Let $Y \in \cS(\A)$ outside $U \cup U_1$. Assume that $|\rad(Y)|=1$. Then $Y \subseteq U_1$ because every basic set outside $U_1$ has  nontrivial radical, a contradiction.  So $|\rad(Y)|>1$. Since $A$ is the unique $\A$-subgroup of order $p$, we conclude that 
$A \leq \rad(Y)$. 

Let $\pi: G \rightarrow G/A$ be the canonical epimorphism. Consider 
the S-ring $\A_{G/A}$ over the group $G/A$ of order $p^3$. Note that,  $|\pi(X)|=|\pi(c V)|=p$ and $\rad(\pi(X))=\rad(\pi(c V))=\pi(V)$. Now, the description of all $p$-S-rings over $C_p^3$ given in Table~1 
implies that $\A_{G/A}$ is isomorphic to 
$\Z C_p \wr \Z C_p^2$ or $(\Z C_p \wr \Z C_p) \otimes \Z C_p$. 

Assume first that $\A_{G/A} \cong \Z C_p \wr \Z C_p^2$. 
Then $|\pi(Y)|=p$ and $\rad(\pi(Y))=\pi(V)$. Since $A \leq \rad(Y)$, we conclude that $Y$ is a $V$-coset. Thus we proved that every basic set of $\A$ outside $U \cup U_1$ is a $V$-coset. A straightforward check shows that 
$\aut_G(\A)$ contains the following subgroup:
$$
M=\big\{ \varphi \in \aut(G):~(a,b,c,x)^\varphi=
(a,ba^i,ca^jb^k,xb^ia^{\frac{i(i-1)}{2}}),\, i, j, k \in \{0,\ldots,p-1\} 
\big\}.
$$
Therefore, $|\aut_G(\A)| \geq |M|=p^3$. 

Suppose that $\varphi \in \aut_{G}(\A)$ acts trivially on $U$. Then 
$\varphi^{U_1} \in \aut_{U_1}(\A_{U_1})$ acts trivially on 
$V$. The S-ring $\A_{U_1}$ is Cayley minimal by Lemma~\ref{p3caymin}. So $|\aut_{U_1}(\A_{U_1})|=p$ and hence 
$b A$ is a regular orbit of 
$\aut_{U_1}(\A_{U_1})$. Therefore, $\varphi^{U_1}=\id_{U_1}$  because it acts trivially on $b A$. This yields that $\varphi$ acts trivially on $\langle U, U_1 \rangle=G$ and hence $|\aut_{G}(\A)^U|=|\aut_{G}(\A)|$. Using this and Lemma~\ref{p3notcaymin}, we conclude that 
$|\aut_{G}(\A)^U| \geq p^3=|\aut_U(\A_U)|$, and therefore,    
$\aut_{G}(\A)^U=\aut_U(\A_U)$.

Let $\A_{G/A} \cong (\Z C_p \wr \Z C_p) 
\otimes \Z C_p$. In this case $\O_\theta(\A_{G/A})$ has order 
$p^2$.  Choose $U_2$ to be the $\A$-subgroup of $G$ of order 
$p^3$ such that $\pi(U_2)=\O_\theta(\A_{G/A})$. Thus 
$\A_{U_2} \cong \Z C_p \wr \Z C_p^2$, so the second part 
of Eq.~\eqref{eq:U12} holds. 

Finally, if $Y \in \cS(\A)$ and $Y$ is outside $U_1 \cup U_2$, then 
$\pi(Y)=\pi(V)$, so $Y$ is a $V$-coset. This completes the proof of 
the lemma.
\end{proof}

\section{S-rings over an abelian group of non-powerful order}

A number $n$ is called \emph{powerful} if $p^2$ divides $n$ for every prime divisor $p$ of $n$. 
Wielandt~\cite{W} showed that, if $G$ is an abelian group of 
composite order with at least one cyclic Sylow 
subgroup, then the only primitive S-ring over $G$ is the one of 
rank~$2$ (see also \cite[Theorem~25.4]{Wi}). In particular, the following statement holds:

\begin{lemm}\label{primitive}
{\rm (cf. \cite{W})}
If $G$ is an abelian group of non-powerful composite order, then the only primitive S-ring over $G$ is the one of rank~$2$.
\end{lemm}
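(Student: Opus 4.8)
The plan is to deduce Lemma~\ref{primitive} directly from Wielandt's theorem as quoted just before the statement, namely that if $G$ is an abelian group of composite order possessing at least one cyclic Sylow subgroup, then the only primitive S-ring over $G$ is the one of rank~$2$. So the entire task reduces to showing that the hypothesis ``non-powerful composite order'' implies the hypothesis ``composite order with at least one cyclic Sylow subgroup'', after which the conclusion is immediate. This is essentially an unwinding of the definition of \emph{powerful}.

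First I would recall the definition: a positive integer $n$ is powerful if $p^2 \mid n$ for every prime divisor $p$ of $n$. Consequently, $n$ is \emph{non-powerful} precisely when there exists some prime divisor $p$ of $n$ with $p^2 \nmid n$, i.e.\ with $p \mid n$ but $p^2 \nmid n$. For such a $p$, the exact power of $p$ dividing $n$ is $p^1$, so the order of the Sylow $p$-subgroup of the abelian group $G$ (with $|G|=n$) is exactly $p$. A group of prime order $p$ is cyclic, so this Sylow $p$-subgroup is cyclic. Hence $G$ has at least one cyclic Sylow subgroup. Since $G$ has composite order by assumption, both hypotheses of Wielandt's theorem are met.

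With both hypotheses verified, I would invoke Wielandt's result (Lemma~\ref{primitive}'s preceding paragraph, equivalently \cite[Theorem~25.4]{Wi}) to conclude that the only primitive S-ring over $G$ is the one of rank~$2$, which is exactly the assertion of the lemma.

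There is essentially no main obstacle here; the statement is a corollary extracting the non-powerful case from the more general Wielandt theorem already cited. The only point requiring a moment's care is the logical negation of ``powerful,'' and the elementary observation that a prime-order group is cyclic; everything else is a direct citation. I would therefore keep the proof to one or two sentences, simply noting that a non-powerful composite $n$ admits a prime $p$ with $p \,\|\, n$, so the corresponding Sylow subgroup has prime order and is cyclic, and then applying Wielandt's theorem.
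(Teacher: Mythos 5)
Your proof is correct and is exactly the argument the paper intends: the lemma is stated as an immediate consequence ("In particular") of Wielandt's theorem, since a non-powerful composite order admits a prime $p$ dividing it exactly once, making the corresponding Sylow $p$-subgroup cyclic of order $p$. Nothing further is needed.
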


In the remaining lemmas the group $G=P \times Q$, where $P$ is an abelian group and $Q \cong C_q$ with prime $q$ coprime to $|P|$.  
Clearly, $|G|$ is non-powerful. Furthermore, $\A$ is an S-ring over 
$G$, $\bar{P}$ is the unique maximal $\A$-subgroup contained in $P$, 
and $\bar{Q}$ is the least $\A$-subgroup containing $Q$. 
Note that, $\bar{P} \bar{Q}$ is an $\A$-subgroup.

\begin{lemm}\label{nonpower1}
{\rm (\cite[Proposition~13]{MS})} 
The group $\bar{P}$ is a maximal $\A_{\bar{P} \bar{Q}}$-subgroup.
\end{lemm}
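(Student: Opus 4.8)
The plan is to show directly that there is no $\A$-subgroup strictly between $\bar{P}$ and $W:=\bar{P}\bar{Q}$, which is exactly the assertion that $\bar{P}$ is a maximal $\A_W$-subgroup (equivalently, that the quotient S-ring $\A_{W/\bar{P}}$ is primitive). First I would record two structural facts. Since both the intersection and the subgroup generated by two $\A$-subgroups are again $\A$-subgroups, the $\A$-subgroups contained in $P$ possess a greatest element, so $\bar{P}$ is not merely \emph{a} maximal but \emph{the} largest $\A$-subgroup inside $P$; dually, $\bar{Q}$ is the smallest $\A$-subgroup containing $Q$. I would also note that $W$ is an $\A$-subgroup with $\bar{P}<W$ properly (because $W \supseteq \bar{Q} \supseteq Q$ while $\bar{P}\leq P$ and $Q\cap P=\{e\}$), and that the $\A_W$-subgroups of $W$ are precisely the $\A$-subgroups of $G$ contained in $W$.

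Next I would take an arbitrary $\A$-subgroup $T$ with $\bar{P}\leq T\leq W$ and aim to force $T=\bar{P}$ or $T=W$. Because $q=|Q|$ is coprime to $|P|$ and $G=P\times Q$, every subgroup of $G$ splits along its Sylow decomposition, so $T=(T\cap P)\times(T\cap Q)$ with $T\cap Q\leq Q\cong C_q$; hence $T\cap Q$ is either trivial or all of $Q$.

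Then I would split into two cases. If $T\cap Q=\{e\}$, then $T=T\cap P\leq P$, so $T$ is an $\A$-subgroup contained in $P$ and therefore $T\leq\bar{P}$, forcing $T=\bar{P}$. If $T\cap Q=Q$, then $Q\leq T$, so the minimality of $\bar{Q}$ gives $\bar{Q}\leq T$; combined with $\bar{P}\leq T$ this yields $W=\bar{P}\bar{Q}\leq T$, forcing $T=W$. As no intermediate $\A$-subgroup exists, $\bar{P}$ is a maximal $\A_W$-subgroup, as claimed.

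I expect no serious obstacle here: the argument is driven entirely by the coprimality $\gcd(|P|,q)=1$, which yields the Sylow splitting of $T$, together with the defining extremal properties of $\bar{P}$ and $\bar{Q}$. The only point needing a word of care is the justification that $\bar{P}$ is the largest $\A$-subgroup in $P$ and $\bar{Q}$ the smallest $\A$-subgroup containing $Q$, but both follow from the closure of the family of $\A$-subgroups under intersection and join. Note that Lemma~\ref{primitive} is not actually required for this reduction; it becomes relevant only afterwards, when one analyses the internal structure of $\A_W$ and $\A_{W/\bar P}$ using the non-powerful composite order of the relevant section.
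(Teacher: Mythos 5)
Your argument is correct and complete. One point of context: the paper itself gives no proof of this lemma --- it is imported as Proposition~13 of \cite{MS} --- so what your write-up really does is supply a self-contained elementary proof in place of the citation, and it is the natural argument for this statement. The key steps all check out: (i) the $\A_W$-subgroups of $W=\bar{P}\bar{Q}$ are exactly the $\A$-subgroups of $G$ contained in $W$; (ii) since $G$ is abelian and $\gcd(|P|,q)=1$, any subgroup $T$ with $\bar{P}\le T\le W$ splits as $T=(T\cap P)\times(T\cap Q)$, and $T\cap Q$ is $\{e\}$ or $Q$ because $|Q|=q$ is prime; (iii) if $T\cap Q=\{e\}$, then $T\le P$ and hence $T\le\bar{P}$, using that the join of two $\A$-subgroups contained in $P$ is again an $\A$-subgroup contained in $P$, so $\bar{P}$ is the largest such subgroup; while if $Q\le T$, then $\bar{Q}\le T$ by minimality of $\bar{Q}$, whence $W=\bar{P}\bar{Q}\le T$. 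Together with the properness $\bar{P}<W$, which you also verify, this gives maximality. Your closing remark is accurate as well: Lemma~\ref{primitive} plays no role in this statement; it becomes relevant only afterwards, when the primitivity of the quotient $\A_{W/\bar{P}}$ (guaranteed by the present lemma) is exploited in the situations of Lemmas~\ref{nonpower2} and~\ref{nonpower3}. In short, your proof is sound; the only difference from the paper is that the paper outsources the argument to \cite{MS}, whereas you make the text self-contained at essentially no extra cost.
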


 The following lemma can be retrieved from the proof 
of \cite[Proposition~14]{MS}.

\begin{lemm}\label{nonpower2}
{\rm (cf.\ \cite[Proposition~14]{MS})}
If $\bar{P} \ne (\bar{P} \bar{Q})_{q'}$, the Hall $q'$-subgroup of $\bar{P} \bar{Q}$, 
then $\A_{\bar{P} \bar{Q}}=\A_{\bar{P}} \star \A_{\bar{Q}}$.
\end{lemm}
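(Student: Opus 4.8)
My plan is to extract a rank-$2$ quotient from Lemma~\ref{nonpower1} and then read the star product off from it. Since $Q\cong C_q$ is the cyclic Sylow $q$-subgroup of the abelian group $\bar{P}\bar{Q}$, we have $\bar{P}\bar{Q}=(\bar{P}\bar{Q})_{q'}\times Q$ and $\bar{P}\le(\bar{P}\bar{Q})_{q'}$. The hypothesis $\bar{P}\ne(\bar{P}\bar{Q})_{q'}$ thus makes the quotient $\bar{P}\bar{Q}/\bar{P}\cong\big((\bar{P}\bar{Q})_{q'}/\bar{P}\big)\times Q$ a group of composite order whose Sylow $q$-subgroup has order $q$, hence of non-powerful order. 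By Lemma~\ref{nonpower1}, $\bar{P}$ is a maximal $\A_{\bar{P}\bar{Q}}$-subgroup, so $\A_{\bar{P}\bar{Q}/\bar{P}}$ is primitive, and Lemma~\ref{primitive} forces it to be the S-ring of rank~$2$. Consequently every basic set of $\A_{\bar{P}\bar{Q}}$ outside $\bar{P}$ projects onto $(\bar{P}\bar{Q}/\bar{P})\setminus\{\bar{e}\}$, and this is the fact driving everything else.

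Next I would check that $\A_{\bar{P}\bar{Q}}=\A_{\bar{P}}\star\A_{\bar{Q}}$ with $V=\bar{P}$, $W=\bar{Q}$. Put $R_0=\bar{P}\cap\bar{Q}$, which is an $\A$-subgroup, and write $\pi\colon\bar{P}\bar{Q}\to\bar{P}\bar{Q}/R_0$ for the projection. Condition~(1) is immediate as $\bar{P}\bar{Q}$ is abelian. Because $\bar{Q}$ surjects onto $\bar{P}\bar{Q}/\bar{P}$, the $\A$-subgroups $\bar{P}/R_0$ and $\bar{Q}/R_0$ meet trivially, so $\bar{P}\bar{Q}/R_0=\bar{P}/R_0\times\bar{Q}/R_0$; moreover the standard isomorphism of S-rings $\A_{\bar{Q}/R_0}\cong\A_{\bar{P}\bar{Q}/\bar{P}}$ shows that $\A_{\bar{Q}/R_0}$ has rank~$2$ and that $R_0$ is a maximal $\A_{\bar{Q}}$-subgroup. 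I would then reduce conditions~(2) and~(3) to two rigidity statements: (b)~$R_0\le\rad(X)$ for every basic set $X$ outside $\bar{P}$, and (c)~$\A_{\bar{P}\bar{Q}/R_0}=\A_{\bar{P}/R_0}\otimes\A_{\bar{Q}/R_0}$. Indeed (b) gives condition~(2) at once, and it forces $\bar{Q}\setminus R_0$ to be a single basic set $Z$; then for $X$ outside $\bar{P}\cup\bar{Q}$ one factors $\pi(X)=\bar{Y}\times\bar{Z}$ via (c) (with $\bar{Z}=(\bar{Q}/R_0)\setminus\{\bar{e}\}$), lifts $\bar{Y}$ to some $Y\in\cS(\A_{\bar{P}})$, and verifies $X=YZ$ by comparing images under $\pi$, which yields condition~(3).

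The technical heart, and where I expect the real difficulty, is establishing (b) and (c). For (b), the quotient structure makes any basic set $X$ outside $\bar{P}$ meet each $R_0$-coset it touches in a constant number $c$ of points; applying the Schur--Wielandt principle to products such as $\underline{R_0}\,\underline{X}$ and $\underline{X}\,\underline{X^{-1}}$ one aims to show that $c<|R_0|$ would make a coefficient function nonconstant on a basic set, a contradiction. The decisive input is that $\bar{Q}$ is the \emph{least} $\A$-subgroup containing the prime-order group $Q$: the basic set through a generator of $Q$ generates all of $\bar{Q}$ and so cannot avoid the $R_0$-directions, which is exactly what excludes a splitting of $\bar{Q}\setminus R_0$ into several cosetwise-partial basic sets. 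Statement (c) is the analogous rigidity one level up: over $\bar{P}/R_0\times\bar{Q}/R_0$ the rank-$2$ factor $\A_{\bar{Q}/R_0}$, together with the rank-$2$ quotient $\A_{\bar{P}\bar{Q}/\bar{P}}$, should force the tensor decomposition by the same constancy-of-structure-constants argument. This is the point where Lemma~\ref{tenspr} would apply were one of the factors a group ring; since neither need be, one must argue directly.

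The main obstacle, therefore, is transporting the clean primitive information of Lemma~\ref{primitive} back across the imprimitive extension $\bar{P}\bar{Q}\to\bar{P}\bar{Q}/\bar{P}$: this is precisely where the prime order of $Q$, its coprimality to $|P|$, and the extremal choices of $\bar{P}$ and $\bar{Q}$ are all needed. Should the direct Schur--Wielandt bookkeeping become unwieldy, I would instead induct on $|\bar{Q}|$: applying the same dichotomy to $\A_{\bar{Q}}$ with $R_0$ in the role of $\bar{P}$ (noting $R_0\ne(\bar{Q})_{q'}$) reduces (b) to the base case $\A_{\bar{Q}}=\A_{R_0}\wr\A_{\bar{Q}/R_0}$, after which (c) and the factorization of condition~(3) follow as above.
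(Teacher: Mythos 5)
You should first note that the paper offers no proof of Lemma~\ref{nonpower2} at all: it is explicitly imported from the proof of \cite[Proposition~14]{MS}, so your attempt has to be judged on its own merits. Your opening move is correct and is exactly what the surrounding lemmas are set up for: since $\bar{P}\bar{Q}=(\bar{P}\bar{Q})_{q'}\times Q$, the hypothesis makes $|\bar{P}\bar{Q}/\bar{P}|$ composite and non-powerful, so Lemma~\ref{nonpower1} together with Lemma~\ref{primitive} gives $\rk(\A_{\bar{P}\bar{Q}/\bar{P}})=2$. Your reduction of the star decomposition to the two statements (b) $R_0=\bar{P}\cap\bar{Q}\le\rad(X)$ for every basic set $X$ outside $\bar{P}$, and (c) $\A_{\bar{P}\bar{Q}/R_0}=\A_{\bar{P}/R_0}\otimes\A_{\bar{Q}/R_0}$, is also sound: given (b), the rank-$2$ quotient forces $\bar{Q}\setminus R_0$ to be a single basic set $Z$ with $R_0\le\rad(Z)$, and comparing the $\pi_{R_0}$-images of $X$ and $YZ$, both unions of $R_0$-cosets, yields condition (3).

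The genuine gap is that (b) and (c) are never proved, and they are the entire content of the lemma. The Schur--Wielandt principle applied to $\underline{R_0}\,\underline{X}$ gives only that a basic set $X$ outside $\bar{P}$ meets every $R_0$-coset it touches in a constant number $c$ of points, and that $R_0X$ is an $\A$-set; nothing in your sketch excludes $c<|R_0|$, i.e.\ the possibility that $R_0X$ splits into several basic sets, each meeting each coset in $c$ points. Ruling that out is precisely where the minimality of $\bar{Q}$, the maximality of $\bar{P}$ and the rank-$2$ quotient have to interact, and phrases like ``one aims to show'' and ``should force the tensor decomposition'' stand in place of that argument rather than supplying it. The fallback induction does not close the gap either: if $\bar{Q}=G$ then $R_0=\bar{P}$ and applying the lemma to $\A_{\bar{Q}}$ means applying it to $\A$ itself, which is circular; and when $\bar{Q}<G$, the inductive step only yields (b) for basic sets inside $\bar{Q}$, saying nothing about basic sets outside $\bar{P}\cup\bar{Q}$ nor about (c), for which you again appeal to the uncarried-out bookkeeping. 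As it stands, the proposal is a correct reduction plus a plan; the missing combinatorial core is exactly what the proof of \cite[Proposition~14]{MS} supplies.
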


\begin{lemm}\label{nonpower3}
{\rm (\cite[Proposition 15]{MS})}
If $\A_{\bar{P} \bar{Q}/\bar{P}} \cong \Z C_q$, then 
$\A_{\bar{P} \bar{Q}}=\A_{\bar{P}} \star \A_{\bar{Q}}$. 
\end{lemm}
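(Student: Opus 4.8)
\emph{The plan is to verify directly the three defining conditions of the star product with $V=\bar{P}$ and $W=\bar{Q}$, after first extracting the structural consequences of the hypothesis.} Since $\A_{\bar{P}\bar{Q}/\bar{P}}\cong\Z C_q$ has rank $q$, the index $[\bar{P}\bar{Q}:\bar{P}]$ equals $q$; as $\bar P\le P$ has order coprime to $q$, it is the Hall $q'$-subgroup of $\bar{P}\bar{Q}$, whose Sylow $q$-subgroup is $Q$, so $\bar{P}\bar{Q}=\bar{P}\times Q$ and $C:=\bar{P}\cap\bar{Q}=\bar{Q}\cap P\le\bar{P}$ is a $q'$-group with $\bar{Q}=C\times Q$. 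Moreover $\A_{\bar{P}\bar{Q}/\bar{P}}\cong\Z C_q$ means that every $\bar{P}$-coset is an $\A$-set; projecting to this rank-$q$ quotient, the image of any basic set is a singleton, so every basic set inside $\bar{P}\bar{Q}$ lies in a single $\bar{P}$-coset. Condition~(1) of the star product, $C\lhd\bar{Q}$, is automatic as $G$ is abelian.

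\emph{Next I would set up a tensor decomposition modulo $C$ and reduce everything to one claim.} Since each basic set contained in $\bar{Q}$ lies in a single $C$-coset $Cw$ ($w\in Q$), its image in $\bar{Q}/C$ is a point, so $\A_{\bar{Q}/C}\cong\Z C_q$ is the full group ring. As $\bar{P}/C$ and $\bar{Q}/C$ are $\A$-subgroups of $\bar{P}\bar{Q}/C=(\bar{P}/C)\times(\bar{Q}/C)$ and the second factor carries the group ring, Lemma~\ref{tenspr} gives $\A_{\bar{P}\bar{Q}/C}=\A_{\bar{P}/C}\otimes\A_{\bar{Q}/C}$. The whole argument then rests on the following \emph{Main Claim}: every basic set $X$ with $X\not\subseteq\bar{P}$ satisfies $C\le\rad(X)$. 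Granting it, condition~(2) is immediate, since a basic set inside $\bar{Q}\setminus\bar{P}$ lies in one coset $Cw$ and is $C$-invariant, hence equals $Cw$. Condition~(3) follows by a counting argument: for a basic set $X$ outside $\bar{P}\cup\bar{Q}$, say $X\subseteq\bar{P}w$ with $w\in Q\setminus\{e\}$, put $Z=Cw\in\cS(\A_{\bar{Q}})$ and let $Y\in\cS(\A_{\bar{P}})$ contain a point $y_0$ with $y_0w\in X$; then $X\subseteq YZ$, while the tensor decomposition identifies $\pi_C(X)=\pi_C(Y)\times\{Cw\}$, so that $|X|=|C|\,|\pi_C(X)|=|YC|=|YZ|$ and therefore $X=YZ$.

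\emph{The heart of the matter is the Main Claim, and the essential lever is the minimality of $\bar{Q}$.} For a basic set $X\subseteq\bar{P}w$ with $w\ne e$, the element $w$ generates $Q$ (as $q$ is prime), so $\langle X\rangle$ surjects onto $Q$; its Sylow $q$-subgroup therefore equals $Q$, whence $Q\le\langle X\rangle$ and, by minimality, $\bar{Q}\le\langle X\rangle$. Applying the Schur--Wielandt principle to $\underline{C}\,\underline{X}$ shows that $X$ meets each $C$-coset it meets in the same number $v$ of points, so the Main Claim is precisely the assertion $v=|C|$, i.e.\ that the $C$-cosets carrying $X$ do not split into several basic sets. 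Minimality also gives the partial bound $C\le\langle X X^{-1}\rangle$: writing $H=\langle X X^{-1}\rangle$, the image of $X$ in $\A_{\bar{P}\bar{Q}/H}$ is a single thin element whose generated section is a group ring containing the image of $Q$, so $QH$ is an $\A$-subgroup containing $Q$, forcing $QH\supseteq\bar{Q}$ and hence $C\le H$.

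\emph{I expect the genuine obstacle to be upgrading $C\le\langle X X^{-1}\rangle$ to $C\le\rad(X)$}, that is, ruling out the splitting of these $C$-cosets and thereby showing that $\A_{\bar{Q}}$ is the wreath product $\A_{C}\wr\Z C_q$ (and $\A_{\bar{P}\bar{Q}}$ the associated generalized wreath, hence star, product). This rigidity of a coprime $C_q$-extension does not follow from generation and minimality alone; it requires a structure-constant analysis showing that, once $Q$ lies in no proper $\A$-subgroup, the coprimality of $|C|$ and $q$ prevents any $C$-coset outside $\bar{P}$ from breaking into smaller basic sets. This is exactly the computation carried out by Muzychuk and Somlai, and I would import it here. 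Once it is secured, conditions~(1)--(3) all hold and $\A_{\bar{P}\bar{Q}}=\A_{\bar{P}}\star\A_{\bar{Q}}$, as required.
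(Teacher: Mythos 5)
Your structural analysis is correct as far as it goes, and it is worth noting that the paper itself offers no internal proof to compare against: Lemma~\ref{nonpower3} is imported verbatim from \cite[Proposition~15]{MS}. Your preprocessing is sound: from $\A_{\bar{P}\bar{Q}/\bar{P}}\cong\Z C_q$ you correctly obtain $\bar{P}\bar{Q}=\bar{P}\times Q$ and $\bar{Q}=C\times Q$ with $C=\bar{P}\cap\bar{Q}$, that every basic set in $\bar{P}\bar{Q}$ lies in a single $\bar{P}$-coset, the tensor decomposition $\A_{\bar{P}\bar{Q}/C}=\A_{\bar{P}/C}\otimes\A_{\bar{Q}/C}$ via Lemma~\ref{tenspr}, and the reduction of the three star-product conditions to your Main Claim ($C\le\rad(X)$ for every basic set $X\not\subseteq\bar{P}$). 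The counting argument for condition~(3), the Schur--Wielandt observation that a basic set meets every $C$-coset it meets in the same number of points, and the bound $C\le\langle XX^{-1}\rangle$ obtained from the minimality of $\bar{Q}$ are all correct as well.

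However, the proposal has a genuine gap, which you yourself flag: the Main Claim is never proved, and it is not a technicality --- it is the entire mathematical content of the lemma. Deferring it to ``the computation carried out by Muzychuk and Somlai'' is circular in this setting, because the statement under proof \emph{is} \cite[Proposition~15]{MS}; citing their computation for the key step is citing the lemma itself. Moreover, your partial results genuinely do not suffice to close it: knowing that $X$ meets each $C$-coset it meets in a constant number $v$ of points and that $C\le\langle XX^{-1}\rangle$ is formally compatible with $v<|C|$ --- for instance, a two-element subset $\{w,cw\}$ of a $C$-coset, with $c$ a generator of $C$, already has difference set generating $C$ --- so ruling out such splittings requires a further argument, e.g.\ a structure-constant analysis exploiting that products such as $\underline{T}\,\underline{X}$ for $\A$-sets $T\subseteq C$ must remain in the module, combined with the minimality of $\bar{Q}$. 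Until that step is supplied, what you have is a correct reduction of the lemma to its own core, not a proof of it.
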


\begin{lemm}\label{nonpower4}
{\rm (\cite[Lemma~6.2]{EKP})}
If $\bar{P} < P$, then one of the following statements holds: 
\begin{enumerate}[{\rm (1)}]
\item $\A=\A_{\bar{P}} \wr \A_{G/\bar{P}}$ with 
$\rk(\A_{G/\bar{P}})=2$;
\item $\A=\A_{\bar{P} \bar{Q}} \wr_S \A_{G/\bar{Q}}$, where 
$S=\bar{P} \bar{Q}/\bar{Q}$ and $\bar{Q} < G$.
\end{enumerate}
\end{lemm}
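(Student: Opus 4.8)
The plan is to pass to the quotient S-ring $\A_{G/\bar P}$ and exploit the maximality of $\bar P$ together with Wielandt's theorem (Lemma~\ref{primitive}). Write $\pi\colon G\to G/\bar P$ for the canonical epimorphism and note that $G/\bar P=(P/\bar P)\times\pi(Q)$, where $P/\bar P\neq\{e\}$ is a $q'$-group and $\pi(Q)\cong C_q$. First I would observe that, by the maximality of $\bar P$, the factor $P/\bar P$ contains no nontrivial $\A_{G/\bar P}$-subgroup: the full $\pi$-preimage of such a subgroup would be an $\A$-subgroup of $G$ contained in $P$ and properly containing $\bar P$. Since every subgroup of $G/\bar P$ splits over its Sylow $q$-subgroup $\pi(Q)$, it follows that every nontrivial $\A_{G/\bar P}$-subgroup contains $\pi(Q)$. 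Consequently the least $\A_{G/\bar P}$-subgroup containing $\pi(Q)$, namely $\bar P\bar Q/\bar P$, is the unique minimal $\A_{G/\bar P}$-subgroup.

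Now I would split into the cases $\bar Q=G$ and $\bar Q<G$. Suppose first that $\bar Q=G$, i.e.\ no proper $\A$-subgroup contains $Q$. Then $G/\bar P$ admits no proper nontrivial $\A_{G/\bar P}$-subgroup $N$, since $\pi^{-1}(N)$ would be a proper $\A$-subgroup containing $Q$; hence $\A_{G/\bar P}$ is primitive. As $|G/\bar P|=|P/\bar P|\,q$ is composite and non-powerful, Lemma~\ref{primitive} gives $\rk(\A_{G/\bar P})=2$. Moreover $\bar P\bar Q=G$, so $(\bar P\bar Q)_{q'}=P\neq\bar P$, and Lemma~\ref{nonpower2} yields $\A=\A_{\bar P\bar Q}=\A_{\bar P}\star\A_{\bar Q}=\A_{\bar P}\star\A_G$. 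For this star product we have $V=\bar P$, $W=\bar Q=G$ and $V\cap W=\bar P$, so its defining condition~(2) says precisely that every basic set contained in $G\setminus\bar P$ is a union of $\bar P$-cosets, i.e.\ $\bar P\le\rad(X)$ for each $X\in\cS(\A)$ outside $\bar P$. Thus $\A=\A_{\bar P}\wr\A_{G/\bar P}$ with $\rk(\A_{G/\bar P})=2$, which is conclusion~(1).

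Next suppose $\bar Q<G$, and aim for conclusion~(2) with $U=\bar P\bar Q$, $L=\bar Q$ and $S=\bar P\bar Q/\bar Q$; the required inequality $\bar Q<G$ is part of the hypothesis. If $\bar P\bar Q=G$ then $U=G$ and~(2) holds vacuously, there being no basic set outside $U$. Otherwise $\bar P\bar Q<G$, and the task is to establish the generalized wreath condition $\bar Q\le\rad(X)$ for every $X\in\cS(\A)$ with $X\not\subseteq\bar P\bar Q$. As a first reduction, Lemma~\ref{nonpower1} shows that $\bar P$ is a maximal $\A_{\bar P\bar Q}$-subgroup, so the section $\bar P\bar Q/\bar P$ — which is exactly the unique minimal $\A_{G/\bar P}$-subgroup isolated in the first step — carries a primitive S-ring, of order either $q$ or, by the coprimality $\gcd(|P|,q)=1$, a non-powerful composite number; in the latter case Lemma~\ref{primitive} again forces its rank to be $2$. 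The remaining and decisive point is to upgrade this information about the bottom section to the radical statement $\bar Q\le\rad(X)$ for the basic sets that straddle the boundary of $\bar P\bar Q$.

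The main obstacle is exactly this last step. A rank-$2$ quotient does not by itself force a wreath decomposition, so Lemma~\ref{primitive} cannot simply be quoted and the $\bar Q$-cosets inside a basic set outside $\bar P\bar Q$ must be reconstructed directly. The difficulty is compounded by the fact that the natural top quotient $G/\bar Q$ is a $q'$-group which may well be powerful, so Wielandt's theorem is unavailable there. The way through is to use essentially that $Q$ is a full cyclic Sylow subgroup with $\gcd(|Q|,|P|)=1$: this coprimality, combined with $\bar Q$ being the least $\A$-subgroup containing $Q$ and with $\bar P\bar Q/\bar P$ being the unique minimal $\A_{G/\bar P}$-subgroup, should prevent a basic set from meeting a $\bar Q$-coset in a proper nonempty subset. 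I expect this coprime-Sylow argument to be the technical heart of the proof; the two extremal situations $\bar Q=G$ and $\bar P\bar Q=G$ handled above are precisely the degenerate cases in which it is not needed.
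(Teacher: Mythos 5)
Your proposal is not a proof: everything you establish rigorously concerns the two degenerate situations, while the case carrying the actual content of the lemma --- $\bar{Q} < G$ together with $\bar{P}\bar{Q} < G$ --- is left as an announced ``coprime-Sylow argument'' that is never carried out. Note that the paper itself gives no proof to compare against (the statement is imported verbatim from \cite[Lemma~6.2]{EKP}), so the burden of a blind proof is precisely to supply the argument of that reference, and your text stops exactly where that argument has to begin. Concretely, in the main case you must show $\bar{Q} \le \rad(X)$ for every $X \in \cS(\A)$ with $X \not\subseteq \bar{P}\bar{Q}$, and no step you take produces even a single $Q$-coset inside such an $X$: the facts you do prove (every nontrivial $\A_{G/\bar{P}}$-subgroup contains $\bar{P}\bar{Q}/\bar{P}$; primitivity of $\A_{\bar{P}\bar{Q}/\bar{P}}$) are statements about sections inside $\bar{P}\bar{Q}$, and, as you yourself concede, primitivity or rank-$2$ information about a section does not transfer to a radical condition on basic sets lying outside it.

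For what it is worth, the parts you completed are correct, and your overall strategy is the right one: when $\bar{Q}=G$, Lemma~\ref{primitive} applies because $|P/\bar{P}|>1$ and $q^2 \nmid |G/\bar{P}|$, and combining it with Lemma~\ref{nonpower2} does yield conclusion~(1); when $\bar{Q}<G$ and $\bar{P}\bar{Q}=G$, conclusion~(2) is vacuous; and aiming at~(2) whenever $\bar{Q}<G$ is forced, since conclusion~(1) makes $G\setminus\bar{P}$ a single basic set, whence every proper $\A$-subgroup lies in $\bar{P}$ and $\bar{Q}=G$. Two ingredients that your sketch of the remaining case never invokes, and which are the natural entry points for it, are: (i) Lemma~\ref{nonpower5} (that is, \cite[Lemma~6.1]{EKP}), which controls how a basic set meets the fibers of the projection $\pr_P$; and (ii) the fact that $\rad(X)$ is itself an $\A$-subgroup, so that proving merely $Q \le \rad(X)$ automatically upgrades, by minimality of $\bar{Q}$, to $\bar{Q} \le \rad(X)$. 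Without an argument along these lines, what you have is an honest reduction of the lemma to its hard case, not a proof of it.
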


Let $\pr_P$ denote the \emph{projection} from 
$G=P \times Q$ to $P$. 

\begin{lemm}\label{nonpower5}
{\rm (\cite[Lemma~6.1]{EKP})}
For each basic set $X \in \cS(\A)$, $\pr_P(X) \setminus X$ is an $\A$-set. 
\end{lemm}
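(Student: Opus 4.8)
The plan is to first replace $X$ by a more symmetric $\A$-set obtained from its orbit under a well-chosen power automorphism, and then to recover $\pr_P(X)\setminus X$ from the coefficients of a $q$-th power in the group ring, read modulo $q$. Since $\gcd(|P|,q)=1$, I would choose by the Chinese Remainder Theorem an integer $s$ with $s\equiv 1\pmod{|P|}$, with $s$ a primitive root modulo $q$, and with $\gcd(s,|G|)=1$. Then $\sigma_s\colon g\mapsto g^{s}$ lies in $\aut(G)$; because $\gcd(s,|G|)=1$, the power map $\sigma_s$ permutes the basic sets of $\A$ (a standard property of S-rings over abelian groups), it fixes $P$ pointwise, and it acts as a full cycle on each punctured coset $x(Q\setminus\{e\})$. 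Consequently
$$
\widetilde{X}:=\bigcup_{i=0}^{q-2}\sigma_s^{\,i}(X)=(X\cap P)\cup\bigcup_{x\in S}x(Q\setminus\{e\})
$$
is a union of basic sets, hence an $\A$-set, where $S=\{x\in P:\ X\cap x(Q\setminus\{e\})\neq\emptyset\}$. Note that $\pr_P(X)\setminus X=S\setminus(X\cap P)$, since $X\cap P=\pr_P(X)\cap X$.

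The heart of the argument is to detect $S\setminus(X\cap P)$ among the elements of $P$. For this I would use the Frobenius congruence in $\Z G$: the element $\underline{\widetilde{X}}^{\,q}$ lies in $\A$, and coefficientwise it is congruent modulo $q$ to $\sum_{h\in\widetilde{X}}h^{q}$. Writing $\rho$ for the automorphism $x\mapsto x^{q}$ of $P$ and using that $y^{q}=e$ for $y\in Q$, one computes
$$
\sum_{h\in\widetilde{X}}h^{q}=\sum_{x\in X\cap P}x^{q}+(q-1)\sum_{x\in S}x^{q},
$$
which is supported on $P$, with the coefficient of $\rho(x)$ equal to $[x\in X\cap P]+(q-1)[x\in S]$. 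Reducing modulo $q$, the coefficient of $\rho(x)$ in $\underline{\widetilde{X}}^{\,q}$ is congruent to $-1$ precisely when $x\in S\setminus(X\cap P)$, while the coefficient of any element outside $P$ is congruent to $0$. Hence the set of group elements whose coefficient in $\underline{\widetilde{X}}^{\,q}$ is $\equiv -1\pmod q$ is exactly $\rho(\pr_P(X)\setminus X)$.

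Since $\underline{\widetilde{X}}^{\,q}\in\A$, the Schur--Wielandt principle shows that the set of elements carrying a prescribed coefficient is an $\A$-set; taking the union over the (finitely many) coefficient values lying in the residue class $-1$ modulo $q$, I conclude that $\rho(\pr_P(X)\setminus X)$ is an $\A$-set. Finally I would undo $\rho$ by applying the power map $\sigma_{q'}$ with $q'\equiv q^{-1}\pmod{|P|}$ and $\gcd(q',|G|)=1$: it permutes the basic sets and restricts to $\rho^{-1}$ on $P$, so it carries the $\A$-set $\rho(\pr_P(X)\setminus X)$ to $\pr_P(X)\setminus X$, which is therefore an $\A$-set. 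The step I expect to be the main obstacle is exactly the passage from the coset-supported data to a subset of $P$: neither $P$ nor $Q$ need be an $\A$-subgroup, so no direct projection onto $P$ is available, and the device that overcomes this is to read $\underline{\widetilde{X}}^{\,q}$ modulo $q$ (rather than trying to place $\sum_{h}h^{q}$ itself in $\A$, which would amount to $Q$ being an $\A$-subgroup) and then to extract a whole residue class of coefficients by Schur--Wielandt.
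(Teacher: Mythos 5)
Your argument is built from the classical Schur-ring toolkit (Schur's theorem on multipliers over abelian groups, the Frobenius congruence, and the Schur--Wielandt principle), and for \emph{odd} $q$ it is complete and correct; note that the paper itself gives no proof of this lemma to compare with, since it is quoted from \cite[Lemma~6.1]{EKP}. However, there is a genuine gap at $q=2$, a case the lemma must cover: the standing hypotheses of Section~6 only require $q$ to be a prime coprime to $|P|$, and $q=2$ really occurs in this paper (e.g.\ $G=C_p^4\times C_2$ in Theorem~\ref{main}). Your key identification --- that the coefficient of $\rho(x)$ in $\underline{\widetilde{X}}^{\,q}$ is congruent to $-1$ modulo $q$ precisely when $x\in S\setminus(X\cap P)$ --- implicitly uses that the residues $0$, $1$, $-1$ are pairwise distinct modulo $q$. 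When $q=2$ one has $1\equiv -1\pmod 2$, so the residue class $-1$ also captures the points $x\in (X\cap P)\setminus S$ (whose coefficient is $1$), and your extraction produces $\rho\bigl((X\cap P)\,\triangle\,S\bigr)$ rather than $\rho\bigl(S\setminus(X\cap P)\bigr)$. (For $q=2$ the multiplier step also degenerates, since the conditions on $s$ force $\sigma_s=\id_G$; this part is harmless, as $\widetilde{X}=X$ then already has the displayed form.)

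Fortunately the gap is local and easy to close. Let $D$ be the image under $\sigma_{q'}$ of the set of elements whose coefficient in $\underline{\widetilde{X}}^{\,q}$ lies in the residue class $-1$ modulo $q$; by your own argument $D$ is an $\A$-set, and $D=S\setminus(X\cap P)$ for odd $q$, while $D=(X\cap P)\,\triangle\,S$ for $q=2$. Now use that the difference of two $\A$-sets is again an $\A$-set (both are unions of basic sets, and basic sets are pairwise disjoint). Since $(X\cap P)\setminus S\subseteq X$ whereas $\bigl(S\setminus(X\cap P)\bigr)\cap X=\emptyset$, in both cases
$D\setminus X=S\setminus(X\cap P)=\pr_P(X)\setminus X$.
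Adding this one line (or treating $q=2$ separately) makes your proof correct for all primes $q$ coprime to $|P|$.
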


\begin{lemm}\label{nonpower6}
{\rm (cf.\ \cite[Statement~1 of Lemma~2.3]{EKP})}
If both $P$ and $Q$ are $\A$-subgroups, then 
for each basic set $X \in \cS(\A)$, $\pr_P(X) \in \cS(\A)$. 
\end{lemm}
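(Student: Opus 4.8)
The plan is to reduce the statement to two claims: that $\pr_P(X)$ is an $\A$-set, and that it is actually a single basic set rather than a union of several. Throughout I would lean on the elementary observation that every $\A$-set $T$ is a disjoint union of basic sets. Indeed, writing $\underline{T}=\sum_{Y\in\cS(\A)}a_Y\underline{Y}$ and comparing the $0/1$ coefficients of $\underline{T}$ on the (pairwise disjoint) supports of the $\underline{Y}$ forces $a_Y\in\{0,1\}$, so $T=\bigsqcup_{a_Y=1}Y$. Consequently every basic set is either contained in $T$ or disjoint from it. I would invoke this twice: once for the $\A$-subgroup $P$, and once for certain sets of the form $Y\times Q$.

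First I would show that $\pr_P(X)$ is an $\A$-set. Identifying $P$ with $P\times\{e\}\le G$, a direct check gives $\pr_P(X)\cap X=X\cap P$, and hence the disjoint decomposition
\[
\pr_P(X)=\bigl(\pr_P(X)\setminus X\bigr)\sqcup\bigl(X\cap P\bigr).
\]
The first summand is an $\A$-set by Lemma~\ref{nonpower5}, which is stated in exactly the present generality. The second summand is an $\A$-set because $P$ is an $\A$-subgroup, so by the observation above $X\cap P\in\{X,\varnothing\}$. Being a disjoint union of two $\A$-sets, $\pr_P(X)$ is then an $\A$-set. (When $X\subseteq P$ the whole statement is immediate, since $\pr_P(X)=X$; so the substantive case is $X\cap P=\varnothing$.)

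The remaining and main step is to upgrade ``$\pr_P(X)$ is an $\A$-set'' to ``$\pr_P(X)$ is a single basic set.'' Here I would write $\pr_P(X)=\bigsqcup_i Y_i$ as a disjoint union of basic sets $Y_i\subseteq P$ and lift each $Y_i$ back to $G$. Since $Q$ is an $\A$-subgroup and $Y_i\subseteq P$, distinct elements of $Y_i$ lie in distinct $Q$-cosets, so $\underline{Y_i}\,\underline{Q}=\underline{Y_i\times Q}$ with all coefficients equal to $1$; thus each $Y_i\times Q$ is an $\A$-set. These sets are pairwise disjoint, their union is $\pr_P(X)\times Q$, and the latter contains $X$. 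Because each $Y_i\times Q$ is a union of basic sets while $X$ is a single basic set, $X$ must sit inside exactly one of them, say $X\subseteq Y_{i_0}\times Q$. Projecting yields $\pr_P(X)\subseteq Y_{i_0}$, and combined with $\pr_P(X)=\bigsqcup_i Y_i\supseteq Y_{i_0}$ this forces all other $Y_i$ to be empty, so $\pr_P(X)=Y_{i_0}\in\cS(\A)$.

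I expect the last step to be the main obstacle: the passage from membership in $\A$ to atomicity. The decisive idea is to lift the candidate pieces $Y_i\subseteq P$ to the full group via products with $Q$. It is precisely the complemented coprime structure $G=P\times Q$ together with $Q$ being an $\A$-subgroup that makes these lifts $Y_i\times Q$ genuine $\A$-sets partitioning $\pr_P(X)\times Q$; the indecomposability of the basic set $X$ then collapses the partition to a single block. (Note that Lemma~\ref{nonpower5} alone, without the hypothesis that both $P$ and $Q$ be $\A$-subgroups, would only deliver the weaker conclusion that $\pr_P(X)$ is an $\A$-set.)
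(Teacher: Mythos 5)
Your proof is correct. Note that the paper itself contains no proof of this lemma: it is imported from \cite[Statement~1 of Lemma~2.3]{EKP}, so there is no in-paper argument to match yours against; I therefore compare it with the standard argument behind that citation. Your decisive second step --- lifting each basic set $Y_i\subseteq\pr_P(X)$ to the set $Y_iQ$, which is an $\A$-set because $\underline{Y_i}\,\underline{Q}\in\A$ has $0/1$ coefficients (this is where the hypothesis on $Q$ enters), observing that these sets partition $\pr_P(X)Q\supseteq X$, and concluding that the basic set $X$ lies in a single block $Y_{i_0}Q$, whence $\pr_P(X)\subseteq Y_{i_0}\subseteq\pr_P(X)$ --- is exactly the standard mechanism for such projection statements, and it is carried out correctly. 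Where you differ is Step~1: routing the $\A$-set property of $\pr_P(X)$ through Lemma~\ref{nonpower5} and the dichotomy for $X\cap P$ is valid but more roundabout than necessary. Since the support of any element of $\A$ is an $\A$-set (the basic sets have pairwise disjoint supports), the support $XQ=\pr_P(X)Q$ of $\underline{X}\,\underline{Q}\in\A$ is an $\A$-set, and intersecting with the $\A$-subgroup $P$ (an intersection of $\A$-sets is an $\A$-set) gives at once that $\pr_P(X)$ is an $\A$-set, with no case analysis and no appeal to Lemma~\ref{nonpower5}. One small inaccuracy in your closing aside: without the hypothesis that $P$ is an $\A$-subgroup, Lemma~\ref{nonpower5} does \emph{not} by itself deliver that $\pr_P(X)$ is an $\A$-set (your own Step~1 uses that hypothesis to handle the summand $X\cap P$); the accurate division of labour is that $P$ being an $\A$-subgroup yields the $\A$-set property, while $Q$ being an $\A$-subgroup yields atomicity.
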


\section{Proof of~Theorem~\ref{main}}

In view of Lemma~\ref{cimin}, it is sufficient to prove the following 
theorem:

\begin{theo}\label{main2}
Let $G=P \times Q$, where $P \cong C_p^4$ and $Q \cong  C_q$ 
for distinct primes $p$ and $q$, and let 
$K \in \Sup_2^{\rm min}(G_\r)$. 
Then $V(K, G)$ is a $\CI$-S-ring.
\end{theo}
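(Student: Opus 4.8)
The plan is to describe the structure of the schurian S-ring $\A=V(K,G)$ completely enough to recognise it, in every case, either as a star product or as a nontrivial generalized wreath product, and then to invoke the corresponding $\CI$-criteria from Section~4. Throughout I keep $P\cong C_p^4$, $Q\cong C_q$, and I write $\bar{P}$ for the unique maximal $\A$-subgroup contained in $P$ and $\bar{Q}$ for the least $\A$-subgroup containing $Q$, as in Section~6; since $|G|=p^4q$ is of non-powerful composite order, the results of that section apply. If $\A$ has no nontrivial proper $\A$-subgroup, then $\A$ has rank~$2$ by Lemma~\ref{primitive} and is trivially a $\CI$-S-ring, so I may assume otherwise. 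The feature that makes the whole argument close is that each factor arising in a decomposition is automatically a $\CI$-S-ring: restrictions and quotients isomorphic to sections of $C_p^3\times C_q$ are $\CI$ by Remark~\ref{cisrings}, while Lemma~\ref{minpring} shows that a quotient $\A_{G/\bar{Q}}$ is a $p$-S-ring over an elementary abelian $p$-group of rank at most~$4$, which is again $\CI$ by Remark~\ref{cisrings}.

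The first case is $\bar{P}<P$. Here Lemma~\ref{nonpower4} offers exactly two structures. If $\A=\A_{\bar{P}}\wr\A_{G/\bar{P}}$ with $\rk(\A_{G/\bar{P}})=2$, then the defining section is degenerate ($U=L=\bar{P}$), both factors are $\CI$-S-rings, and $\A$ is a $\CI$-S-ring by Lemma~\ref{cigwr} (the group $\aut_S(\A_S)$ being trivial). Otherwise $\A=\A_{\bar{P}\bar{Q}}\wr_S\A_{G/\bar{Q}}$ is a nontrivial $S$-wreath product with $S=\bar{P}\bar{Q}/\bar{Q}$ an elementary abelian $p$-group of rank at most~$3$, and with both factors $\CI$. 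Since $\A_{G/\bar{Q}}$ is a $p$-S-ring, Lemma~\ref{p3} shows $\A_S$ is cyclotomic, so as long as $\A_S$ is Cayley minimal I close this case by Lemma~\ref{cicayleymin}. By Lemma~\ref{p3caymin} this disposes of everything except the single isomorphism type $\A_S\cong\Z C_p\wr\Z C_p\wr\Z C_p$, which I isolate as the hard case below.

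The second case is $\bar{P}=P$, where $P$ is a maximal $\A$-subgroup of index~$q$ and $\A_{G/P}$ is an S-ring over $C_q$. Here I intend to apply Lemma~\ref{minnorm} with $L=P$: the cyclic quotient $\A_{G/P}$ is a $\CI$-S-ring, and it is $2$-minimal whenever it has rank greater than~$2$; moreover, with $K'=\aut(\A)_{G/P}G_{\r}$, the S-ring $V(K',G)$ has the group ring as its $G/P$-quotient and hence decomposes as the tensor (star) product of $\A_P$ with $\Z C_q$, which is a $\CI$-S-ring by Lemmas~\ref{tenspr} and~\ref{cistar} because $\A_P$ over $C_p^4$ is $\CI$. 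The residual configurations, in which $\A_{G/P}$ has rank~$2$ or $Q<\bar{Q}$, are resolved by the star-product decompositions $\A_{\bar{P}\bar{Q}}=\A_{\bar{P}}\star\A_{\bar{Q}}$ furnished by Lemmas~\ref{nonpower2} and~\ref{nonpower3}, together with Lemma~\ref{cistar}.

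The main obstacle is the exceptional isomorphism type $\A_S\cong\Z C_p\wr\Z C_p\wr\Z C_p$ left over from the first case, where $\A_S$ is not Cayley minimal, so Lemma~\ref{cicayleymin} no longer applies. A count of ranks forces $\bar{Q}=Q$, whence $G/\bar{Q}\cong C_p^4$ and $\A_{G/\bar{Q}}$ is a decomposable $p$-S-ring over $C_p^4$ possessing an $\A$-subgroup $U$ of order~$p^3$ with $\A_U\cong\Z C_p\wr\Z C_p\wr\Z C_p$, where $U$ corresponds to the section $S$. This is the configuration analysed in Lemma~\ref{p3exceptcase}. When that lemma gives $\aut_{G/\bar{Q}}(\A_{G/\bar{Q}})^U=\aut_U(\A_U)$, this is, after identifying $U$ with $S$, precisely the automorphism-group equality that lets me invoke Lemma~\ref{cigwr} and conclude that $\A$ is a $\CI$-S-ring. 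In the one residual configuration of Lemma~\ref{p3exceptcase}, where $\A$ has subgroups $U_1$ with $\A_{U_1}\cong\D_p$ and $U_2$ with $\A_{U_2}\cong\Z C_p\wr\Z C_p^2$ and every basic set outside $U_1\cup U_2$ is a coset of the order-$p^2$ subgroup $V$, the S-ring $\A$ is a star product along $V$, so Lemma~\ref{cistar} completes the proof. The whole difficulty is concentrated in this $\Z C_p\wr\Z C_p\wr\Z C_p$ case, whose treatment depends entirely on the structural information recorded in Lemma~\ref{p3exceptcase}.
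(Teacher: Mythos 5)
Your reduction follows the skeleton of the paper's own argument (its Lemmas~\ref{step1}--\ref{step3}) up to the point where Lemma~\ref{p3exceptcase} enters, but the final step of your proposal --- where the entire difficulty of the theorem is concentrated --- does not work. Lemma~\ref{p3exceptcase} is a statement about the quotient $\A_{G/Q}$ over $C_p^4$: the subgroups $U_1,U_2$ and the property ``every basic set outside $U_1\cup U_2$ is a $V$-coset'' concern basic sets of $\A_{G/Q}$, not of $\A$. Even if $\A_{G/Q}$ admitted a star decomposition, Lemma~\ref{cistar} applied to it would only show that $\A_{G/Q}$ is a $\CI$-S-ring, which is already known (Remark~\ref{cisrings}) and is not sufficient: $\A$ is the nontrivial $S$-wreath product $\A_{\bar{P}Q}\wr_S\A_{G/Q}$, and the $\CI$ property of both factors does \emph{not} imply the $\CI$ property of $\A$; one needs the compatibility condition $\aut_S(\A_S)=\aut_U(\A_U)^S\aut_{G/L}(\A_{G/L})^S$ of Lemma~\ref{cigwr}, which is exactly what is unavailable in this residual case. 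Moreover, $\A$ itself is genuinely not a star product here. Take, for instance, $\A_{\bar{P}Q}=\A_{\bar{P}}\otimes\Z Q$ with $\A_{\bar{P}}\cong\Z C_p\wr\Z C_p\wr\Z C_p$, glued to an exceptional quotient $\A_{G/Q}$; this is a legitimate instance of the configuration. Then every basic set of $\A$ inside $\bar{P}Q$ meets each $Q$-coset in at most one element, while every basic set outside $\bar{P}Q$ is a union of $Q$-cosets. Checking the definition of the star product against these two families, one finds that conditions (2) and (3) cannot hold simultaneously for any pair of $\A$-subgroups: the single-$Q$-level basic sets of radical $V$ inside $\bar{P}Q$ force one factor to be $\bar{P}$ or $\bar{P}Q$, and then the basic sets in $U_2\setminus\bar{P}Q$ (unions of $Q$-cosets of size $pq$) cannot be written as products $YZ$ of basic sets of the two factors. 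This is precisely why the paper abandons product decompositions at this point and instead proves Lemma~\ref{technical} and then, in two long cases, constructs for every $f\in\iso_e(\A)$ an explicit $\varphi\in\aut(G)$ with $X^{f\varphi^{-1}}=X$ for all $X\in\cS(\A)$, invoking Lemma~\ref{trans}. Your proposal has no counterpart for this part of the proof, which is its real content.

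There are also earlier gaps. You assert that in the hard case $\A_{G/\bar{Q}}$ is decomposable; this does not follow from $\A$ being an $S$-wreath product over $G$, and when $\A_{G/Q}$ is indecomposable your appeal to Lemma~\ref{cicayleymin} collapses: that lemma requires one of the \emph{factors} $\A_U$, $\A_{G/L}$ (not $\A_S$) to be cyclotomic, and an indecomposable $p$-S-ring over $C_p^4$ need not be. The paper treats this case separately, via Lemmas~\ref{minnorm} and~\ref{p4minimal} (its Lemma~\ref{step2}). Similarly, Lemma~\ref{p3exceptcase} has the hypothesis that $\A_{G/Q}$ has a basic set outside $S$ with trivial radical; you never verify it (the paper's Lemma~\ref{step3} does). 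Finally, in the case $\bar{P}=P$, your application of Lemma~\ref{tenspr} is invalid: knowing that all basic sets of $V(K',G)$ lie in $P$-cosets gives $V(K',G)_{G/P}\cong\Z C_q$, a statement about the \emph{quotient}, whereas Lemma~\ref{tenspr} needs $Q$ to be a $V(K',G)$-subgroup whose restriction is a group ring; a priori $V(K',G)$ could be $\Z P\wr\Z(G/P)$, which is not a tensor product. This last defect is repairable (e.g.\ by Lemma~\ref{nonpower3}, or by the paper's route through Lemma~\ref{minpring}, which uses the $2$-minimality of $K$ to force $\A_{G/P}\cong\Z C_q$ outright), but the missing treatment of the indecomposable case and, above all, of the residual wreath configuration are genuine holes.
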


The proof of the theorem consists of two main steps.  
It will be shown first that the S-ring 
$V(K, G)$ in Theorem~\ref{main2} is $\CI$, unless it is a 
generalized wreath product with certain properties 
(Lemmas~\ref{step1}-\ref{step3}).
Then the latter generalized wreath product 
will be shown to be $\CI$ after an exhaustive computation.

\begin{lemm}\label{step1}
With the notations in Theorem~\ref{main2},  let 
$\A=V(K, G)$. Then one of the following conditions holds:
\begin{enumerate}[{\rm (1)}]
\item $\A$  is a $\CI$-S-ring.  
\item $\A= \A_{\bar{P}Q}\, \wr_S\, \A_Q$,
where $S=\bar{P}Q/Q$, $\bar{P}$ is the maximal $\A$-sugroup contained in $P$ and $\bar{P} < P$. 
\end{enumerate}
\end{lemm}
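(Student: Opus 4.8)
The plan is to run a case analysis on the two canonical $\A$-subgroups attached to the non-powerful factorization $G = P \times Q$, namely $\bar P$ (the maximal $\A$-subgroup inside $P$) and $\bar Q$ (the least $\A$-subgroup containing $Q$), and to show that every configuration except the one in (2) forces $\A$ to be a star or wreath product whose factors are already known to be $\CI$. Throughout I will use that $\A = V(K,G)$ is schurian, that $G \in \E$, and that every schurian S-ring over a proper subgroup or proper quotient of $G$ is a $\CI$-S-ring, since all such sections live over $C_p^{\le 4}$ or over $C_p^{\le 3}\times C_q$ (Remark~\ref{cisrings}).

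First I would dispose of the case $\bar P = P$. Here $G/P \cong C_q$, so Lemma~\ref{minpring} (with $H = P$) shows $\A_{G/P}$ is a $q$-S-ring over $C_q$; as a basic set of $q$-power size inside $C_q$ can only be a singleton, $\A_{G/P} \cong \Z C_q$. Since $\bar P\bar Q = G$, Lemma~\ref{nonpower3} then gives the star product $\A = \A_P \star \A_{\bar Q}$, whose factors $\A_P$ (over $C_p^4$) and $\A_{\bar Q/(P\cap\bar Q)}$ (over $C_q$) are $\CI$; hence Lemma~\ref{cistar} yields that $\A$ is a $\CI$-S-ring, i.e.\ (1) holds.

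Assume now $\bar P < P$ and apply Lemma~\ref{nonpower4}. If its alternative (1) holds, then $\A = \A_{\bar P} \wr \A_{G/\bar P}$ is an ordinary wreath product with $\rk(\A_{G/\bar P}) = 2$; since $\A_{\bar P}$ (over $C_p^{\le 3}$) and the rank-$2$ ring $\A_{G/\bar P}$ are $\CI$, the $U=L$ remark following Lemma~\ref{cigwr} (or, when $\bar P = \{e\}$, the fact that $\A$ is the rank-$2$ ring over $G$) gives (1). If alternative (2) holds, then $\A = \A_{\bar P\bar Q} \wr_S \A_{G/\bar Q}$ with $S = \bar P\bar Q/\bar Q$ and $\bar Q < G$. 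When $\bar Q = Q$ this is precisely conclusion (2) of the lemma, the product being proper because $\bar P < P$ forces $\bar PQ \ne G$; so it remains to handle $\bar Q > Q$.

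The residual case $\bar P < P$, $\bar Q > Q$ is the one I expect to be the main obstacle. Writing $R = \bar Q \cap P \ne \{e\}$, I would split on whether $\bar P\bar Q = G$. If $\bar P\bar Q < G$, then $\A_{\bar P\bar Q}$ and $\A_{G/\bar Q}$ sit over proper sections and are $\CI$; moreover $\A_{G/\bar Q}$ is a $p$-S-ring over $C_p^{\le 3}$ by Lemma~\ref{minpring}, hence cyclotomic by Lemma~\ref{p3}, and its section $\A_S$ is a $p$-S-ring over $C_p^{\le 3}$ that is Cayley minimal by Lemma~\ref{p3caymin}, \emph{unless} $\A_S \cong \Z C_p \wr \Z C_p \wr \Z C_p$. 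A dimension count shows this exception forces $|S| = p^3$, whence $\langle \bar P, R\rangle = P$ and $\bar P\bar Q = G$, contradicting $\bar P\bar Q < G$; thus $\A_S$ is Cayley minimal and Lemma~\ref{cicayleymin} gives that $\A$ is $\CI$. Finally, if $\bar P\bar Q = G$, then $R \not\le \bar P$ and $\bar P \ne \{e\}$ (both consequences of $\bar P\bar Q = G$ together with $\bar P < P$ and $\bar Q < G$), so Lemma~\ref{nonpower2} yields the star product $\A = \A_{\bar P} \star \A_{\bar Q}$; the identity $\langle \bar P, R\rangle = P$ bounds the $p$-rank of $\bar Q/(\bar P\cap R)$ by $4 - \dim \bar P \le 3$, so both $\A_{\bar P}$ and $\A_{\bar Q/(\bar P\cap R)}$ are $\CI$ and Lemma~\ref{cistar} gives (1). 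This exhausts all cases, and the only surviving configuration is (2).
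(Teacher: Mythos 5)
Your proposal is correct and follows essentially the same route as the paper's proof: dispose of $\bar{P}=P$ via Lemma~\ref{minpring} together with Lemmas~\ref{nonpower3} and~\ref{cistar}, apply Lemma~\ref{nonpower4} when $\bar{P}<P$, and in its second alternative force $\bar{Q}=Q$ by eliminating $\bar{Q}>Q$ through Lemma~\ref{nonpower2} with Lemma~\ref{cistar} (when $\bar{P}\bar{Q}=G$) and Lemmas~\ref{p3}, \ref{p3caymin}, \ref{cicayleymin} (when $\bar{P}\bar{Q}<G$). The only deviations are cosmetic: you split the cases in the opposite order (first $\bar{Q}=Q$ versus $\bar{Q}>Q$, then $\bar{P}\bar{Q}$), and you justify Cayley minimality of $\A_S$ by showing the exceptional case $\A_S \cong \Z C_p \wr \Z C_p \wr \Z C_p$ would force $|S|=p^3$ and hence $\bar{P}\bar{Q}=G$, whereas the paper directly bounds $|S|\le p^2$.
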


\begin{proof} 
Assume that $\A$ is a non-$\CI$-S-ring. We have to 
show that case (2) holds. 

Let $\bar{Q}$ be the least $\A$-subgroup containing $Q$.
Assume first that $\bar{P}=P$. Then $\A_{G/P}$ is a $q$-$S$-ring by Lemma~\ref{minpring}. 
Lemma~\ref{p3} implies that $\A_{G/P} \cong \Z C_q$. Clearly, 
$G=P\bar{Q}$. It follows from Lemma~\ref{nonpower3} that 
$\A=\A_P \star \A_{\bar{Q}}$. 
Then $\A$ is a 
$\CI$-S-ring by Remark~\ref{cisrings} and Lemma~\ref{cistar}, 
contradicting our assumption that $\A$ is a non-$\CI$-S-ring. 

Thus $\bar{P} < P$, and Lemma~\ref{nonpower4} can be applied to 
$\A$. Suppose that case~(1) of Lemma~\ref{nonpower4} holds for $\A$, that is, 
$$
\A= \A_{\bar{P}}\, \wr\, \A_{G/\bar{P}},
$$ 
where $\rk(\A_{G/\bar{P}})=2$. If $\bar{P}$ is trivial, then
$\rk(\A)=2$ and $\A$ is a $\CI$-S-ring. If $\bar{P}$ is nontrivial, then 
$\A$ is a $\CI$-S-ring by Remark~\ref{cisrings} and 
Lemma~\ref{cigwr}, which is impossible.

Thus case~(2) of Lemma~\ref{nonpower4} holds for 
$\A$, that is,  
$$
\A=\A_{\bar{P} \bar{Q}}\, \wr_S\, \A_{G/\bar{Q}},
$$ 
where $S=\bar{P} \bar{Q}/\bar{Q}$ and $\bar{Q} < G$. 
Notice that, we are done if we show that 
$\bar{Q}=Q$.

Assume for the moment that $\bar{P} \bar{Q}=G$. 
Then $\bar{P} \ne (\bar{P} \bar{Q})_{q'}=P$, hence by 
Lemma~\ref{nonpower2}, $\A=\A_{\bar{P}} \star \A_{\bar{Q}}$.
Since $\bar{P} < G$ and $\bar{Q} < G$, the S-ring $\A$ is a 
$\CI$-S-ring by Remark~\ref{cisrings} and Lemma~\ref{cistar}. 

Thus $\bar{P} \bar{Q} < G$, and therefore,  
$\A$ is the nontrivial $S$-wreath product. The group $G/\bar{Q}$ is a $p$-group of order at most $p^4$ because $\bar{Q} \geq Q$. Lemma~\ref{minpring} yields that 
$\A_{G/\bar{Q}}$ and $\A_S$ are $p$-S-rings. If 
$|G/\bar{Q}| \leq p^3$ then the S-rings 
$\A_{G/\bar{Q}}$ and $\A_S$ are cyclotomic by Lemma~\ref{p3}. Since $|S| \leq p^2$, the S-ring $\A_S$ is Cayley minimal by Lemma~\ref{p3caymin}. It follows from Lemma~\ref{cicayleymin} that 
$\A$ is a $\CI$-S-ring. 
Thus, $|G/\bar{Q}|=p^4$ and hence $\bar{Q}=Q$. 
\end{proof}

\begin{lemm}\label{step2}
With the notations in Lemma~\ref{step1}, suppose that case (2) 
holds. Then one of the following conditions holds: 
\begin{enumerate}[{\rm (1)}]
\item $\A$  is a $\CI$-S-ring.  
\item $\A_{G/Q}$ is decomposable and 
$\A_S \cong \Z_p C_p \wr \Z_p C_p \wr \Z_p C_p$.
\end{enumerate}
\end{lemm}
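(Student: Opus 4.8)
The plan is to split the argument according to whether the $p$-S-ring $\A_{G/Q}$ over $G/Q\cong C_p^4$ is decomposable, and to prove that $\A$ is a $\CI$-S-ring in every situation except the one recorded in item~(2). First I would assemble the common ingredients. Since $\A$ is schurian, both $\A_{\bar PQ}$ (a schurian S-ring over the proper subgroup $\bar PQ\cong C_p^{\le 3}\times C_q$) and $\A_{G/Q}$ (a schurian S-ring over $C_p^4$) are $\CI$-S-rings by Remark~\ref{cisrings}. By Lemma~\ref{minpring}, $\A_{G/Q}$ is a $p$-S-ring, and therefore so is its restriction $\A_S$ to the $\A_{G/Q}$-subgroup $S=\bar PQ/Q\cong\bar P$, which has rank at most $3$. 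Lemma~\ref{p3} then gives that $\A_S$ is cyclotomic, while Lemma~\ref{p3caymin} shows that $\A_S$ is Cayley minimal unless $\A_S\cong \Z C_p\wr \Z C_p\wr \Z C_p$. (If $\bar P=\{e\}$ the $S$-wreath product is trivial and $\A$ is $\CI$ at once, so I may assume $S$ is nontrivial.)

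\emph{Decomposable branch.} Suppose $\A_{G/Q}$ is decomposable. Then $\A_{G/Q}$ is cyclotomic by Lemma~\ref{p4decomp}. If $\A_S$ is Cayley minimal, I would apply Lemma~\ref{cicayleymin} to the nontrivial $S$-wreath product $\A=\A_{\bar PQ}\wr_S\A_{G/Q}$, using that the factor $\A_{G/L}=\A_{G/Q}$ is cyclotomic; this yields that $\A$ is a $\CI$-S-ring. Otherwise $\A_S\cong \Z C_p\wr \Z C_p\wr \Z C_p$, and together with the decomposability of $\A_{G/Q}$ this is precisely item~(2). Thus the decomposable branch is settled.

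\emph{Indecomposable branch (the main obstacle).} Suppose $\A_{G/Q}$ is indecomposable. Then it is $2$-minimal by Lemma~\ref{p4minimal}, and it is $\CI$; so I would invoke Lemma~\ref{minnorm} with $L=Q$ and $K=\aut(\A)_{G/Q}G_\r$, for which the only nonobvious hypothesis is that $V(K,G)$ is a $\CI$-S-ring. This is the heart of the proof. Writing $\A'=V(K,G)$, the inclusion $K\le\aut(\A)$ gives $\A\subseteq\A'$, so $Q$ is an $\A'$-subgroup; moreover $\A'_{G/Q}=V(K^{G/Q},G/Q)=V((G/Q)_\r,G/Q)=\Z(G/Q)$, the full group ring, because $\aut(\A)_{G/Q}$ acts trivially on the $Q$-cosets while $G_\r$ induces $(G/Q)_\r$. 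A direct inspection of the orbits of $K_e$ (using $Q\lhd G$) shows in addition that every $\A'$-basic set lies inside a single $Q$-coset, so $\A'_P=\Z P$ whenever $P$ is an $\A'$-subgroup. Since $\bar Q'=Q$, I would now apply Lemma~\ref{nonpower4} to $\A'$: if $\bar P'=P$ then $\A'=\Z P\otimes\A'_Q$ by Lemma~\ref{tenspr} and is $\CI$ by Lemma~\ref{cistar} and Remark~\ref{cisrings}; otherwise $\bar P'<P$, and Lemma~\ref{nonpower4} leaves either a wreath product with a rank-$2$ top (immediately $\CI$ via Lemma~\ref{cigwr}) or $\A'=\A'_{\bar P'Q}\wr_{S'}\A'_{G/Q}$ with $S'=\bar P'Q/Q$. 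In the latter case $\A'_{G/Q}=\Z(G/Q)$ forces $\A'_{S'}=\Z S'$, so $\aut_{S'}(\A'_{S'})$ is trivial and the compatibility condition of Lemma~\ref{cigwr} holds vacuously; as $\A'_{\bar P'Q}$ (schurian, over a proper subgroup) and $\A'_{G/Q}$ are $\CI$, Lemma~\ref{cigwr} gives that $\A'$ is $\CI$. With $V(K,G)$ shown to be a $\CI$-S-ring, Lemma~\ref{minnorm} yields that $\A$ is $\CI$.

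Combining the two branches, $\A$ is a $\CI$-S-ring whenever $\A_{G/Q}$ is indecomposable, and also whenever $\A_{G/Q}$ is decomposable with $\A_S$ Cayley minimal; the only surviving possibility is that $\A_{G/Q}$ is decomposable and $\A_S\cong \Z C_p\wr \Z C_p\wr \Z C_p$, which is exactly item~(2), completing the proof. I expect the decisive difficulty to be the indecomposable branch, namely controlling the structure of $V(K,G)$ well enough to feed it into the wreath and star $\CI$-criteria; the key simplification there is that the quotient $\A'_{G/Q}$ collapses to the group ring $\Z(G/Q)$, which trivializes the automorphism-compatibility condition of Lemma~\ref{cigwr}.
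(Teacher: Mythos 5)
Your proposal is correct and follows essentially the same route as the paper: in the indecomposable case, $2$-minimality (Lemma~\ref{p4minimal}) plus Lemma~\ref{minnorm} with $K=\aut(\A)_{G/Q}G_\r$, where $V(K,G)$ is shown to be $\CI$ exactly as in the paper (basic sets lie in $Q$-cosets, then Lemma~\ref{tenspr}/Lemma~\ref{cistar} when $\bar P'=P$ and Lemma~\ref{nonpower4} plus a wreath criterion when $\bar P'<P$); in the decomposable case, Lemma~\ref{p4decomp} with Lemmas~\ref{cicayleymin} and \ref{p3caymin}. The only cosmetic difference is that where the paper invokes Lemma~\ref{cicayleymin} for the wreath product with group-ring quotient, you apply Lemma~\ref{cigwr} directly via the triviality of $\aut_{S'}(\A'_{S'})$, which amounts to the same thing.
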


\begin{proof}
Suppose that $\A_{G/Q}$ is indecomposable. Then $\A_{G/Q}$ is 
$2$-minimal by Lemma~\ref{p4minimal}. Let 
$K=\aut(\A)_{G/Q}G_{\rm right}$ and $\B=V(K,\, G)$. 

We show next that $\B$ is a $\CI$-S-ring. 
The definition of $\B$ implies that every $\A$-subgroup is also 
a $\B$-subgroup. Let $\tilde{P}$ be the largest 
$\B$-subgroup contained in $P$. Then $\bar{P} \le \tilde{P}$, 
and the section $\tilde{S}:=\tilde{P}Q/Q$ is a 
$\B$-section. From the definition of $K$ it follows that every basic set of $\B$ is contained in a $Q$-coset, implying that 
$\B_{\tilde{P}}=\Z \tilde{P}$. Thus, if $\tilde{P}=P$, 
then $\B=\B_Q \otimes \B_{\tilde{P}}=
\B_Q \star \B_{\tilde{P}}$ by Lemma~\ref{tenspr}, 
and $\B$ is a $\CI$-S-ring by Remark~\ref{cisrings} and 
Lemma~\ref{cistar}. 
Let $\tilde{P} < P$. Then, by Lemma~\ref{nonpower4}, 
$\B$ is the nontrivial $\tilde{S}$-wreath poduct.   
Since every basic set of $\B$ is contained in a $Q$-coset, 
$\B_{G/Q}=\Z (G/Q)$ and hence $\B_{\tilde{S}}=\Z \tilde{S}$.
By Lemma~\ref{cicayleymin}, $\B$ is a $\CI$-S-ring. 

The S-ring $\A_{G/Q}$ is a $\CI$-S-ring by Remark~\ref{cisrings}. 
All conditions of Lemma~\ref{minnorm} hold for $\A$, $\A_{G/Q}$ and $\B$ and hence $\A$ is a $\CI$-S-ring.

Now, suppose that $\A_{G/Q}$ is decomposable. 
Then $\A_{G/Q}$ is cyclotomic by Lemma~\ref{p4decomp}. If $\A_S$ is Cayley minimal then we are done by Lemma~\ref{cicayleymin}. So we may assume that 
$\A_S$ is not Cayley minimal. Since $|S| \leq p^3$, Lemma~\ref{p3caymin} implies that $|S|=p^3$ and 
$\A_S \cong \Z C_p \wr \Z C_p \wr \Z C_p$. 
\end{proof}

\begin{lemm}\label{step3}
With the notations in Lemma~\ref{step2}, suppose that case (2) 
holds. Then one of the following conditions holds: 
\begin{enumerate}[{\rm (1)}]
\item $\A$  is a $\CI$-S-ring.  
\item $\A_{G/Q}$ has a basic set outside $S$ with trivial radical.
\end{enumerate}
\end{lemm}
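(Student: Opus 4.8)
The plan is to establish alternative~(1) under the assumption that (2) fails, i.e.\ that every basic set of $\A_{G/Q}$ lying outside $S$ has nontrivial radical. Write $H=G/Q\cong C_p^4$ and $\mathcal C=\A_{G/Q}$. By Lemma~\ref{p4decomp} the S-ring $\mathcal C$ is cyclotomic, say $\mathcal C=\cyc(M,H)$ with $M=\aut_{G/Q}(\mathcal C)$, so that $\mathcal C_S=\cyc(M^S,S)$ and $M^S\le\aut_S(\mathcal C_S)$. Since $\A_{\bar P Q}$ (an S-ring over $C_p^3\times C_q$) and $\A_{G/Q}$ (an S-ring over $C_p^4$) are both $\CI$-S-rings by Remark~\ref{cisrings}, the ``in particular'' clause of Lemma~\ref{cigwr} reduces the whole claim to the single equality $M^S=\aut_S(\A_S)$. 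Fixing an $\A_S$-basis $(a,b,c)$, putting $A=\langle a\rangle$, $V=\langle a,b\rangle$ and extending to $H=\langle a,b,c,d\rangle$, the structure $\A_S\cong\Z C_p\wr\Z C_p\wr\Z C_p$ shows that $\aut_S(\A_S)$ fixes $A$ pointwise and stabilizes the flag $A<V<S$; combined with $|\aut_S(\A_S)|=p^3$ from Lemma~\ref{p3notcaymin}, this identifies $\aut_S(\A_S)$ with the full group of upper unitriangular matrices in the basis $(a,b,c)$, generated by $\beta_1\colon b\mapsto ba$, $\beta_2\colon c\mapsto ca$ and $\beta_3\colon c\mapsto cb$.

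The crucial input is to locate $\O_\theta(\mathcal C)$ together with the radicals. The assumption yields no singleton basic set outside $S$, whence $\O_\theta(\mathcal C)=\O_\theta(\mathcal C_S)=A$ has order $p$. A short counting argument --- the number of singleton basic sets of a $p$-S-ring over a nontrivial elementary abelian $p$-group is divisible by $p$, hence positive --- shows that every nontrivial $\mathcal C$-subgroup $R$ has nontrivial thin radical $\O_\theta(\mathcal C)\cap R=A\cap R$, forcing $A\le R$. Applying this to $R=\rad(X)$ for a basic set $X$ outside $S$ (nontrivial by assumption) gives $A\le\rad(X)$; that is, every basic set outside $S$ is a union of $A$-cosets.

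With this in hand I would finish as follows. Extend $\beta_1,\beta_2$ to $\tilde\beta_1,\tilde\beta_2\in\aut(H)$ fixing $d$; both act trivially modulo $A$, so they fix every $A$-coset setwise and therefore preserve every basic set outside $S$, while on $S$ they induce $\beta_1,\beta_2$. Hence $\tilde\beta_i\in M$ and $\langle\beta_1,\beta_2\rangle\le M^S$, a subgroup of order $p^2$ consisting of the unitriangular matrices with vanishing $(2,3)$-entry. On the other hand $cV$ is a single basic set of $\mathcal C_S=\cyc(M^S,S)$, so $M^S$ is transitive on the $p^2$ elements of $cV$; writing $c^\gamma=a^{\gamma_{13}}b^{\gamma_{23}}c$ this forces $\{(\gamma_{13},\gamma_{23}):\gamma\in M^S\}=\mathbb F_p^2$, and in particular some $\gamma\in M^S$ has $\gamma_{23}\ne0$. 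Thus $M^S$ strictly contains $\langle\beta_1,\beta_2\rangle$, so $M^S=\aut_S(\A_S)$, and $\A$ is a $\CI$-S-ring by Lemma~\ref{cigwr}.

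The main obstacle is the middle step: one must pin down $\O_\theta(\mathcal C)$ and, more importantly, prove that the nontrivial radical of every basic set outside $S$ contains the \emph{specific} subgroup $A$ rather than merely being nonzero. This is precisely what makes the trivial-radical alternative in~(2) unavoidable, since it is the containment $A\le\rad(X)$ that lets the extensions $\tilde\beta_1,\tilde\beta_2$ fix the basic sets outside $S$; once $\langle\beta_1,\beta_2\rangle\le M^S$ is secured, the transitivity of $M^S$ on $cV$ supplies the remaining generator for free. (Indeed, without the assumption $M^S$ can be a proper order-$p^2$ subgroup of $\aut_S(\A_S)$ still realizing the wreath structure, which is exactly why case~(2) cannot be excluded in general.)
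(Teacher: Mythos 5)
Your proposal is correct, and while its first half parallels the paper, its endgame is genuinely different. Both arguments, assuming alternative (2) fails, first prove that $A\le\rad(X)$ for every basic set $X$ of $\A_{G/Q}$ outside $S$, where $A$ is the unique minimal $\A_S$-subgroup. The paper gets this from Lemma~\ref{minpring}: $\A_{(G/Q)/S}\cong\Z C_p$, so each such $X$ lies in an $S$-coset, its radical is a nontrivial $\A_S$-subgroup, and all of those contain $A$. You get it from a thin-radical count: $\O_\theta(\A_{G/Q})=A$ and every nontrivial $\A_{G/Q}$-subgroup has nontrivial thin radical, hence contains $A$ (your counting is right, though note that positivity of the number of singletons comes from $\{e\}$ being one of them; divisibility by $p$ then gives at least $p$). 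From there the paper changes section: the $S/A$-wreath structure of $\A_{G/Q}$ lifts to an $\bar{P}Q/\pi^{-1}(A)$-wreath decomposition of $\A$ itself, whose section has order $p^2$, so $\A_{\bar{P}Q/\pi^{-1}(A)}$ is Cayley minimal by Lemma~\ref{p3caymin} and Lemma~\ref{cicayleymin} finishes; the group $\aut_{G/Q}(\A_{G/Q})^S$ is never computed. You instead stay with the original section $S$ --- precisely where Cayley minimality fails, which is the whole difficulty --- and verify directly the hypothesis $\aut_{G/Q}(\A_{G/Q})^S=\aut_S(\A_S)$ of Lemma~\ref{cigwr}, by extending $\beta_1,\beta_2$ to automorphisms of $G/Q$ acting trivially modulo $A$ (so that they preserve all basic sets outside $S$, thanks to $A\le\rad(X)$) and extracting the missing third generator from the transitivity of the cyclotomic group $M^S$ on the basic set $cV$. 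Your route needs the cyclotomicity of $\A_{G/Q}$ (Lemma~\ref{p4decomp}, already in force after Lemma~\ref{step2}) and an explicit unitriangular computation, but it pins down $\aut_{G/Q}(\A_{G/Q})^S$ exactly; the paper's route avoids all computation by passing to a smaller section where the general Cayley-minimality machinery applies.
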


\begin{proof}
The S-ring $\A_S \cong \Z C_p \wr \Z C_p \wr \Z C_p$, 
hence there exists a unique minimal $\A_S$-subgroup in $S$, 
say $A$, which has order $p$.
In view of Lemma~\ref{minpring}, we have 
$\A_{(G/Q)/S} \cong \Z C_p$. This means that every basic set of 
$\A_{G/Q}$ outside $S$ is contained in an $S$-coset. 
So $\rad(X)$ is an $\A_S$-subgroup for every $X \in \cS(\A_{G/Q})$ outside $S$. If $|\rad(X)|>1$ for every $X \in \cS(\A_{G/Q})$ outside $S$, then $\A_{G/Q}$ is the $S/A$-wreath product because $A$ is the least $\A_S$-subgroup. Therefore, $\A$ is the $\bar{P}Q/\pi^{-1}(A)$-wreath product, where $\pi: G \rightarrow G/Q$ is the canonical epimorphism. Note that, $|\pi^{-1}(A)|=pq$ and hence $|G/\pi^{-1}(A)|=p^3$ and $|\bar{P}Q/\pi^{-1}(A)|=p^2$. The S-rings $\A_{G/\pi^{-1}(A)}$ and $\A_{\bar{P}Q/\pi^{-1}(A)}$ are cyclotomic by Lemma~\ref{p3} and the S-ring $\A_{\bar{P}Q/\pi^{-1}(A)}$ is Cayley minimal by Lemma~\ref{p3caymin}. Thus, $\A$ is a $\CI$-S-ring by Lemma~\ref{cicayleymin}.
\end{proof}

We need one more 
technical lemma. 

\begin{lemm}\label{technical}
Let $\A$ be a schurian S-ring over a group 
$H \cong C_p^3 \times C_q$, where 
$p$ and $q$ are distinct primes, and let $H_p$ and $H_q$ be 
the Sylow $p$- and $q$-subgroup of $H$, respectively. 
Suppose that $H_p$ and $H_q$ are $\A$-subgroups, 
$\A_{H_p} \cong \Z C_p \wr \Z C_p \wr \Z C_p$, 
and at least one of the following conditions holds: 
\begin{enumerate}[{\rm (1)}]
\item $\aut(\A)^{H_q}$ is non-solvable.
\item $V((\aut(\A)_{H/H_p})^{H_p},H_p)$ is decomposable. 
\end{enumerate}
Suppose, in addition, that $f\in \iso_e(\A)$ such that 
$$
a^f=a,~b^f=b,~c^f \in \langle a \rangle c~
\text{and}~d^f=d,
$$
where $(a,b,c)$ is an $\A_{H_p}$-basis and 
$H_q=\langle d \rangle$.
Then $X^f=X$ for each $X \in \cS(\A)$.
\end{lemm}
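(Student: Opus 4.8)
The plan is to deduce from the constraints on $f$ that $X^f=X$ for every $X\in\cS(\A)$; since $f$ is an isomorphism onto an S-ring over $H$, this is the same as $f\in\aut(\A)$. Write $A=\langle a\rangle$ and $V=\langle a,b\rangle$, so that $\{e\}<A<V<H_p$ is the $\A_{H_p}$-subgroup chain of $\A_{H_p}\cong\Z C_p\wr\Z C_p\wr\Z C_p$ and $\cS(\A_{H_p})=\{\,\{a^i\}\,\}\cup\{\,b^jA\,\}\cup\{\,c^kV\,\}$; as $H_p$ is an $\A$-subgroup, these are basic sets of $\A$ as well. A combinatorial isomorphism maps $\A$-subgroups onto subgroups of $H$ of the same order, so using the uniqueness of the pertinent subgroups of $H=H_p\times H_q$ I first record that $f$ fixes $H_q$ and, once the basic sets inside $H_p$ are treated, also $A$, $V$, $H_p$ and $VH_q$, setwise; moreover $f$ fixes $A$ pointwise because $a^f=a$ and $(XY)^f=X^fY^f$ for basic sets.

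First I would settle all basic sets that are not genuinely mixed. Repeatedly using $(XY)^f=X^fY^f$ together with $a^f=a$, $b^f=b$, $c^f\in Ac$ and $d^f=d$: the image $(bA)^f$ is the $A$-coset through $b$, hence $bA$, and then $(b^jA)^f=b^jA$; next $(cV)^f$ is forced to be a $V$-coset (it is invariant under right translation by $\{a\}$ and by $bA$), and it contains $c^f\in Ac\subseteq cV$, so $(cV)^f=cV$ and $(c^kV)^f=c^kV$. Thus $f$ fixes every basic set inside $H_p$. Inside $H_q$, if $\A_{H_q}\cong\Z C_q$ then $(d^i)^f=d^i$, while if $\rk(\A_{H_q})=2$ the unique nontrivial basic set is fixed because $f(H_q)=H_q$; so $f$ fixes every basic set inside $H_q$. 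Finally, by Lemma~\ref{nonpower6} (with $P=H_p$, $Q=H_q$) every $X\in\cS(\A)$ satisfies $\pr_{H_p}(X)\in\cS(\A_{H_p})$; if $\pr_{H_p}(X)=\{a^i\}$ then $X=a^i\,\pr_{H_q}(X)$ is a singleton times a basic set of $\A_{H_q}$, so $X^f=X$. It remains to treat the $X$ with $\pr_{H_p}(X)=b^jA$ (these lie in $VH_q$) and those with $\pr_{H_p}(X)=c^kV$ (these lie in $c^kVH_q$).

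A convenient tool is that $f$ respects $\A_{H/V}$ finely. Indeed $f$ fixes $V$ setwise, so it induces $f^{H/V}\in\iso_{\bar e}(\A_{H/V})$, and this map fixes the generators $cV$ and $dV$ of $H/V$ (because $c^f\in cV$ and $d^f=d$). As $H/V\cong C_p\times C_q$ is a $\CI$-group, $\A_{H/V}$ is a $\CI$-S-ring (Remark~\ref{cisrings}), and Lemma~\ref{cinormal} applied to its prime-order cyclotomic factors shows that $f^{H/V}$ fixes every basic set of $\A_{H/V}$. Consequently $X^f=X$ for every basic set $X$ with $V\le\rad(X)$, since such an $X$ is the full $\pi$-preimage of a basic set of $\A_{H/V}$, where $\pi\colon H\to H/V$. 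The basic sets inside $VH_q$ (those with $\pr_{H_p}(X)\in\{\{a^i\},b^jA\}$) are handled analogously after factoring out $A$: here $f^{VH_q}\in\iso_{\bar e}(\A_{VH_q})$ fixes the full generating set $a,b,d$ of $VH_q\cong C_p^2\times C_q$, another $\CI$-group, and since the one degree of freedom in $f$ concerns only $c$, these interior sets (which do not involve $c$) are pinned down by $a^f=a$, $b^f=b$, $d^f=d$ via the same reasoning applied to $\A_{VH_q/A}$ over $C_p\times C_q$. Thus everything is reduced to the basic sets $X$ lying outside $VH_q$, i.e.\ with $\pr_{H_p}(X)=c^kV$.

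The hard part---and the only place where hypotheses (1) and (2) enter---is precisely these exterior basic sets, where the single degree of freedom $c^f\in Ac$ lives. Equivalently, I must rule out ``diagonal'' basic sets $X$ with $\pr_{H_p}(X)=c^kV$ but $V\not\le\rad(X)$, which couple the top level of the wreath product $\A_{H_p}$ to the $H_q$-direction; whenever instead $V\le\rad(X)$ the set is already fixed by the previous paragraph. My plan is to show, under either hypothesis, that no such diagonal set occurs, so that $\A$ is in fact the generalized wreath product $\A_{VH_q}\wr_{H_q}\A_{H/V}$ and every exterior $X$ satisfies $V\le\rad(X)$. Under hypothesis (1), $\aut(\A)^{H_q}$ is a transitive group of prime degree $q$ which is non-solvable, hence $2$-transitive by Burnside's theorem; since every element of $\aut(\A)$ fixes the schurian basic set $X$ setwise while inducing this highly transitive action on the $H_q$-coordinate, $X$ cannot discriminate among the nonidentity $H_q$-cosets and must be $V$-invariant. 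Under hypothesis (2), the decomposability of $V\bigl((\aut(\A)_{H/H_p})^{H_p},H_p\bigr)$ produces a nontrivial wreath decomposition of the kernel acting on $H_p$ that is compatible with the chain $A<V<H_p$, and I would use it to force $V\le\rad(X)$ directly. In either case the exterior basic sets become $V$-invariant, hence fixed by $f$, completing the proof. I expect the passage from the control of $\aut(\A)^{H_q}$, respectively of the kernel $\aut(\A)_{H/H_p}$, to the radical condition $V\le\rad(X)$ to be the genuinely delicate point.
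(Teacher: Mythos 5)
Your preliminary reductions are sound and essentially match the paper's own first steps: the product rule $(XY)^f=X^fY^f$ pins down all basic sets inside $H_p$, and Lemma~\ref{nonpower6} settles every basic set whose $H_p$-projection is a singleton. (Two smaller omissions already appear here: you treat $\A_{H_q}$ only when it is $\Z C_q$ or of rank $2$, while for intermediate rank the paper needs normality of nontrivial S-rings over $C_q$ from \cite{EP2} together with Lemma~\ref{cinormal}; and under hypothesis (1) you apply Burnside's $2$-transitivity of $\aut(\A)^{H_q}$ directly to basic sets far from $H_q$, whereas one must first transfer non-solvability, via a characteristic-subgroup argument, to the kernel $\aut(\A)_{H/H_q}$, whose $2$-transitive action on \emph{every} $H_q$-coset is what actually yields $\A=\A_{H_p}\otimes\A_{H_q}$.)

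The core of your plan, however, fails on two counts, and they are exactly at the point you flag as delicate. First, your recurring mechanism ``$\A_S$ is a $\CI$-S-ring and $f^S$ fixes a generating set of $S$, hence $f^S$ fixes every basic set'' is invalid: CI-ness only gives $f^S=\gamma\varphi$ with $\gamma\in\aut(\A_S)_{\bar e}$ and $\varphi\in\aut(S)$, and since $\gamma$ may move the generators within their basic sets you cannot conclude that $\varphi$ fixes them, so $f^S$ is not pinned down. This is precisely why Lemma~\ref{cinormal} carries a normality hypothesis (either $S_\r\lhd\aut(\A)^S$ or $\A_S$ normal), and why the bulk of the paper's proof consists of \emph{establishing} such normality statements --- e.g.\ $(H_q)_\r\lhd\aut(\A)$ from solvability of $\aut(\A)^{H_q}$, and $(H/\langle a\rangle)_\r\lhd\aut(\A)^{H/\langle a\rangle}$ from the structure of the kernel $L=\aut(\A)_{H/H_p}$, which acts faithfully and equivalently on each $H_p$-coset. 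Neither of Lemma~\ref{cinormal}'s hypotheses is verified in your use of it ($\A_{H/V}$ over $C_p\times C_q$ need not be normal, and assuming $\A$ is CI would be circular). Second, your key claim under hypothesis (2) --- that decomposability of $\B=V(L^{H_p},H_p)$ rules out ``diagonal'' basic sets, so that every basic set outside $VH_q$ has $V\le\rad(X)$ and $\A$ is the generalized wreath product over $VH_q/V$ --- is left unproven in your text (``I would use it to force $V\le\rad(X)$ directly'') and is not how the configuration behaves. The paper's case analysis on the isomorphism type of $\B$ (rows 2--5 of Table~1) shows that when $\B\cong\Z C_p\wr\Z C_p^2$ or $(\Z C_p\wr\Z C_p)\otimes\Z C_p$, exterior basic sets are only guaranteed to be unions of $\langle a\rangle$-cosets, so sets coupling the $V/\langle a\rangle$-direction with $H_q$ may occur; they are handled not by non-existence but by proving $f^{H/\langle a\rangle}=\id$ via condition (1) of Lemma~\ref{cinormal}. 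Likewise, diagonal sets \emph{inside} $VH_q$ (coupling $\langle a\rangle$ with $H_q$, i.e.\ with $\pr_{H_p}(X)=b^j\langle a\rangle$ but $\langle a\rangle\not\le\rad(X)$) are not addressed by your interior argument at all, which only reaches sets that are unions of $\langle a\rangle$-cosets. So the proposal is missing the actual content of the lemma: the kernel analysis and the derived normality statements that make Lemma~\ref{cinormal} applicable.
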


\begin{proof}
Let $A_1=\langle a \rangle$, $A_2=\langle a, b \rangle$ and 
$A_3=\langle a, b, c \rangle$. Then each $A_i$ is an 
$\A$-subgroup, and $A_3=H_p$. Choose an arbitrary basic set $X \in \cS(\A)$. 

It is easy to check that $X^f=X$ holds whenever 
$X \subset H_p$.  Suppose that $X \subset H_q$. 
If $\A_{H_q}$ is of rank $2$, then $X^f=X$ holds trivially. If 
$\A_{H_q}$ is of rank larger than $2$, then $\A_{H_q}$ is normal 
by \cite[Theorem~4.1]{EP2}. Since $\A_{H_q}$ is a $\CI$-S-ring, see Remark~\ref{cisrings},  
and $d^f=d$, Lemma~\ref{cinormal}(ii) can be applied to $\A$, 
$H_q$ and $f$, and this results in 
$f^{H_q}=\id_{H_q}$, and so $X^f=X$.  
Notice that, all these yield that, for every 
$i \in \{1, 2, 3\}$, 
\begin{equation}\label{eq:imp}
\A_{A_i H_q}=\A_{A_i} \otimes \A_{H_q} \implies 
X^f=X~\text{if}~X \subset A_i H_q.
\end{equation}

Let $K=\aut(\A)$, $L=K_{H/H_p}$ and $M=K_{H/H_q}$ (for the definition of the latter 
two groups, see Eq.~\eqref{eq:kernel}). 

Let $x  \in H_q$. Then for any $\gamma \in L$, 
$e^\gamma=e$ if and only if $x^\gamma=x$.  Thus, $L_e=L_x$, 
and so $L$ acts equivalently on $H_p$ and $H_px$. This implies that 
$L$ acts faithfully on $H_px$, in particular, $L \cong L^{H_px}$. 
One obtains in exactly the same way that, for every $y \in H_p$,  
$M$ acts equivalently on $H_q$ and 
$H_qy$ and $M \cong M^{H_qy}$.

Assume first that condition~(1) holds, that is, 
$K^{H_q}$ is non-solvable. Then $K^{H_q}$ acts $2$-transitively on 
$H_q$.
The group $(H_q)_\r \le M^{H_q}$. 
If $M^{H_q}$ is solvable, then $(H_q)_\r$ is characteristic in 
$M^{H_q}$, and as $M^{H_q} \lhd K^{H_q}$, it follows that 
$(H_q)_\r \lhd K^{H_q}$. This, however, contradicts that 
$K^{H_q}$ is non-solvable, and thus $M^{H_q}$ is a 
non-solvable group. This implies that $M$ acts $2$-transitively 
on every coset ${H_q}x$, $x \in H_p$. Thus, $H_qx \setminus \{x\}$ is an orbit under $M_x=M_e$, showing that $\A=\A_{H_p} \otimes 
\A_{H_q}$. 
Then we are done by Eq.~\eqref{eq:imp}.

In the rest of the proof we assume that $K^{H_q}$ is a solvable group. 
Notice that,  since $M \cong M^{H_q}$, it follows in turn that, $M$ is solvable,  $(H_q)_r$ is characteristic in $M$, and as 
$M \lhd K$, we have $(H_q)_r \lhd K$.

To settle the lemma, we have to show that $X^f=X$ for each 
$X \in \cS(\A)$ provided that condition~(2) holds, that is, 
$V(L^{H_p},H_p)$ is decomposable. 
Let $\B=V(L^{H_p},H_p)$.
Since $\A_{H_p} \cong \Z C_p \wr  \Z C_p \wr  \Z C_p$, 
$\aut(\A_{H_p})$ is a $p$-group, and hence so is $L^{H_p}$. 
We find that $\B$ is isomorphic to one of the S-rings in rows nos.~2--5 in Table~1. We consider below all the possibilities step by step.
Recall that, $X$ is an arbitrary basic set of $\A$.
\medskip

\noindent{\bf Case~1.} $\B \cong \Z C_p^2 \wr \Z C_p$.
\medskip

In this case $\rad(X) \ge A_2$ if  
$X \not\subset A_2H_q$, and so $X^f=X$ for such $X$.
The S-ring $\B_{A_2}=\Z A_2$, hence 
$L^{A_2}=(A_2)_\r$. 
Repeating the argument used by showing that 
$L \cong L^{H_p}$, 
one can prove that $L^{A_2H_q} \cong L^{A_2}$.  This then yields 
$((A_2)_r)^{A_2H_q}=L^{A_2H_q} \lhd K^{A_2H_q}$. On the other hand, $(H_q)_r \lhd K$ also holds, and thus we find that 
$(A_2H_q)_\r \lhd K^{A_2H_q}$. By Remark~\ref{cisrings}, 
$\A$ is a $\CI$-S-ring, and all conditions in  
Lemma~\ref{cinormal}(i) hold for $\A, A_2H_q, f$ and 
the generating set $\{a,b,d\}$.   
We find $f^{A_2H_q}=\id_{A_2H_q}$, and so $X^f=X$ for 
$X \subset A_2H_q$ as well.
\medskip

\noindent{\bf Case~2.} $\B \cong \Z C_p \wr \Z C_p^2$.
\medskip

In this case $\rad(X) \ge A_1$ if  
$X \not\subset A_1Q$. Thus, $\A_{A_2H_q}=\A_{A_2} \otimes \A_{H_q}$. 
By Eq.~\eqref{eq:imp}, $X^f=X$ if $X \subset A_2H_q$. 
The S-ring $\B_{H_p/A_1}=\Z (H_p/A_1)$, hence 
$L^{H_p/A_1}=(H_p/A_1)_{\rm right}$.
It can be shown that $L^{H/A_1} \cong L^{H_p/A_1}$.
This then yields 
$((H_p/A_1)_r)^{H/A_1}=L^{H/A_1} \lhd K^{H/A_1}$, 
and since $Q_r \lhd K$ also holds, we obtain that 
$(H/A_1)_\r \lhd K^{H/A_1}$. 
Lemma~\ref{cinormal}(i) can be applied to 
$\A$, $H/A_1$, $f$ and the generating set 
$\{A_1b, A_1c, A_1d\}$. We find 
$f^{H/A_1}=\id_{H/A_1}$, and so $X^f=X$ for 
$X \not\subset A_2H_q$ as well.
\medskip

\noindent{\bf Case~3.} $\B \cong (\Z C_p \wr \Z C_p) \otimes \Z C_p$.
\medskip

In this case $|\O_\theta(\B)|=p^2$. 
Assume for the moment that $\O_\theta(\B) \ne A_2$, and let 
$\{u\} \in \cS(\B)$ for some $u \notin A_2$. Choose 
$X \in \cS(\A)$ so that $\pr_{H_p}(x)=u$ for some 
$x \in X$ (recall that, $\pr_{H_p}$ denotes the projection from 
$H=H_p \times H_q$ to $H_p$). 
Then $H_px \cap X=\{x\}$. For otherwise, there is 
$\gamma \in K_e$ such that $x^\gamma \ne x$ and 
$x^\gamma \in H_px$. Since $K^{H_q}$ is solvable, this implies in turn that, $\gamma \in L_e$, 
$u^\gamma \ne u$ and $u^\gamma \in H_p$. This contradicts that 
$\{u\} \in \cS(\B)$. We obtain that every $H_p$-coset intersects 
$X$ in at most one element. 
Let $v \in A_2u$ be an arbitrary element.
By Lemma~\ref{nonpower6}, 
$\pr_{H_p}(X)=A_2u$, $\pr_{H_p}(y)=v$ for some $y \in X$. 
If $v \notin \O_\theta(\B)$, then there exists  
$\gamma \in L_e$ such that $v^\gamma \ne v$. But then 
$y \ne y^\gamma \in X^\gamma=X$, contradicting 
that $H_py \cap X=\{y\}$. We obtain that $A_2u \subset 
\O_\theta(\B)$, so $\B=\Z H_p$, a contradiction. 

Then $\O_\theta(\B)=A_2$, hence $\B_{A_2}=\Z A_2$. 
One can repeat the argument used by the 1st case and derive 
that $X^f=X$ if $X \subset A_2H_q$. 

Suppose that $X \not\subset A_2H_q$.
Each basic set of $\B$ outside $A_2$ is equal to an $A^*$-coset 
for some subgroup $A^* < A_2$ of order $p$, in particular, 
$A^* \le \rad(X)$. If $A^* \ne A_1$, 
then $A_2 \le \rad(X)$ because $A_1$ is the unique $\A$-subgroup 
of order $p$, and it follows from this that 
$X^f=X$. If $A^*=A_1$, then $\B_{H_p/A_1}=\Z (H_p/A_1)$. 
In this case one can procced as by the 2nd case and derive 
that $X^f=X$. 
\medskip

\noindent{\bf Case~4.} $\B \cong \Z C_p \wr \Z C_p \wr \Z C_p$. 
\medskip

In this case $\A=\A_{H_p} \otimes \A_{H_q}$, hence 
we are done by Eq.~\eqref{eq:imp}. 
\end{proof}

We are ready to prove the main result of this paper.

\begin{proof}[Proof of Theorem~\ref{main2}.]
Let $\A=V(K, G)$ be the S-ring given in Theorem~\ref{main2}.
Due to Lemma~\ref{step1} we may assume that 
$$
\A= \A_{\bar{P}Q}\, \wr_S\, \A_Q,
$$
where $S=\bar{P}Q/Q$, $\bar{P}$ is the maximal $\A$-sugroup contained in $P$ and $\bar{P} < P$.
It follows from Lemmas~\ref{step2} and \ref{step3}
that we may further assume that 
$\A_{G/Q}$ is decomposable,  
$\A_S \cong \Z_p C_p \wr \Z_p C_p \wr \Z_p C_p$, and 
$\A_{G/Q}$ has a basic set outside $S$ with 
trivial radical. By the latter conditions Lemma~\ref{p3exceptcase} 
can be applied to $\A_{G/Q}$ and $S$.

If $\aut_{G/Q}(\A_{G/Q})^S=\aut_S(\A_S)$, then $\A$ is a $\CI$-S-ring by Lemma~\ref{cigwr}, and thus we may assume that 
$\aut_{G/Q}(\A_{G/Q})^S \ne \aut_S(\A_S)$. 

Now, according to Lemma~\ref{p3exceptcase}, $p>2$ and 
there exist $\A$-subgroups $U_1$ and $U_2$ of $G$ of 
order $p^3q$ such that $U_1, U_2 \ne P_1Q$, 
\begin{equation}\label{eq:U13}
\A_{U_1/Q}\cong \D_p~\text{and}~
\A_{U_2/Q} \cong \Z C_p \wr \Z C_p^2.
\end{equation}
Furthermore, 
each basic set of $\A_{G/Q}$ outside $U_1/Q \cup U_2/Q$ is a $V$-coset, where $V$ is the unique 
$\A_{G/Q}$-subgroup of $G$ of $p^2$ contained in $S$. 

Choose an arbitrary isomorphism $f \in \iso_e(\A)$. 
We complete the proof of the theorem by 
finding an automorphism $\varphi \in \aut(G)$ such that
\begin{equation}\label{eq:XfeqXphi}
X^{f \varphi^{-1}}=X~\text{for each}~X \in \cS(\A).
\end{equation}
Indeed, then $f \in \aut(\A)_e\aut(G)$, which 
implies $\iso_e(\A)=\aut(\A)_e\aut(G)$, and so $\A$ is a 
$\CI$-S-ring by Lemma~\ref{trans}. 

Fix elements $a_1, \ldots, a_6$ of $G$ such that 
$(a_1, a_2, a_3)$ is an $\A_{\bar{P}}$-basis,  
$$
a_4 \in (P \cap U_1) \setminus \bar{P},~  
a_5 \in (P \cap U_2) \setminus \bar{P}~\text{and}~ 
Q=\langle a_6 \rangle.
$$ 
Then $\langle a_6^f \rangle=Q$ and 
$\langle a_1^f, a_2^f, a_3^f \rangle \le P$ of order $p^3$. 
Let $P_1=\langle a_1 \rangle$, $P_2=\langle a_1, a_2 \rangle$ 
and $U=\bar{P}Q$. 

Notice that, both $\bar{P}$ and $Q$ are $\A_U$-subgroups and $\A_{\bar{P}} \cong \Z C_p \wr \Z C_p \wr \Z C_p$. 
Thus $\A_U$ satisfies the first three conditions in Lemma~\ref{technical}. The rest of the proof is divided into two cases 
depending whether also at least one of conditions (1) and (2) 
in Lemma~\ref{technical} holds.
\medskip

\noindent{\bf Case 1.}\ $\aut(\A_U)^Q$ is non-solvable or 
$V((\aut(\A_U)_{U/\bar{P}})^{\bar{P}},\bar{P})$ 
is decomposable.
\medskip

It is easy to see that there exists an automorphis $\varphi \in \aut(G)$ such that 
$$
a_i^\varphi=a_i^f~\text{for each}~i \in \{1,2,6\},~Qa_4^\varphi=Qa_4^f~\text{and}~Qa_5^\varphi=Qa_5^f. 
$$
We claim that Eq.~\eqref{eq:XfeqXphi} holds for $f_1:=f \varphi^{-1}$.

Let $X \in \cS(\A)$ such that 
$X \not\subset U \cup U_1 \cup U_2$. Notice that $X$ is a 
$P_2Q$-coset. This implies that $\A_{G/P_2Q} \cong \Z C_p^2$, 
in particular, $\A_{G/P_2Q}$ is a normal $\CI$-S-ring. 
Then, Lemma~\ref{cinormal}(ii) can 
be applied to $\A$, $G/P_2Q$, $f_1$ and the generating set 
$\{P_2Qa_4, P_2Qa_5\}$. We obtain that 
$f_1^{G/P_2 Q}=\id_{P_2Q}$, and so $X^{f_1}=X$.

Let $X \in \cS(\A)$ such that $X \subset U_1 \setminus U$. 
By Remark~\ref{cisrings} and Eq.~\eqref{eq:U13}, $\A_{U_1/Q}$ is a normal $\CI$-S-ring. Lemma
~\ref{cinormal}(ii) can be applied to $\A$, $U_1/Q$, $f_1$ and the 
generating set $\{Qa_1, Qa_2$, $Qa_4\}$. Then,  
$f_1^{U_1/Q}=\id_{U_1/Q}$. Using also that 
$\A$ is a $U/Q$-wreath product, we obtain that $X^{f_1}=X$.

Let $X \in \cS(\A)$ such that $X \subset U_2 \setminus U$. 
By Eq.~\eqref{eq:U13}, $\A_{U_2/P_1Q} \cong \Z C_p^2$, and hence 
Lemma~\ref{cinormal}(ii) can be applied to the $\A$, $U_2/P_1Q$, 
$f_1$ and the generating set $\{P_1Qa_2, P_1Qa_5\}$. 
Then, $f_1^{U_2/P_1Q}=\id_{U_2/P_1Q}$, and using also that 
$\A_{U_2}$ is a $(U \cap U_2)/P_1Q$-wreath product, we obtain 
that $X^{f_1}=X$.

Finally, let $X \in \cS(\A)$ such that $X \subset U$. 
The S-ring $\A_{G/P_1Q} \cong (\Z C_p \wr \Z C_p) \otimes \Z C_p$ 
with $\O_\theta(\A_{G/P_1Q})=U_2/P_1Q$.
It has been shown above that 
$$
f_1^{U_1/P_1Q}=
\id_{U_1/P_1Q}~\text{and}~f_1^{U_2/P_1Q}=\id_{U_2/P_1Q}.
$$ 

An arbitrary element of $G/P_1Q$ is in the form 
$P_1Qxy$, where $x \in U_1$ and $y \in U_2$. 
Since $\{P_1Qy\} \in \cS(\A_{G/P_1Q})$, 
$$
(P_1Qxy)^{f_1^{G/P_1Q}}=(P_1Qx)^{f_1^{G/P_1Q}}
(P_1Qy)^{f_1^{G/P_1Q}}=P_1Q xy.
$$
Thus, $f_1^{G/P_1Q}=\id_{G/P_1Q}$, in particular,   
$a_3^{f_1} \in P_1Q a_3$. 
On the other hand, $a_3^{f_1} \in \bar{P}$, hence 
$a_3^{f_1} \in P_1a_3$.  Then all conditions in Lemma~\ref{technical} 
hold for $\A_U$ and $f_1^U$, and this results in 
$X^{f_1}=X$. 
\medskip

\noindent{\bf Case 2.}\ $\aut(\A_U)^Q$ is solvable and 
$V((\aut(\A_U)_{U/\bar{P}})^{\bar{P}},\bar{P})$ is indecomposable.
\medskip

In this case, we choose an automorphism $\varphi \in \aut(G)$ 
such that 
$$
a_i^\varphi=a_i^f~\text{for each}~i \in \{1,2,3,6\}~
\text{and}~Qa_4^\varphi=Qa_4^f. 
$$
We claim that Eq.~\eqref{eq:XfeqXphi} holds for 
$f_1:=f \varphi^{-1}$.

We show first that $\A_U$ is normal.  
Using that $\aut(\A_U)^Q$ is solvable, we have shown 
in the proof of Lemma~\ref{technical} that 
$(Q_r)^U \lhd \aut(\A_U)$. Also, since 
$V((\aut(\A_U)_{U/\bar{P}})^{\bar{P}},\bar{P})$ is indecomposable, 
$\bar{P}_\r$ is the unique regular subgroup of 
$(\aut(\A_U)_{U/\bar{P}})^{\bar{P}}$ isomorphic to $C_p^3$. 
Thus, for every $\gamma \in \aut(\A_U)$, 
$((\bar{P}_r)^U)^\gamma)^{\bar{P}}=\bar{P}_\r=
(P_r)^{\bar{P}}$. 
This implies that $((\bar{P}_r)^U)^\gamma=(\bar{P}_r)^U$ because  
$\aut(\A_U)_{U/\bar{P}}$ acts faithfully on $\bar{P}$, see the 
proof of Lemma~\ref{technical}.  
We obtain that $(\bar{P}_r)^U \lhd \aut(\A_U)$. 
Since $(Q_r)^U \lhd \aut(\A_U)$ and 
$U_\r=(\bar{P}_r)^U (Q_r)^U$, it follows that $\A_U$ is indeed normal.  

Let $L=\aut_U(\A_U)^{U/Q}$ and 
$M=\aut_{U/Q}(\A_{U/Q})$. Clearly, $L \le M$. 
Write $\bar{x}$ for the element 
$Qx \in U/Q$. The S-ring 
$\A_{U/Q} \cong \Z C_p \wr \Z C_p \wr \Z C_p$ and 
$\{\bar{a}_1,\bar{a}_2,\bar{a}_3\}$ is an $\A_{U/Q}$-basis. 
These yield that 
$$
M=\big\{\, \phi \in \aut(U/Q):~(\bar{a}_1,\bar{a}_2,\bar{a}_3)^\phi=
(\bar{a}_1,\bar{a}_2\bar{a}_1^{\, i},\bar{a}_3\bar{a}_2^{\, j} 
\bar{a}_1^{\, k}),~i, j, k \in \{0,\ldots,p-1\}\, 
\big\}. 
$$

Since $\A_U$ is normal, 
$L \approx_{\rm Cay} M$, hence $p^2 \le |L| \le |M|=p^3$. 
If $L=M$, then $\A$ is a $\CI$-S-ring by 
Lemma~\ref{cigwr}. Let $L < M$. The center 
$Z(M)=\langle \phi_1 \rangle$, where 
$(\bar{a}_1,\bar{a}_2,\bar{a}_3)^{\phi_1}=
(\bar{a}_1,\bar{a}_2,\bar{a}_3\bar{a}_1)$. 
Notice that, since $L \approx_{\rm Cay} M$, 
$Z(M)$ is the unique subgroup of $L$ of order $p$ having the 
property that its elements fix both $\bar{a}_1$ and $\bar{a}_2$.  

Let $X \in \cS(\A)$ such that $X \subset U$.
By Remark~\ref{cisrings}, $\A_U$ is a $\CI$-S-ring, and hence 
Lemma~\ref{cinormal}(ii) can be applied to $\A$, $U$, $f_1$ 
and the generating set $\{a_1, a_2, a_3, a_6\}$. We find that 
$f_1^U=\id_U$, and so $X^f=X$.  

Let $X \in \cS(\A)$ such that $X \subset U_1 \setminus U$. 
By Remark~\ref{cisrings} and Eq.~\eqref{eq:U13}, 
the S-ring $\A_{U_1/Q}$ is a normal $\CI$-S-ring, and 
hence Lemma~\ref{cinormal}(ii) can be applied to $\A$, 
$U_1/Q$, $f_1$ and the generating set $\{\bar{a}_1, \bar{a}_2, 
\bar{a}_4\}$. We find that $f_1^{U_1/Q}=\id_{U_1/Q}$, and since 
$Q \le \rad(X)$, this implies that $X^{f_1}=X$.

Finally, let $X \in \cS(\A)$ such that $X \not\subset U_1 \cup U$. 
Then $X \subset Uy$ for some $y \in \langle a^4 \rangle$. 
Since $f_1^{U_1/Q}=\id_{U_1/Q}$, $(Qy)^{f_1}=Qy$, hence 
$Qy^{f_1}=Q^{f_1}y^{f_1}=(Qy)^{f_1}=Qy$, and 
$(Uy)^{f_1}=Uy^{f_1}=UQy^{f_1}=Uy$. 
Let $\delta : U \to Uy$ be the maping defined by 
$x \mapsto xy$ ($x \in U$), 
and let $\psi=\delta (f_1)^{Uy} \delta^{-1}$. 

We claim that $\psi \in \aut(\A_U)$. 
This is equivalent to say that, for every $Y \in \cS(\A_U)$ and 
$x \in U$, $(Yx)^\psi=Yx^\psi$. This follows along the line:
$$
(Yx)^\psi=(Yxy)^{f_1\delta^{-1}}=(Y^{f_1}(xy)^{f_1})^{\delta^{-1}}
=Y(xy)^{f_1}y^{-1}=Y x^\psi,
$$
where by the 2nd equality we used that $f_1 \in \iso_e(\A)$ and by the 
3rd one that $f_1^{U}=\id_U$. It is clear that 
$(U/Q)^\psi=U/Q$, hence $\psi^{U/Q} \in \aut(\A_U)^{U/Q}=
\aut_U(\A_U)^{U/Q}=L$.

Since $f_1^{U_1/Q}=\id_{U_1/Q}$, it follows that $\psi^{U/Q}$ fixes both $\bar{a}_1$ and $\bar{a}_2$, and hence $\psi^{U/Q} \in \langle \phi_1 \rangle$. Consequently, for every $x \in U$, $(P_1Qx)^\psi=
P_1Qx$. Combining this with the condition that 
$P_1Q \le \rad(X)$, we can write   
$$
X^{f_1}=(X{y^{-1}})^{\psi \delta}=
((P_1QXy^{-1})^\psi)^\delta=(P_1QXy^{-1})^\delta=P_1QX=X.
$$
This completes the proof of Case~2.
\end{proof}

\section*{Acknowledgement}
The authors would like to thank M.\ Muzychuk and  G.\ Somlai for  fruitful discussions on the subject.

\end{document}